\renewcommand{\H}{\mathbb{H}}
\newcommand{\sgn}{\mbox{sgn}}
\newcommand{\SL}{\mathrm{SL}}
\newcommand{\N}{\mathbb N}
\newcommand{\C}{\mathbb C}
\newcommand{\Q}{\mathbb Q}
\theoremstyle{plain}
\newtheorem{thm}{Theorem}[section]
\newtheorem{cor}[thm]{Corollary}
\newtheorem{lem}[thm]{Lemma}
\newtheorem*{rem}{Remark}
\theoremstyle{definition}
\numberwithin{equation}{section}
\renewcommand{\sgn}{\textnormal{sgn}}
\def\z{\zeta}
\def\vth{\vartheta}
\def\t{\tau}
\def\z{\zeta}
\def\vth{\vartheta}
\def\t{\tau}
\newcommand{\re}{{\rm Re}}
\newcommand{\im}{{\rm Im}}
\renewcommand{\sgn}{{\rm sgn}}
\newcommand{\R}{\mathbb R}
\newcommand{\Z}{\mathbb Z}
\setlist[itemize]{noitemsep, topsep=0pt}
\newcommand{\vast}{\bBigg@{2}}
\newcommand{\Vast}{\bBigg@{5}}
\renewcommand{\pmod}[1]{\ \left( \mathrm{mod} \, #1 \right)}
\newcommand{\Pmod}[1]{\ ( \mathrm{mod} \, #1 )}
\newcommand{\pent}[1]{P_5\left(#1\right)}
\title{Periodic sign changes for weakly holomorphic $\eta$-quotients}
\author{Kathrin Bringmann}
\address{University of Cologne, Department of Mathematics and Computer Science, Weyertal 86-90, 50931 Cologne, Germany}
\email{kbringma@math.uni-koeln.de}
\author{Guoniu Han}
\address{I.R.M.A., UMR 7501, Universit\'e de Strasbourg et CNRS, 7 rue
Ren\'e Descartes, F-67084 Strasbourg, France}
\email{guoniu.han@unistra.fr}
\author{Bernhard Heim}
\author{Ben Kane}
\address{The University of Hong Kong, Department of Mathematics, Pokfulam, Hong Kong}
\email{bkane@hku.hk}
	\date{\today}
	\subjclass[2020]{11F11, 11F20, 11F30, 11F37}
	\keywords{Circle Method, $\eta$-quotients, exact formulas, Kloosterman sums, modular forms, sign changes}
\begin{document}
	\begin{abstract}
		In this paper, we study sign changes of weakly holomorphic modular forms which are given as $\eta$-quotients. We give representative examples for forms of negative weight, weight zero, and positive weight.
	\end{abstract}
	\maketitle
	\section{Introduction and statement of results}

Let $m\in\N$ and $\delta_{\ell}\in \Z$ for $1\leq \ell\leq m$.
 We define
	\[
		\prod_{\ell=1}^{m} \left(q^\ell;q^\ell\right)_{\infty}^{\delta_\ell}=:\sum_{n\ge0}C_{1^{\delta_1} 2^{\delta_2}\cdots m^{\delta_m}}(n)q^n,
	\]
	where $(a;q)_{n}:=\prod_{j=0}^{n-1}(1-aq^{j})$ for $n\in\N_0\cup\{\infty\}$ is the {\it$q$-Pochhammer symbol}. The signs
	\[
		s_{1^{\delta_1} 2^{\delta_2}\cdots m^{\delta_m}}(n):=\sgn\left(C_{1^{\delta_1} 2^{\delta_2}\cdots m^{\delta_m}}(n)\right)
	\]
were investigated by a number of authors. For example, letting $M(a,c;n)$ denote the number of partitions of $n$ with crank $\equiv a\pmod{c}$, Andrews--Lewis \cite[Conjecture 2]{AndrewsLewis} conjectured that\footnote{We have $M(0,3;3n+2)=M(1,3;3n+2)$ for $n\in\{4,5\}$ and
	$M(0,3;5)>M(1,3;5)$.}
\begin{align*}
M(0,3;3n)&>M(1,3;3n)\text{ for $n\in\N$},\\
M(0,3;3n+1)&<M(1,3;3n+1)\text{ for $n\in\N$},\\
M(0,3;3n+2)&<M(1,3;3n+2)\text{ for }n\in\N\setminus\{1,4,5\}.%
\end{align*}
Since $M(0,3;n)-M(1,3;n)=C_{1^23^{-1}}(n)$, this conjecture can be repackaged as
\[
s_{1^23^{-1}}(n)=\begin{cases} 1&\text{if }3\mid n\text{ or }n=5,\\ 0&\text{if }n\in\{14,17\},\\ -1&\text{otherwise}.\end{cases}
\]
This conjecture was proven by Kane \cite[Corollary 2]{KaneResolution}. To give another example, Andrews \cite[Theorem 2.1]{AndrewsBorwein} proved that for a prime $p$ the signs $s_{1^1p^{-1}}(n)$ of the coefficients of the infinite Borwein products $\frac{(q;q)_{\infty}}{(q^p;q^p)_{\infty}}$ are periodic in $n$ with period $p$. In this paper, we investigate other cases where $s_{1^{\delta_1} 2^{\delta_2}\cdots m^{\delta_m}}(n)$ is periodic with some period $M\in\N$ (i.e., $s_{1^{\delta_1} 2^{\delta_2}\cdots m^{\delta_m}}(n)$ only depends on $n$ (mod $M$)). Techniques used to show these results vary depending on the sign of $\sum_{\ell=1}^{m}\delta_\ell$, so we give representative cases for each of the possibilities for the sign.  We first consider a case with $\sum_{\ell=1}^{m}\delta_\ell<0$ and $M=5$.
	\begin{thm}\label{T:Nw}
		We have
		\[
		s_{1^15^{-2}}(n)=
		\begin{cases}
			1&\text{if }n\equiv 0\pmod{5},\\
			-1&\text{if }n\equiv 1,2\pmod{5},\\
			0&\text{if }n\equiv 3,4\pmod{5}.
		\end{cases}
		\]
	\end{thm}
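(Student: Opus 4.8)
The plan is to factor the generating product so as to isolate an arithmetically rigid factor (which forces the zeros) from a manifestly positive one (which pins down the remaining signs). I would write
\[
\frac{(q;q)_\infty}{(q^5;q^5)_\infty^2}=\frac{(q;q)_\infty}{(q^5;q^5)_\infty}\cdot\frac{1}{(q^5;q^5)_\infty},
\]
and observe that $\frac{1}{(q^5;q^5)_\infty}=\sum_{m\ge0}p(m)q^{5m}$, with $p(m)$ the partition function, has strictly positive coefficients supported on multiples of $5$. Passing to coefficients,
\[
C_{1^15^{-2}}(n)=\sum_{m\ge0}C_{1^15^{-1}}(n-5m)\,p(m),
\]
so the whole problem reduces to the sign behaviour of the Borwein-type product $\frac{(q;q)_\infty}{(q^5;q^5)_\infty}$ together with the positivity of the $p(m)$.

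For $n\equiv3,4\pmod{5}$ I would argue directly from Euler's pentagonal number theorem $(q;q)_\infty=\sum_{k\in\Z}(-1)^kq^{k(3k-1)/2}$. Setting $g_k:=k(3k-1)/2$, one checks $g_{k+5}-g_k=15k+35\equiv0\pmod{5}$, so $g_k\bmod 5$ depends only on $k\bmod 5$ and runs through $0,1,0,2,2$ for $k\equiv0,1,2,3,4$; in particular no generalized pentagonal number is $\equiv3,4\pmod{5}$. Since $\frac{1}{(q^5;q^5)_\infty^2}$ is supported on multiples of $5$, the displayed convolution is an empty sum for $n\equiv3,4$, giving $C_{1^15^{-2}}(n)=0$ (the same remark shows $C_{1^15^{-1}}(n)=0$ there).

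For $n\equiv0,1,2$ I would feed in the sign pattern of $\frac{(q;q)_\infty}{(q^5;q^5)_\infty}$ furnished by Andrews' theorem at $p=5$. Only the weak form is needed: $C_{1^15^{-1}}(j)\ge0$ for $j\equiv0$ and $C_{1^15^{-1}}(j)\le0$ for $j\equiv1,2\pmod{5}$, together with the base values $C_{1^15^{-1}}(0)=1$ and $C_{1^15^{-1}}(1)=C_{1^15^{-1}}(2)=-1$. Inserting this into the convolution and using $p(m)>0$: for $n\equiv0$ every summand is $\ge0$ while the term $m=n/5$ equals $p(n/5)>0$, so $C_{1^15^{-2}}(n)>0$; for $n\equiv1$ (resp. $n\equiv2$) every summand is $\le0$ while the term $m=(n-1)/5$ (resp. $m=(n-2)/5$) equals $-p(\,\cdot\,)<0$, so $C_{1^15^{-2}}(n)<0$. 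This settles the three nonzero cases uniformly in $n$, with no ad hoc treatment of small $n$.

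The main obstacle is precisely the input I borrow: the sign pattern of the Borwein product $\frac{(q;q)_\infty}{(q^5;q^5)_\infty}$. The positivity bootstrap above is elementary once the two one-sided inequalities are in hand, but those inequalities are the genuinely arithmetic content; I would either cite Andrews directly or re-derive the two inequalities actually used, which is where the circle method enters, through a Rademacher-type exact formula for $C_{1^15^{-1}}(n)$ whose leading Bessel term has constant sign on each residue class and whose Kloosterman-sum tail is controlled. One caveat to record is that ``periodicity'' must be read weakly here: genuine interior zeros occur (for instance $C_{1^15^{-1}}(7)=0$ although $C_{1^15^{-1}}(2)=-1$), so only the one-sided inequalities, and not strict periodicity of the sign, are available — but these are exactly what the bootstrap consumes.
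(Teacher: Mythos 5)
Your argument is correct, but for the classes $n\equiv 0,1,2\pmod{5}$ it takes a genuinely different route from the paper. The treatment of $n\equiv 3,4\pmod{5}$ coincides with the paper's: generalized pentagonal numbers only hit $0,1,2$ modulo $5$, and multiplication by a series supported on multiples of $5$ preserves this. For the remaining classes the paper does not invoke Andrews at all: it applies Zuckerman's exact formula to the weight $-\frac12$ weakly holomorphic form $q^{-3/8}(q;q)_\infty/(q^5;q^5)_\infty^2$, identifies the $k=5$ term as a main term whose sign is periodic, bounds the Kloosterman--Bessel tail explicitly, and verifies $n<33$ by computer. Your convolution bootstrap --- $C_{1^15^{-2}}(n)=\sum_{m\ge 0}C_{1^15^{-1}}(n-5m)\,p(m)$, the one-sided inequalities $C_{1^15^{-1}}(j)\ge 0$ for $j\equiv 0$ and $\le 0$ for $j\equiv 1,2\pmod 5$ imported from Andrews (in the form recorded by Wang, and acknowledged in the paper's own remark following the theorem), and strictness forced by the extreme term with $n-5m\in\{0,1,2\}$ together with the base values $C_{1^15^{-1}}(0)=1$, $C_{1^15^{-1}}(1)=C_{1^15^{-1}}(2)=-1$ --- is precisely the ``straightforward argument'' the authors allude to, and your extreme-term device cleanly supplies the non-vanishing that their remark flags as the delicate point, uniformly in $n$ and with no computer check. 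What you trade away is self-containedness and generality: the real arithmetic content sits in Andrews's theorem (the inequalities are not elementary; e.g.\ the class $n\equiv 0$ receives negative pentagonal contributions such as $-q^{35}$, so one-sidedness genuinely needs his dissection argument), and your method has no analogue for the weight-zero and positive-weight cases treated later in the paper, for which no Borwein-type sign theorem is available; the authors' circle-method proof of this first case is deliberately a template for those, and it additionally produces an exact formula with effective error bounds. Provided you cite Andrews/Wang for the two inequalities rather than promise to re-derive them, your proof is complete, and for this particular theorem it is the shorter argument.
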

\begin{rem}
As noted by Wang \cite[(1.4)]{Wang}, Andrews's proof of \cite[Theorem 2.1]{AndrewsBorwein} implies that the signs $s_{1^15^{-1}}(n)$ of the Fourier coefficients of $\frac{(q;q)_{\infty}}{(q^5;q^5)_{\infty}}$ are periodic with period $5$, up to some exceptional $n$ satisfying $s_{1^15^{-1}}(n)=0$. Using this fact, a straightforward argument shows that the signs of $s_{1^15^{-2}}(n)$ of the Fourier coefficients of $\frac{(q;q)_{\infty}}{(q^5;q^5)_{\infty}^2}=\frac{(q;q)_{\infty}}{(q^5;q^5)_{\infty}}\frac{1}{(q^5;q^5)_{\infty}}$ satisfy the same property. Proving that none of the coefficients in the congruence classes $n\equiv 0,1,2\pmod{5}$ vanish requires a slightly more delicate analysis, however. Since we are only interested in purely periodic signs in this paper, we still investigate this case to demonstrate how to use the methods in this paper.
\end{rem}
\Cref{T:Nw} also has a combinatorial interpretation. To state it, for $n\in\N$, let $p_2(n)$ be the number of $2$-colored partitions of $n$, setting $p_2(x):=0$ for $x\notin\N_0$, and for $m\in\Z$ let $\pent{m}:=\frac{3m^2-m}{2}$ be the $m$-th generalized pentagonal number.
\begin{cor}\label{cor:2-color}
For $n\in\N$, we have
\[
\sum_{m\in\Z} (-1)^m p_2\left(\frac{n-\pent{m}}{5}\right)\begin{cases} >0&\text{if }n\equiv 0\pmod{5},\\
<0&\text{if }n\equiv 1,2\pmod{5},\\
=0&\text{if }n\equiv 3,4\pmod{5}.
\end{cases}
\]
\end{cor}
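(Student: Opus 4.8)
The plan is to show that the alternating sum in the statement is exactly the Fourier coefficient $C_{1^15^{-2}}(n)$; once this identification is made, the corollary follows immediately from \Cref{T:Nw}, because the three cases there ($n\equiv0$, $n\equiv1,2$, and $n\equiv3,4\pmod5$) translate verbatim into the three sign conditions asserted here. So the entire task reduces to a generating-function computation, and I would begin by factoring
\[
\frac{(q;q)_\infty}{(q^5;q^5)_\infty^2}=(q;q)_\infty\cdot\frac{1}{(q^5;q^5)_\infty^2}
\]
and expanding each factor as a $q$-series.

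For the numerator I would invoke Euler's pentagonal number theorem,
\[
(q;q)_\infty=\sum_{m\in\Z}(-1)^mq^{\pent{m}},
\]
and for the denominator I would use that $\frac{1}{(q;q)_\infty^2}=\sum_{k\ge0}p_2(k)q^{k}$ is the generating function for $2$-colored partitions; substituting $q\mapsto q^5$ then gives
\[
\frac{1}{(q^5;q^5)_\infty^2}=\sum_{k\ge0}p_2(k)q^{5k}.
\]
Forming the Cauchy product of these two series and reading off the coefficient of $q^n$, a term $q^{\pent{m}}\cdot q^{5k}$ contributes precisely when $5k=n-\pent{m}$, i.e.\ $k=\frac{n-\pent{m}}{5}$, with sign $(-1)^m$ and weight $p_2(k)$. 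Summing over all $m\in\Z$ and using the convention $p_2(x)=0$ for $x\notin\N_0$ (which automatically discards the terms in which $\frac{n-\pent{m}}{5}$ fails to be a nonnegative integer) yields
\[
C_{1^15^{-2}}(n)=\sum_{m\in\Z}(-1)^mp_2\!\left(\frac{n-\pent{m}}{5}\right).
\]
Since $s_{1^15^{-2}}(n)=\sgn(C_{1^15^{-2}}(n))$, applying \Cref{T:Nw} completes the argument.

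I do not expect a genuine obstacle here: the proof is a routine manipulation of classical generating functions combined with the already-established \Cref{T:Nw}. The only points that require any care are the bookkeeping in the Cauchy product and the role of the convention $p_2(x)=0$ off the nonnegative integers, which together guarantee that for each fixed $n$ only the finitely many admissible values of $m$ contribute, so that the displayed identity for $C_{1^15^{-2}}(n)$ is an honest finite sum.
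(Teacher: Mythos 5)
Your proposal is correct and follows essentially the same route as the paper: expand $(q;q)_\infty$ by the pentagonal number theorem, expand $\frac{1}{(q^5;q^5)_\infty^2}$ as the generating function $\sum_{j\geq 0}p_2(j)q^{5j}$ of $2$-colored partitions, identify the Cauchy-product coefficient of $q^n$ with the alternating sum, and conclude by \Cref{T:Nw}. No gaps.
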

\rm

	We next treat a case with $\sum_{\ell=1}^m\delta_\ell=0$ and $M=4$.
	\begin{thm}\label{T:0w}
		We have
		\[
		s_{1^12^24^{-3}}(n)=\begin{cases}
			1&\text{if }n\equiv 0,3\pmod{4},\\
			-1&\text{if }n\equiv 1,2\pmod{4}.
		\end{cases}
		\]
	\end{thm}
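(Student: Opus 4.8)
The plan is to first collapse the $\eta$-quotient into a cleaner infinite product, and then prove the sign pattern by a sign-twisting reduction to a single positivity statement. Writing $f(q):=\sum_{n\ge0}C_{1^12^24^{-3}}(n)q^n$, I would begin by distributing each factor over residues modulo $4$, using $(q;q)_\infty=(q;q^2)_\infty(q^2;q^4)_\infty(q^4;q^4)_\infty$ and $(q^2;q^2)_\infty=(q^2;q^4)_\infty(q^4;q^4)_\infty$, and cancelling. This gives
\[
f(q)=\frac{(q;q)_\infty (q^2;q^2)_\infty^2}{(q^4;q^4)_\infty^3}=(q;q^2)_\infty\,(q^2;q^4)_\infty^3,
\]
a weight-$0$ modular unit. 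The key observation is that the target sign $\varepsilon(n)$ (which is $+1$ for $n\equiv0,3$ and $-1$ for $n\equiv1,2\pmod4$) equals $\IRe[(1+i)i^n]$. Hence the theorem is \emph{equivalent} to the claim that the twisted series $\widetilde f(q):=\sum_n\varepsilon(n)C_{1^12^24^{-3}}(n)q^n=\IRe[(1+i)f(iq)]$ has \emph{strictly positive} coefficients, since strict positivity simultaneously forces the claimed signs and the non-vanishing of every coefficient.

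Next I would compute $f(iq)$ in closed form. Substituting $q\mapsto iq$ (so $i^2=-1$, $i^4=1$) and regrouping the factors of $(q;q)_\infty,(q^2;q^2)_\infty,(q^4;q^4)_\infty$ by residue classes modulo $4$, the powers of $(q^4;q^4)_\infty$ cancel and one is left with
\[
f(iq)=(iq;q^4)_\infty(-iq^3;q^4)_\infty\,(-q^2;q^4)_\infty^3.
\]
The last factor is real with non-negative coefficients, and the Jacobi triple product applied to the first two factors gives $(iq;q^4)_\infty(-iq^3;q^4)_\infty=(q^4;q^4)_\infty^{-1}\sum_{n\in\IZ}(-i)^nq^{2n^2-n}$. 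Taking real parts, and using that $\IRe[(1+i)(-i)^n]=+1$ for $n\equiv0,1$ and $-1$ for $n\equiv2,3\pmod4$, I obtain the explicit formula
\[
\widetilde f(q)=\frac{(-q^2;q^4)_\infty^3}{(q^4;q^4)_\infty}\sum_{n\in\IZ}(-1)^{\lfloor n/2\rfloor}q^{n(2n-1)}=:W(q)\,N(q),
\]
where $W(q)=(-q^2;q^4)_\infty^3/(q^4;q^4)_\infty$ visibly has positive coefficients supported on even exponents, and $N(q)$ is a sparse $\pm1$ theta series supported on the triangular numbers $n(2n-1)$.

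The main obstacle is the final positivity of $\widetilde f=WN$. This is a genuine cancellation phenomenon rather than a formal consequence of multiplying non-negative series, because $N$ is sign-indefinite and, as I would check via the Rademacher/Bessel asymptotics, the subtracted coefficients are of the same exponential order $e^{c\sqrt N}$ as the surviving leading terms, so no crude majorization can succeed. To organize the argument I would split according to the parity of $n$ — which decouples cleanly since $W$ is even, say $W(q)=\sum_m V_m q^{2m}$ with $V_m>0$ — reducing the theorem to showing that the sparse, block-alternating sums
\[
\widetilde f_{2N}=V_N-V_{N-3}-V_{N-5}+V_{N-14}+\cdots,\qquad
\widetilde f_{2N+1}=V_N-V_{N-1}-V_{N-7}+V_{N-10}+\cdots
\]
are strictly positive for every $N$ (the shifts are the halved triangular numbers, with signs occurring in consecutive pairs).

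This positivity is the delicate heart of the proof, and I expect it to be the step demanding real work — matching the authors' remark that ruling out vanishing coefficients in the analogous period-$5$ situation needs a more careful analysis. I would attack it by pairing the $(+,+)$ and $(-,-)$ blocks and estimating consecutive differences $V_{m}-V_{m-1}$ through explicit growth and convexity bounds for the partition-type coefficients $V_m$, so that each leading positive contribution dominates the following subtracted block, supplemented by a finite direct verification for small $N$ to cover the range where the asymptotic estimates are not yet sharp.
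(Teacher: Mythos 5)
Your reductions up to the product formula are correct and quite clean: indeed $f_2(q)=(q;q^2)_\infty(q^2;q^4)_\infty^3$, the twist $\widetilde f(q)=\sum_n\varepsilon(n)c_2(n)q^n$ equals $W(q)N(q)$ with $W(q)=(-q^2;q^4)_\infty^3/(q^4;q^4)_\infty$ and $N(q)=\sum_{j\in\Z}(-1)^{\lfloor j/2\rfloor}q^{j(2j-1)}$, and \Cref{T:0w} is equivalent to strict positivity of the coefficients of $\widetilde f$. But note that this equivalence is exactly a restatement: $\widetilde f_n=\varepsilon(n)c_2(n)$, so positivity of $\widetilde f_n$ \emph{is} the theorem, and all of the difficulty has been moved into the block-alternating sums $V_N-V_{N-3}-V_{N-5}+V_{N-14}+\cdots$ and $V_N-V_{N-1}-V_{N-7}+V_{N-10}+\cdots$, which you do not prove positive. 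This is a genuine gap, and moreover the strategy you sketch for closing it cannot work. The coefficients $V_m$ of the eta-quotient $W$ grow like $e^{c\sqrt m}$, so for any fixed shift $s$ one has $V_{N-s}/V_N\to1$; hence $V_{N-3}+V_{N-5}>V_N$ for all large $N$, and the leading positive term never dominates even the first subtracted block. The same obstruction defeats the pairing into consecutive differences: with $D_k\approx(\mathrm{gap}_k)\cdot V'(m_k)$ and gaps $3,9,15,\dots$ (resp.\ $1,3,5,\dots$) while $V'(m_k)/V'(N)\approx1$ for $k=o(\sqrt N)$, the differences \emph{increase} before they decrease, so no termwise or blockwise domination bounds the alternating sum. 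Quantitatively, the individual terms $V_{N-s_j}$ are of a strictly larger exponential order than $\widetilde f_n=|c_2(n)|$ itself, so the positivity reflects global cancellation among $\Theta(\sqrt N)$ terms; you correctly observe that ``no crude majorization can succeed,'' but the convexity/pairing scheme you then propose is precisely such a majorization.

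What is actually needed at this point is an asymptotic for $c_2(n)$ with an explicit, summable error — which is how the paper proceeds: it applies Zuckerman's exact formula (\Cref{Thm: Zuckerman}) to $F_2(\tau)=q^{-7/24}f_2(q)$ to get \Cref{lem:1^12^24^{-3}exact}, identifies the $k=4$ term as the main term $M_2(n)$ with sign $\cos\bigl(\frac\pi2\bigl(n+\frac14\bigr)\bigr)$, rewrites the remaining sums over $h$ as Kloosterman sums (\Cref{L:KloostLemma}), bounds them by Weil's bound, estimates the Bessel functions, and checks $n\le98$ by computer. If you want to keep your (genuinely different, more $q$-series-flavored) framework, you would still have to import an argument of this analytic strength — or find a combinatorial injection proving the block-alternating positivity — before the proof is complete.
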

\Cref{T:0w} has an interesting interpretation
in terms of ways to write $n$ as a sum of squares, triangular numbers $T_n:=\frac{n(n+1)}{2}$, and generalized pentagonal numbers. Namely, $C_{1^12^24^{-3}}(n)$ counts certain weighted solutions to the equation
\begin{equation}\label{eqn:formulasquaretrianglepentagonal}
n=2\sum_{j=1}^3 n_j^2 + \sum_{j=1}^{m_1} T_{a_j} + \sum_{j=1}^{m_2} T_{b_j} + \sum_{j=1}^{m_3} \pent{c_j}
\end{equation}
with $\bm{n}\in\Z^3$, $\bm{a}\in\N^{m_1}$, $\bm{b}\in\N^{m_2}$, and $\bm{c}\in(\Z\setminus\{0\})^{m_3}$ (with $m_j\in\N_0$ arbitrary). For $n\in\N$, let $\alpha(n)$ be the number of solutions $(\bm{n},\bm{a},\bm{b},\bm{c})\in \Z^3\times \N^{m_1}\times \N^{m_2}\times(\Z\setminus\{0\})^{m_3}$ to the equation \eqref{eqn:formulasquaretrianglepentagonal} weighted by $(-1)^{\sum_{j=1}^3n_j+m_1+m_2+m_3+\sum_{j=1}^{m_3}c_j}$.
\begin{cor}\label{cor:SquaresTriangularPentagonal}
We have
		\[
		\sgn\left(\alpha(n)\right)=\begin{cases}
			1&\text{if }n\equiv 0,3\pmod{4},\\
			-1&\text{if }n\equiv 1,2\pmod{4}.
		\end{cases}
		\]
\end{cor}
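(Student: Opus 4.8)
The plan is to show that the weighted counting function $\alpha(n)$ is nothing other than the $\eta$-quotient coefficient $C_{1^12^24^{-3}}(n)$, after which \Cref{T:0w} gives the claim immediately. To do this I would interpret each summand on the right-hand side of \eqref{eqn:formulasquaretrianglepentagonal} as a factor in a product of generating functions, keeping track of the prescribed sign $(-1)^{\sum_j n_j + m_1 + m_2 + m_3 + \sum_j c_j}$, which factors as a product of independent contributions from the three squares, the two families of triangular numbers, and the pentagonal numbers.

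For the three squares I would use the Jacobi triple product with $z=-1$ and $q\mapsto q^2$ to write
\[
\sum_{n\in\Z}(-1)^n q^{2n^2}=\frac{(q^2;q^2)_\infty^2}{(q^4;q^4)_\infty},
\]
so that the three integers $\bm n\in\Z^3$ weighted by $(-1)^{\sum_j n_j}$ contribute the cube of this theta function. For each of the two triangular families, the parts range over the ordered tuples $\bm a\in\N^{m_1}$ of arbitrary length, weighted by $(-1)^{m_1}$; summing the geometric series in $m_1$ turns this into $(1+\sum_{a\ge 1}q^{T_a})^{-1}$, and since adjoining the excluded term $a=0$ (i.e.\ $T_0=0$) completes the sum to $\sum_{a\ge 0}q^{T_a}=(q^2;q^2)_\infty^2/(q;q)_\infty$, each family contributes $(q;q)_\infty/(q^2;q^2)_\infty^2$. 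Likewise, the pentagonal parts $\bm c\in(\Z\setminus\{0\})^{m_3}$ weighted by $(-1)^{m_3+\sum_j c_j}$ give the geometric series $(1+\sum_{c\ne 0}(-1)^cq^{\pent c})^{-1}$, and restoring the $c=0$ term completes Euler's pentagonal number theorem $\sum_{c\in\Z}(-1)^cq^{\pent c}=(q;q)_\infty$, so the pentagonal contribution is $(q;q)_\infty^{-1}$.

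Multiplying the four pieces together, I expect
\[
\sum_{n\ge 0}\alpha(n)q^n=\left(\frac{(q^2;q^2)_\infty^2}{(q^4;q^4)_\infty}\right)^{3}\left(\frac{(q;q)_\infty}{(q^2;q^2)_\infty^2}\right)^{2}\frac{1}{(q;q)_\infty}=(q;q)_\infty(q^2;q^2)_\infty^2(q^4;q^4)_\infty^{-3},
\]
where the exponents of $(q;q)_\infty$, $(q^2;q^2)_\infty$, and $(q^4;q^4)_\infty$ collapse to $2-1=1$, $6-4=2$, and $-3$ respectively. This identifies $\alpha(n)=C_{1^12^24^{-3}}(n)$, and \Cref{T:0w} then yields the stated sign pattern.

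The step I expect to require the most care is the bookkeeping in the second paragraph: one must verify that the global sign $(-1)^{(\cdots)}$ genuinely splits as the product of the four generating-function signs, that the parts are counted as ordered tuples (so that the ``arbitrary number of parts'' really does sum to a geometric series rather than an infinite product over part sizes), and that the exclusions $a\ge 1$ and $c\ne 0$ are precisely compensated by the constant terms $T_0=0$ and $\pent 0=0$, so that the partial sums complete to the clean $\eta$-quotient identities. Everything else is a routine manipulation of these standard product formulas.
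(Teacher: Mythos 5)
Your proposal is correct and follows essentially the same route as the paper: the paper likewise factors $f_2(q)$ as $\bigl(\sum_{n\in\Z}(-1)^nq^{2n^2}\bigr)^3\bigl(\sum_{n\ge0}q^{T_n}\bigr)^{-2}\bigl(\sum_{n\in\Z}(-1)^nq^{\pent{-n}}\bigr)^{-1}$ using the standard product formulas, expands the denominators as geometric series to identify $\alpha(n)=C_{1^12^24^{-3}}(n)$, and then invokes \Cref{T:0w}. Your sign bookkeeping and the completion of the sums by the $a=0$ and $c=0$ terms are exactly the steps the paper leaves implicit, so there is nothing to add.
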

	We finally consider a case with $\sum_{\ell=1}^m\delta_\ell>0$ and $M=9$.
	\begin{thm}\label{T:Pw}
		We have
		\[
		s_{1^93^{-5}}(n)=\begin{cases}
			1&\text{if }n\equiv 0,2,5,6,8\pmod{9},\\
			-1&\text{if }n\equiv 1,3,4,7\pmod{9}.
		\end{cases}
		\]
	\end{thm}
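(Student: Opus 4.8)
The plan is to realize the generating function as (up to a rational power of $q$) a weakly holomorphic modular form and to extract its coefficients by the Circle Method, in keeping with the positive-weight philosophy $\sum_\ell\delta_\ell=4>0$; the sign in each residue class will be read off from the dominant term of a Rademacher-type expansion. Concretely, set
\[
f(\tau):=\eta(\tau)^9\eta(3\tau)^{-5}=q^{-\frac14}\sum_{n\ge 0}C_{1^93^{-5}}(n)q^{n},
\]
a form of weight $2$ on $\Gamma_0(9)$ with a nontrivial multiplier system of order $4$ at $\infty$ (one checks $f(\tau+1)=-if(\tau)$). Using the Ligozat order formula I would first compute the order of $f$ at each cusp of $\Gamma_0(9)$: in the local uniformizer $f$ has a pole of order $-\tfrac14$ at the cusps $\infty$ and $\tfrac13,\tfrac23$, and vanishes to order $+\tfrac{11}{4}$ at $0$. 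Translating to the circle $|q|=1$, the singularities of $\sum_n C_{1^93^{-5}}(n)q^n$ sit at the roots of unity $e^{2\pi i a/c}$ with $\gcd(c,9)=3$ (equivalent to $\tfrac13,\tfrac23$) and with $9\mid c$ (equivalent to $\infty$), while those with $\gcd(c,9)=1$ are zeros.

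Next I would write down the Circle-Method exact formula for $C_{1^93^{-5}}(n)$ as a convergent sum over $c$ of Kloosterman sums (built from the $\eta$-multiplier) times $I_1$-Bessel functions, grouped by cusp class. The contribution of the class $\{\tfrac13,\tfrac23\}$ runs over $c$ with $\gcd(c,9)=3$ and carries Bessel argument $\tfrac{2\pi}{c}\sqrt{n-\tfrac14}$, so its leading term ($c=3$) has rate $\tfrac{2\pi}{3}\sqrt n$; the contribution of $\{\infty\}$ runs over $9\mid c$ with leading rate $\tfrac{2\pi}{9}\sqrt n$. The holomorphic part at $\infty$ (weight $2$) contributes only polynomially and is negligible. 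Standard bounds on Kloosterman sums together with $I_1(x)\sim e^x/\sqrt{2\pi x}$ then show that, away from the boundary behaviour described below, the $c=3$ term dominates all others.

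The heart of the argument is the two-tier structure that produces period $9$ rather than $3$. For $3\nmid n$ the leading ($c=3$) Kloosterman sum is nonzero, and after evaluating the associated Gauss-type sum the $c=3$ term has sign $-1$ when $n\equiv1\pmod 3$ and $+1$ when $n\equiv2\pmod3$; since it dominates, this settles the residues $1,4,7$ (all $-$) and $2,5,8$ (all $+$) modulo $9$. The key Lemma I would prove is that the \emph{entire} contribution of the class $\{\tfrac13,\tfrac23\}$ vanishes identically when $3\mid n$, i.e.\ the relevant Kloosterman sums are $0$ for every $c$ with $\gcd(c,9)=3$ once $3\mid n$. Granting this, for $3\mid n$ the surviving dominant term is the $c=9$ term attached to $\infty$, whose Gauss sum I would evaluate explicitly to find sign $+1$ for $n\equiv0,6\pmod9$ and $-1$ for $n\equiv3\pmod9$. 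Combining the two tiers yields $+$ exactly on $\{0,2,5,6,8\}$ and $-$ on $\{1,3,4,7\}$. The numerics are consistent: $C_{1^93^{-5}}(n)$ is large for $3\nmid n$ but small for $3\mid n$ (for instance $C_{1^93^{-5}}(9)=10$, far below the $\approx e^{2\pi}$ one would expect without the vanishing), confirming that the $c=3$ \emph{and} $c=6$ contributions really do cancel at multiples of $3$.

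Finally I would make the expansion effective: bounding the tail of the Rademacher series (and the subdominant cusp) by the usual Kloosterman-sum and Bessel-function estimates yields an explicit $N_0$ beyond which $\sgn\big(C_{1^93^{-5}}(n)\big)$ equals the sign of the stated dominant term, and the finitely many $n\le N_0$ are verified by direct computation. The main obstacle is precisely the case $3\mid n$: there the primary main term is gone, so one must both establish the exact vanishing of all $\gcd(c,9)=3$ contributions and then show that the exponentially smaller $c=9$ term genuinely beats the combined error from all $c>9$ with $9\mid c$. This forces the Kloosterman/Bessel estimates to be sharp, and $N_0$ large, precisely in the regime where the cancellation has stripped away the leading asymptotics.
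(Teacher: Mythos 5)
Your outline follows essentially the same route as the paper's proof: view $f_3(q)=\frac{(q;q)_\infty^9}{(q^3;q^3)_\infty^5}$ as a weight $2$ eta-quotient, note that it decays on the arcs with $3\nmid k$ and has a one-term principal part at every $\frac hk$ with $3\mid k$, extract $I_1$-Bessel main terms with argument $\frac{\pi}{k}\sqrt{4n-1}$, observe that the $k=3$ (and $k=6$) sums are $-2\sin(\frac{2\pi n}{3})$ and $-2\sin(\frac{\pi n}{3})$ --- which settles $3\nmid n$ and vanishes for $3\mid n$, promoting the $k=9$ term to the main term --- and finish with effective bounds plus a finite computer check; your predicted signs in all nine residue classes match the paper's $\alpha_n$. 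Two caveats are worth recording. First, at weight $2$ you cannot simply ``write down the Circle-Method exact formula as a convergent sum over $c$'': absolute convergence fails in positive weight, which is precisely why the paper runs the finite Farey-dissection circle method with $N=\lfloor\sqrt n\rfloor$ and must explicitly bound three polynomially large ($O(n^{3/2})$) error sources that are invisible in your framing --- the arcs with $3\nmid k$, the non-principal parts $f_3(q_1)-1$ on the arcs with $3\mid k$, and the error in replacing each arc integral by a Bessel function; your concluding effective-truncation step is the right fix, but these terms must appear in it. Second, your ``key lemma'' (vanishing of the Kloosterman-type sums for \emph{every} $c$ with $\gcd(c,9)=3$ once $3\mid n$) is stronger than necessary and stronger than what the paper proves: since every term with $k\ge 12$ carries $I_1(\frac{\pi}{k}\sqrt{4n-1})\le I_1(\frac{\pi}{12}\sqrt{4n-1})$, which is exponentially smaller than the $k=9$ main term $I_1(\frac{\pi}{9}\sqrt{4n-1})$, only the $k=3$ and $k=6$ sums need to vanish, and those are two one-line evaluations. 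The stronger statement does appear to be true (for instance at $c=12$ and $c=15$ the multiplier values cancel in pairs of $h$ lying in the same class modulo $c/3$), but proving it for all such $c$ is extra work that buys nothing, and even granting it you must still bound the $9\mid c$, $c>9$ tail, the minor arcs, and the non-principal parts exactly as above, so the overall effort is not reduced.
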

	The paper is organized as follows. In Section \ref{sec:prelim}, we give preliminary facts about modular forms and their Fourier coefficients, Kloosterman sums, and Bessel functions. In Section \ref{sec:T:Nw}, we prove \Cref{T:Nw}. Section \ref{sec:T:0w} is devoted to the proof of \Cref{T:0w}. In Section \ref{sec:T:Pw} we show \Cref{T:Pw}. Finally, further related conjectures are given in Appendix \ref{sec:Conjectures}.
	\section*{Acknowledgements}

	The first author has received funding from the European Research Council (ERC) under the European Union's Horizon 2020 research and innovation programme (grant agreement No. 101001179).
	The fourth author was supported by grants from the Research Grants Council of the Hong Kong SAR, China (project numbers HKU 17314122, HKU 17305923).
	\section{Preliminaries}\label{sec:prelim}
\subsection{Modular forms}
We set
\[
\varepsilon_{d}:=\begin{cases} 1 &\text{if }d\equiv 1\pmod{4}\hspace{-1pt},\\ i&\text{if }d\equiv 3\pmod{4}.\hspace{-1pt}\end{cases}
\]
Suppose that  $\kappa\in\frac{1}{2}\Z$ and $\Gamma$ is a congruence subgroup of $\SL_2(\Z)$, with $\Gamma\subseteq \Gamma_0(4)$ if $\kappa\in \Z+\frac12$, containing $T:=\left(\begin{smallmatrix}1&1\\ 0 &1\end{smallmatrix}\right)$. For $\gamma=\left(\begin{smallmatrix}a&b\\c&d\end{smallmatrix}\right)\in \Gamma$, the weight $\kappa$ \begin{it}slash operator\end{it} is defined by
\[
F\big|_{\kappa}\gamma(\tau):=
\begin{cases}
\left( \frac cd \right)^{2k} \varepsilon_d^{2\kappa}(c\tau+d)^{-\kappa} F(\gamma\tau)&\text{if }\kappa\in\Z+\frac12,\\
(c\tau+d)^{-\kappa} F(\gamma\tau)&\text{if }\kappa\in\Z.
\end{cases}
\]
Here $(\frac\cdot\cdot)$ is the extended Legendre symbol. A holomorphic function $F:\H\to\C$ is called a \begin{it}weight $\kappa$ weakly holomorphic modular form on $\Gamma$ with character $\chi$\end{it} if for every $\gamma\in\Gamma$ we have
\[
F|_{\kappa}\gamma  = \chi(d) F
\]
and for every $\gamma\in\SL_2(\Z)$ there exists $n_0\in\Q$ such that
\begin{equation}\label{eqn:weakgrowth}
(c\tau+d)^{-\kappa} F(\gamma \tau) e^{2\pi i n_0\tau}
\end{equation}
is bounded as $\tau\to i\infty$. We call the equivalence classes of $\Gamma\backslash (\Q\cup\{i\infty\})$ the \begin{it}cusps of $\Gamma$\end{it}. We abuse notation and also call representatives $\varrho\in\Q$ of elements of $\Gamma\backslash (\Q\cup\{i\infty\})$ cusps. For a cusp $\varrho$ we choose $\gamma_{\varrho}\in\SL_2(\Z)$ such that $\gamma_{\varrho}(i\infty)=\varrho$. If $F$ is a weakly holomorphic modular form of weight $\kappa$ on $\Gamma$ with some character $\chi$, then $F_{\varrho}(\tau):=(c\tau+d)^{-\kappa}F(\gamma_{\varrho}\tau)$ is invariant under $T^{\sigma_{\varrho}}$ for some $\sigma_{\varrho}\in\N$. Hence $F$ has a Fourier expansion (with $q:=e^{2\pi i\tau}$)
\[
F_{\varrho}(\tau)= \sum_{n\gg-\infty} c_{F,\varrho}(n) q^{\frac{n}{\sigma_{\varrho}}}.
\]
Note that there are only finitely many negative $n$ because of \eqref{eqn:weakgrowth}. We drop $\varrho$ from the notation if $\varrho=i\infty$. We call the terms in the Fourier expansion with $n<0$ the \begin{it}principal part\end{it} of $F$ at the cusp $\varrho$.

	\subsection{Special modular forms}\label{sec:special}
	Recall the transformation law of the partition generating function  $P(q):=\sum_{n\ge0} p(n)q^n =\frac1{(q;q)_\infty}$, where $p(n)$ denotes the number of partitions of $n$, and the {\it Dedekind $\eta$-function} $\eta(\t):=q^{\frac{1}{24}}(q;q)_{\infty}$. We then take $\tau=\frac1k(h+iz)$ with $z\in\C$ with $\re(z)>0$, so that $q=e^{\frac{2\pi i}{k}(h+iz)}$. Here $h,k\in\N$ satisfy $0\le h<k$ and $\gcd(h,k)=1$, and for $hh'\equiv-1\Pmod k$ we set $q_1:=e^{\frac{2\pi i}{k}(h'+\frac iz)}$. Let $\omega_{h,k}$ be defined through
	\begin{equation}\label{E:P}
		P(q) = \omega_{h,k}\sqrt ze^{\frac{\pi}{12k}\left(\frac1z-z\right)}P(q_1).
	\end{equation}
	Then we have (see \cite[equation (5.2.4)]{A1})
	\begin{equation}\label{E:MultP}
		\omega_{h,k} =
		\begin{cases}
			\left(\frac{-k}{h}\right)e^{-\pi i\left(\frac14(2-hk-h)+\frac{1}{12}\left(k-\frac1k\right)\left(2h-h'+h^2h'\right)\right)} & \text{if }h\text{ is odd},\\
			\left(\frac{-h}{k}\right)e^{-\pi i\left(\frac14(k-1)+\frac{1}{12}\left(k-\frac1k\right)\left(2h-h'+h^2h'\right)\right)} & \text{if }k\text{ is odd}.
		\end{cases}
	\end{equation}
With $[a]_b$ the inverse of $a\Pmod b$ for $\gcd(a,b)=1$, $P$ has the following transformation. %
\begin{lem}\label{lem:P|Vd}
If $\gcd(d,k)=g$, then
\[
P\left(q^d\right)= \omega_{\frac{d}{g}h,\frac{k}{g}}\sqrt{\frac{dz}{g}} e^{\frac{\pi g}{12k}\left(\frac{g}{dz}-\frac{dz}{g}\right)} P\left(e^{\frac{2\pi i g}{k}\left(\left[\frac{d}{g}\right]_{\frac{k}{g}} h'+\frac{i g}{dz}\right)}\right).
\]
\end{lem}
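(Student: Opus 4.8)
The plan is to reduce $P(q^d)$ directly to the known transformation \eqref{E:P}, by rewriting $q^d$ in the standard shape $e^{\frac{2\pi i}{K}(H+iZ)}$ with $\gcd(H,K)=1$ and $\re(Z)>0$. First I would use $q=e^{\frac{2\pi i}{k}(h+iz)}$ together with $d=g\cdot\frac{d}{g}$ and $k=g\cdot\frac{k}{g}$ to write
\[
q^d=e^{\frac{2\pi i}{k/g}\left(\frac{d}{g}h+i\frac{d}{g}z\right)}.
\]
Abbreviating $k':=\frac{k}{g}$ and $d':=\frac{d}{g}$, we have $\gcd(d',k')=1$; since $k'\mid k$ and $\gcd(h,k)=1$, also $\gcd(h,k')=1$, whence $\gcd(d'h,k')=1$. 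Reducing $d'h$ modulo $k'$ to a representative $H$ with $0\le H<k'$ changes the exponent only by an integer multiple of $2\pi i$, so $q^d=e^{\frac{2\pi i}{k'}(H+iZ)}$ with $Z:=d'z$ (so that $\re(Z)>0$) and $\gcd(H,k')=1$. This is exactly the input required by \eqref{E:P}.

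Applying \eqref{E:P} with $(H,k',Z)$ in place of $(h,k,z)$ gives
\[
P\left(q^d\right)=\omega_{H,k'}\sqrt{Z}\,e^{\frac{\pi}{12k'}\left(\frac1Z-Z\right)}P(q_1),\qquad q_1=e^{\frac{2\pi i}{k'}\left(H'+\frac{i}{Z}\right)},
\]
where $H'$ is any integer with $HH'\equiv-1\pmod{k'}$. It then remains to rewrite each factor in the variables of the lemma. Substituting $Z=d'z=\frac{dz}{g}$ and $\frac{1}{12k'}=\frac{g}{12k}$ turns $\sqrt{Z}$ into $\sqrt{\frac{dz}{g}}$ and the exponential into $e^{\frac{\pi g}{12k}\left(\frac{g}{dz}-\frac{dz}{g}\right)}$, while the shift $\frac{i}{Z}=\frac{ig}{dz}$ appearing in $q_1$ matches the one asserted.

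The only genuine content is the identification of the two remaining modular data, namely the multiplier $\omega_{H,k'}$ and the new inverse $H'$. For the multiplier I would invoke the periodicity of the $\eta$-multiplier in its first argument, i.e.\ that $\omega_{h,k}$ depends on $h$ only modulo $k$ (which can be read off from \eqref{E:MultP} by tracking the explicit phase), in order to replace $\omega_{H,k'}$ by $\omega_{d'h,k'}=\omega_{\frac{d}{g}h,\frac{k}{g}}$. For the inverse I combine $H\equiv d'h\pmod{k'}$ with $hh'\equiv-1\pmod{k}$ (hence modulo $k'$ as well): then $h^{-1}\equiv-h'$, so $H^{-1}\equiv[d']_{k'}h^{-1}\equiv-[d']_{k'}h'\pmod{k'}$, and therefore $H'\equiv-H^{-1}\equiv[d']_{k'}h'=[\tfrac{d}{g}]_{\frac{k}{g}}h'\pmod{k'}$. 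Since $q_1$ depends on $H'$ only modulo $k'$, this produces precisely the argument of $P$ in the statement.

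The step I expect to require the most care is the bookkeeping for $\omega_{H,k'}$: one must check that reducing $d'h$ modulo $k'$, together with the compatible choice of $H'$, leaves the multiplier \eqref{E:MultP} unchanged, i.e.\ that the phase there is genuinely invariant under adding $k'$ to its first argument, including its dependence through $h'$. Once this invariance is confirmed, the remainder of the argument is a direct substitution of $Z=\frac{dz}{g}$ and $k'=\frac{k}{g}$.
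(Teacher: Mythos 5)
Your proposal is correct, and it is exactly the intended derivation: the paper states \Cref{lem:P|Vd} without proof as a direct consequence of \eqref{E:P}, obtained precisely by rewriting $q^d=e^{\frac{2\pi i}{k/g}(\frac dg h+i\frac dg z)}$, applying \eqref{E:P} with $(\frac dg h \bmod \frac kg,\frac kg,\frac dg z)$, and using that $\omega_{h,k}$ and the choice of $h'$ matter only modulo $k$. Your handling of the two delicate points (the mod-$\frac kg$ invariance of the multiplier, which follows most cleanly from the defining relation \eqref{E:P} itself rather than from \eqref{E:MultP}, and the identification $H'\equiv[\tfrac dg]_{\frac kg}h'\pmod{\frac kg}$) is correct.
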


Moreover, since $\eta(\t)=\frac{q^{\frac{1}{24}}}{P(q)}$, \eqref{E:P} implies that, for any $h,h' \in\Z$ with $hh'\equiv -1\pmod{k}$,
\begin{equation*}
	\eta\left(\frac1k(h+iz)\right)  = \omega_{h,k}^{-1}e^{\frac{\pi i\left(h-h'\right)}{12k}}e^{-\frac{\pi i}{4}} (-iz)^{-\frac12}   \eta\left(\frac1k\left(h'+\frac iz\right)\right).
\end{equation*}
In particular, $\eta(24\t)$ is a weight $\frac{1}{2}$ modular form on $\Gamma_0(576)$ with character $\chi_{12}$, where $\chi_D(n):=(\frac{D}{n})$ (see \cite[Corollary 1.62]{O}).

	\subsection{Zuckerman and exact formulas}
	Let $F$ be a weakly holomorphic modular form of weight $\kappa\in -\frac{1}{2}\N_0$ for some congruence subgroup $\Gamma\subseteq\SL_2(\Z)$.  Suppose that for $\gamma = \begin{psmallmatrix}a&b\\c&d\end{psmallmatrix}\in\SL_2(\Z)$ with $c\neq 0$ and $\varrho=\frac{a}{c}\in\Q$ we have the transformation law
	$$
		F(\gamma \tau) = \chi(\gamma) (c \tau + d)^{\kappa} F_{\varrho}(\tau).
	$$
Now let $\gamma=\gamma_{h,k}\in\SL_2(\Z)$ with $a=h$ and $c=k$. Note that $h'=-d$ satisfies the congruence $hh' \equiv -1 \Pmod k$. Taking $\tau = \frac1k(h'+\frac iz)$, we obtain the transformation
	$$
		F\left(\frac1k(h+iz)\right) = \chi(\gamma_{h,k})(-iz)^{-\kappa}   F_{\varrho}\left(\frac1k\left(h'+\frac iz\right)\right).
	$$
	Let $F$ have the Fourier expansion at $i\infty$
	\[
		F(\tau) = \sum_{n\gg-\infty} a(n)q^{n+\alpha}
	\]
	and Fourier expansions at each rational number $0 \leq \frac{h}{k} < 1$ (with $\gcd(h,k)=1$)
	\begin{equation*}\label{2.5}
		(k\tau-h')^{-\kappa}F\left(\gamma_{h,k}\tau\right) = \sum_{n \gg -\infty} a_{h,k}(n) q^{\frac{n + \alpha_{h,k}}{c_{k}}}.
	\end{equation*}
	Furthermore, let $I_\alpha$ denote the usual $I$-Bessel function.
	In this framework, the relevant theorem of Zuckerman \cite[Theorem 1]{Z}, which was extended to a larger class of functions which include weight zero weakly holomorphic modular forms by Ono and the first author \cite[Theorem 1.1]{BringmannOno}, may be stated as follows.

	\begin{thm}\label{Thm: Zuckerman}
		Assume the notation and hypotheses above. If $n + \alpha > 0,$ then we have
		\begin{multline*}
			a(n) =  2\pi (n+\alpha)^{\frac{\kappa-1}{2}} \sum_{k\ge1} \dfrac{1}{k} \sum_{\substack{0 \leq h < k \\ \gcd(h,k) = 1}}\chi(\gamma_{h,k}) e^{- \frac{2\pi i (n+\alpha) h}{k}}
			\\
			\times \sum_{m+\alpha_{h,k} < 0} a_{h,k}(m) e^{ \frac{2\pi i}{k c_{k}} (m + \alpha_{h,k}) h' } \left( \dfrac{\lvert m +\alpha_{h,k} \rvert }{c_{k}} \right)^{ \frac{1 - \kappa}{2}} I_{-\kappa+1}\left( \dfrac{4\pi}{k} \sqrt{\dfrac{(n + \alpha)\lvert m +\alpha_{h,k} \rvert}{c_{k}}} \right).
		\end{multline*}
	\end{thm}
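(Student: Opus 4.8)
The statement is the classical Hardy--Ramanujan--Rademacher circle method output, so the plan is to reconstruct it rather than invoke a black box. First I would express $a(n)$ as a contour integral: since $F(\tau)e^{-2\pi i\alpha\tau}$ is periodic of period $1$, for any fixed height $y>0$ and $\tau=x+iy$ we have $a(n)=\int_0^1 F(x+iy)e^{-2\pi i(n+\alpha)(x+iy)}\,dx$. I would then take $y=N^{-2}$ and perform a Farey dissection of $[0,1]$ of order $N$, splitting the integral into arcs, one around each Farey fraction $\frac hk$ with $k\le N$ and $\gcd(h,k)=1$.

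On the arc around $\frac hk$ I would substitute $\tau=\frac1k(h+iz)$, turning it into a small path in the $z$-plane, and apply the transformation law recorded just before the theorem, $F(\frac1k(h+iz))=\chi(\gamma_{h,k})(-iz)^{-\kappa}F_\varrho(\frac1k(h'+\frac iz))$ with $\varrho=\frac hk$. Inserting the Fourier expansion of $F_\varrho$ at the cusp produces, for each index $m$, an integrand of the shape $z^{-\kappa}\exp(\frac{2\pi}{k}((n+\alpha)z+\frac{|m+\alpha_{h,k}|}{c_k z}))$ times constants; the exponential in $z$ comes from $e^{-2\pi i(n+\alpha)\tau}$ and the exponential in $1/z$ from the cusp expansion. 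The terms with $m+\alpha_{h,k}<0$ (the principal part) blow up as $z\to0$ and constitute the main contribution, while the remaining terms stay bounded.

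To evaluate the main terms I would deform each Farey arc to a Hankel-type contour encircling $z=0$; the standard loop-integral representation of the $I$-Bessel function then converts $\int z^{-\kappa}\exp(\frac{2\pi}{k}((n+\alpha)z+\frac{|m+\alpha_{h,k}|}{c_k z}))\,dz$ into $2\pi(n+\alpha)^{\frac{\kappa-1}{2}}(\frac{|m+\alpha_{h,k}|}{c_k})^{\frac{1-\kappa}{2}}I_{-\kappa+1}(\frac{4\pi}{k}\sqrt{\frac{(n+\alpha)|m+\alpha_{h,k}|}{c_k}})$, matching the summand. Carrying out the sum over $h$ coprime to $k$ collects the factors $\chi(\gamma_{h,k})e^{-2\pi i(n+\alpha)h/k}e^{2\pi i(m+\alpha_{h,k})h'/(kc_k)}$ into the exponential (Kloosterman-type) sums, and summing over $k$ yields the series.

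The main obstacle is to control the errors and justify that the formula is \emph{exact}: one must bound the difference between each genuine Farey arc and its Hankel completion, together with the contribution of the non-principal bounded part of every cusp expansion, and show these vanish as $N\to\infty$. Because $\kappa\le0$, the growth of $I_{-\kappa+1}$ against the $\frac1k$ weighting and Weil-type bounds on the exponential sums give absolute convergence of the resulting double series, so the limit exists and equals $a(n)$. The weight-zero case $\kappa=0$ sits at the boundary of convergence and requires the sharper estimates of Bringmann--Ono \cite[Theorem 1.1]{BringmannOno}; I would cite their refinement there and Zuckerman \cite[Theorem 1]{Z} for $\kappa\in-\frac12\N$.
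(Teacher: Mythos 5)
Your outline is the standard Rademacher--Zuckerman circle-method argument, and it is worth noting that the paper does not prove this theorem at all: it is imported verbatim, with the negative-weight case credited to Zuckerman \cite[Theorem 1]{Z} and the weight-zero extension to Bringmann--Ono \cite[Theorem 1.1]{BringmannOno}. Measured against that, your proposal is consistent and the computations you do sketch come out right: the Farey dissection at height $N^{-2}$, the use of the transformation law stated before the theorem, and the identification of the principal-part terms as the main contributions are exactly the classical route, and your claimed evaluation of the loop integral does reproduce the stated summand, including the order $I_{-\kappa+1}$ and the prefactor $2\pi(n+\alpha)^{\frac{\kappa-1}{2}}\bigl(\tfrac{|m+\alpha_{h,k}|}{c_k}\bigr)^{\frac{1-\kappa}{2}}$ (to see the order $1-\kappa$ rather than $\kappa-1$ one should perform the substitution $z\mapsto B/z$, or equivalently apply Watson's loop formula after inverting the variable, before matching exponents). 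Two caveats. First, the real content of the theorem is precisely the part you label ``the main obstacle'': the proof that the Farey-arc errors, the Hankel-completion errors, and the non-principal parts of the cusp expansions all vanish as $N\to\infty$, so as written your argument is a correct skeleton whose exactness step is delegated to the same two references the paper cites --- acceptable here, but it is not an independent proof. Second, your convergence remark is slightly off: for $\kappa<0$ the series converges absolutely by trivial estimation of the sum over $h$ (no Weil-type cancellation is needed, since the $k$-th term is $O(k^{\kappa-1})$), whereas Kloosterman-sum cancellation is exactly what rescues the borderline case $\kappa=0$, which is the point of the Bringmann--Ono extension; you acknowledge this in your final sentence, so the deferral is appropriate, but the roles of the two estimates should not be conflated.
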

	\subsection{Kloosterman sums}\hspace{0cm}
	We define the Kloosterman sums
	\begin{equation*}
		K_k(n,m) := \sum_{h\Pmod k^*} e^{\frac{2\pi i}{k}\left(nh+mh'\right)}.
	\end{equation*}
We require  Weil's \cite{Weil} bound
  (see \cite[(11.16)]{IwaniecKowalski} for the statement in this form).
	\begin{lem}\label{L:Klooster}
		We have, for $k\in\N$ and $n,m\in\Z$,
		\[
			K_k(n,m) \le \sqrt{\gcd(n,m,k)} d(k)\sqrt{k},
		\]
	where $d(k)$ denotes the number of divisors of $k$.
	\end{lem}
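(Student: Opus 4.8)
The plan is to prove the bound by the classical reduction to prime moduli, where the only genuinely deep input is Weil's resolution of the Riemann Hypothesis for curves over finite fields. Throughout I write $K_k(n,m)=\sum_{h\pmod{k}^*}e\!\left(\frac{nh+mh'}{k}\right)$ with $e(x):=e^{2\pi ix}$, and note that the sign convention $hh'\equiv-1\pmod{k}$ used here only replaces $m$ by $-m$ relative to the standard normalization $hh'\equiv1$, which changes neither $|K_k(n,m)|$ nor $\gcd(n,m,k)$. Both sides of the asserted inequality behave multiplicatively in a suitable sense, so the strategy is: (i) use the Chinese Remainder Theorem to factor $K_k(n,m)$ over the prime-power divisors of $k$; (ii) bound each prime-power factor; (iii) recombine, checking that the product of the prime-power bounds is exactly $\sqrt{\gcd(n,m,k)}\,d(k)\sqrt{k}$.

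First I would establish twisted multiplicativity. If $k=k_1k_2$ with $\gcd(k_1,k_2)=1$, the decomposition $\frac1k\equiv\frac{\bar{k_2}}{k_1}+\frac{\bar{k_1}}{k_2}\pmod{1}$ (where $k_2\bar{k_2}\equiv1\pmod{k_1}$ and $k_1\bar{k_1}\equiv1\pmod{k_2}$) splits the phase, and running $h$ through $(\IZ/k\IZ)^\times$ via CRT factors the sum as
\[
K_k(n,m)=K_{k_1}(n\bar{k_2},\,m\bar{k_2})\,K_{k_2}(n\bar{k_1},\,m\bar{k_1}).
\]
Since $d(k)=d(k_1)d(k_2)$, since $\gcd(n,m,k)=\gcd(n,m,k_1)\gcd(n,m,k_2)$ for coprime $k_1,k_2$, and since twisting both $n$ and $m$ by the unit $\bar{k_i}$ leaves $\gcd(n,m,k_i)$ unchanged, it suffices to prove
\[
\left|K_{p^\nu}(n,m)\right|\le\sqrt{\gcd(n,m,p^\nu)}\,(\nu+1)\,p^{\nu/2}
\]
for every prime power $p^\nu$, because $d(p^\nu)=\nu+1$ and the right-hand sides then multiply correctly.

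For the prime-power case with $\nu\ge2$ I would argue elementarily. After extracting the largest power of $p$ dividing $\gcd(n,m)$ (which produces the $\sqrt{\gcd(n,m,p^\nu)}$ factor and reduces to $p\nmid\gcd(n,m)$), one writes $h=h_0\big(1+p^{\lceil\nu/2\rceil}t\big)$, so that $h'\equiv-h_0'\big(1-p^{\lceil\nu/2\rceil}t\big)$ to the relevant order, and expands the phase to first order in $t$. Summing over $t$ then forces the \emph{stationary phase} congruence $n h_0^2\equiv -m$ coming from the vanishing of the derivative; by Hensel's lemma this has at most two solutions $h_0$, and counting them yields $|K_{p^\nu}(n,m)|\le2\,p^{\nu/2}\le(\nu+1)p^{\nu/2}$. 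The remaining, decisive, case is $\nu=1$: here $K_p(n,m)$ is a genuine Kloosterman sum over $\IF_p^\times$, and the bound $|K_p(n,m)|\le2\sqrt p$ (for $p\nmid\gcd(n,m)$) is Weil's theorem. This is proven by writing $K_p(n,m)=-(\alpha+\beta)$, where $\alpha,\beta$ are the Frobenius eigenvalues on the relevant $\ell$-adic cohomology (equivalently, the reciprocal roots of the local zeta function of the Artin–Schreier curve $y^p-y=nx+mx^{-1}$), together with the facts $\alpha\beta=p$ and $|\alpha|=|\beta|=\sqrt p$ forced by the Riemann Hypothesis for this curve.

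The main obstacle is precisely this last step: the prime case $\nu=1$ is not elementary and encapsulates the full strength of Weil's bound, whereas the multiplicative reduction, the prime-power evaluation for $\nu\ge2$, and the extraction of the $\gcd$ factor are all routine. In practice one simply cites Weil \cite{Weil} for the $\nu=1$ bound and assembles the elementary pieces around it, as the authors do. The one delicate point in the bookkeeping is keeping track of the unit twists of $(n,m)$ under CRT and verifying that they never inflate $\gcd(n,m,k)$, so that the divisor function $d(k)$ alone governs the dependence on the number of prime factors of $k$.
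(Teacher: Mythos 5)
Your sketch is correct in outline, but note that the paper does not actually prove this lemma at all: it simply quotes Weil's bound in the explicit form \cite[(11.16)]{IwaniecKowalski}, citing \cite{Weil} for the underlying deep input. What you have written is precisely the standard proof lying behind that reference: twisted multiplicativity via the Chinese Remainder Theorem (your factorization $K_k(n,m)=K_{k_1}(n\bar{k_2},m\bar{k_2})K_{k_2}(n\bar{k_1},m\bar{k_1})$ is the correct identity, and unit twists indeed leave $\gcd(n,m,k_i)$ unchanged), an elementary evaluation at prime powers $p^{\nu}$ with $\nu\ge2$, and Weil's theorem (Riemann Hypothesis for the Artin--Schreier curve) at $\nu=1$; the paper and you both ultimately defer the prime case to \cite{Weil}. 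Two small points deserve care if you were to write the elementary part in full. First, the extraction of the $\gcd$ factor is cleanest via the identity $K_{p^{\nu}}(pn_1,pm_1)=p\,K_{p^{\nu-1}}(n_1,m_1)$ for $\nu\ge2$ (and the trivial bound $\varphi(p^{\nu})$ when $p^{\nu}\mid\gcd(n,m)$), which yields exactly the factor $\sqrt{\gcd(n,m,p^{\nu})}$. Second, for odd $\nu\ge3$ the stationary-phase count alone gives $2p^{\lceil\nu/2\rceil}$, not $2p^{\nu/2}$: each solution of $nh_0^2\equiv-m$ modulo $p^{\lfloor\nu/2\rfloor}$ lifts to $p$ classes of $h_0$, and one must evaluate the resulting quadratic Gauss sum in the second-order term to recover the extra saving of $\sqrt p$; the prime $p=2$ also needs separate (routine) treatment. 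These are standard and do not affect the validity of your plan, but they are the places where the sketch, as stated, is slightly too quick.
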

	\subsection{Bessel functions}\hspace{0cm}
	We require certain bounds for the $I$-Bessel functions. Upper bounds for $I_{\kappa}(x)$ are well-known and may be found, for example, in \cite[Lemma 2.2 (1) and (3)]{BKRT}. A lower bound for $I_{1}(x)$ for $x$ sufficiently large was given in \cite[Lemma 2.4]{BHK}, and a lower bound for $I_{\frac{3}{2}}(x)$  follows by a similar argument using \cite[(10.47.7) and (10.49.9)]{NIST}.
	\begin{lem}\label{L:Bessel}\hspace{0cm}
		\begin{enumerate}[wide,labelindent=0pt,label=\rm(\arabic*)]
			\item For $0\le x<1$, $\kappa\in\R$ with $\kappa>-\frac12$, we have
			\begin{equation*}
				I_{\kappa}(x) \le \frac{2^{1-\kappa}x^{\kappa}}{\Gamma(\kappa+1)}.
			\end{equation*}
			\item For $x\geq 1$ and $\kappa\in\R$ with $\kappa>-\frac{1}{2}$, we have
			\begin{equation*}
				I_{\kappa}(x) \le \sqrt{\frac2{\pi x}} e^x.
			\end{equation*}
			\item For $\kappa\in \{1,\frac{3}{2}\}$ and $x\ge 3$, we have
			\begin{equation*}
				I_{\kappa}(x) \ge \frac{e^x}{4\sqrt x}.
			\end{equation*}
		\end{enumerate}
	\end{lem}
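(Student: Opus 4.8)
The plan is to handle the two upper bounds (1)–(2) and the lower bound (3) by completely different means, quoting the literature where the estimate is already recorded and supplying in detail only the one genuinely new computation. For part (1), I would argue directly from the defining power series
\[
I_{\kappa}(x)=\sum_{n\ge0}\frac{(x/2)^{2n+\kappa}}{n!\,\Gamma(n+\kappa+1)}.
\]
Pulling out the $n=0$ term $\frac{(x/2)^{\kappa}}{\Gamma(\kappa+1)}$ and writing $\Gamma(n+\kappa+1)=\Gamma(\kappa+1)\prod_{j=1}^{n}(\kappa+j)$, the hypothesis $\kappa>-\tfrac12$ forces $\kappa+j\ge\kappa+1>\tfrac12$ for every $j\ge1$, so $\prod_{j=1}^{n}(\kappa+j)>2^{-n}$ and each tail term is bounded by $\frac{(x/2)^{2n}2^{n}}{n!}=\frac{(x^2/2)^n}{n!}$. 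Summing gives $\Gamma(\kappa+1)(x/2)^{-\kappa}I_{\kappa}(x)<e^{x^2/2}<e^{1/2}<2$ for $0\le x<1$, which is exactly the claimed inequality. Part (2) is the classical large-argument bound; since the factor $2$ in $\sqrt{2/(\pi x)}=2\cdot\frac1{\sqrt{2\pi x}}$ comfortably dominates the full asymptotic tail $I_{\kappa}(x)\sim e^{x}/\sqrt{2\pi x}$, it holds for all $x\ge1$, and I would simply invoke \cite[Lemma 2.2~(1) and (3)]{BKRT}.

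For the lower bound (3), the case $\kappa=1$ is precisely \cite[Lemma 2.4]{BHK}, so only $\kappa=\tfrac32$ requires an argument. Here I would exploit the half-integer reduction to modified spherical Bessel functions: combining \cite[(10.47.7)]{NIST} with the explicit evaluation \cite[(10.49.9)]{NIST} yields the closed form
\[
I_{3/2}(x)=\sqrt{\tfrac{2}{\pi x}}\left(\cosh x-\frac{\sinh x}{x}\right)=\sqrt{\tfrac{2}{\pi x}}\left(\frac{e^{x}}{2}\left(1-\frac1x\right)+\frac{e^{-x}}{2}\left(1+\frac1x\right)\right).
\]
Both summands in the last expression are positive for $x\ge1$, so discarding the $e^{-x}$ term and using $1-\frac1x\ge\frac23$ for $x\ge3$ gives $\cosh x-\frac{\sinh x}{x}\ge\frac{e^{x}}{3}$, whence $I_{3/2}(x)\ge\frac13\sqrt{\frac2\pi}\,\frac{e^{x}}{\sqrt x}$. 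Since $\frac13\sqrt{2/\pi}=0.266\ldots>\frac14$, this delivers $I_{3/2}(x)\ge\frac{e^{x}}{4\sqrt x}$ for $x\ge3$, completing the lemma.

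The only mildly delicate point is the constant bookkeeping in part (3): one must verify that the crude estimate $1-\frac1x\ge\frac23$ on $[3,\infty)$ leaves enough margin for the prefactor $\frac13\sqrt{2/\pi}$ to clear the target constant $\frac14$, which it does with roughly six percent to spare. Everything else reduces to a direct appeal to the power series in part (1), the cited uniform asymptotics in part (2), and the NIST closed form in part (3), so I do not expect any real obstacle beyond keeping track of these explicit constants.
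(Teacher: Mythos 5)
Your proposal is correct and follows essentially the route the paper intends: the paper offers no written proof, simply citing \cite[Lemma 2.2 (1) and (3)]{BKRT} for the upper bounds and \cite[Lemma 2.4]{BHK} for $\kappa=1$, and indicating that the $\kappa=\frac{3}{2}$ lower bound follows from the closed form coming from \cite[(10.47.7), (10.49.9)]{NIST} --- exactly the computation you carry out, with correct constant bookkeeping ($1-\frac{1}{x}\ge\frac{2}{3}$ for $x\ge3$ and $\frac{1}{3}\sqrt{2/\pi}>\frac{1}{4}$). The only deviation is that you prove part (1) directly from the power series rather than citing it, which is a correct and harmless addition.
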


 	\section{Proof of \Cref{T:Nw} and \Cref{cor:2-color}}\label{sec:T:Nw}
	For ease of notation, we abbreviate $c_1(n):=C_{1^15^{-2}}(n)$, $s_1(n):=\sgn(c_1(n))$, and set
	\begin{equation}\label{eqn:f1^15^{-2}}
		f_1(q) := \frac{(q;q)_\infty}{\left(q^5;q^5\right)^2} = \frac{P\left(q^5\right)^2}{P(q)} = \sum_{n\ge0} c_1(n) q^n.
	\end{equation}
	\subsection{The case $n\equiv3,4\Pmod5$}
	 Using (see Theorem 1.60 of \cite{O})
	 \begin{equation*}
	 	(q;q)_\infty = \sum_{n\in\Z} (-1)^n q^\frac{n(3n+1)}2,
	 \end{equation*}
	 we see that the Fourier coefficients of $f_1$ are not supported on exponents that are congruent to $3,4\Pmod5$.	This gives \Cref{T:Nw} for $n\equiv3,4\Pmod5$.
	\subsection{The exact formula}
We next use \Cref{Thm: Zuckerman} to obtain an exact formula for $c_1(n)$. To state the result, set
\[
\chi_1(h,k):=\frac{\omega_{h,\frac k5}^2}{\omega_{h,k}}.
\]
Note that $\chi_1(h+k,k)=\chi_1(h,k)$, so $\chi_1$ only depends on $h$ (mod $k$).
\begin{lem}\label{lem:1^15^{-2}exact}
For $n\in\N$, we have
	\begin{equation*}%
		c_1(n) = \frac{2\cdot3^{\frac{3}{4}}\pi}{\left(8n-3\right)^{\frac{3}{4}}} \sum_{\substack{k\ge1\\5\mid k}} \frac1k \sum_{h\Pmod k^*} \chi_{1}(h,k) e^{-\frac{2\pi i nh}k} I_\frac32\left(\frac{\pi}{2k}\sqrt{3\left(8n-3\right)}\right).
	\end{equation*}
\end{lem}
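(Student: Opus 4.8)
The plan is to apply \Cref{Thm: Zuckerman} to $f_1$, viewed as a weight $\kappa=-\frac12$ weakly holomorphic modular form. Since $f_1(q)=q^{\frac38}\eta(\tau)\eta(5\tau)^{-2}$ and $\eta$ is non-vanishing on $\H$, the quotient $f_1$ is holomorphic on $\H$ and has the expansion $\sum_{n\ge0}c_1(n)q^n$ at $i\infty$. In Zuckerman's normalisation $F=\sum a(n)q^{n+\alpha}$ this reads $\alpha=-\frac38$ and $a(n)=c_1(n)$, so that $n+\alpha=\frac{8n-3}{8}$, which is positive exactly for $n\in\N$. Everything then comes down to determining the principal parts of $f_1$ at the cusps $\frac hk$, which I read off directly from the transformation behaviour of $P$.

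First I would transform the two factors of $f_1=P(q^5)^2/P(q)$ at $\tau=\frac1k(h+iz)$, applying \Cref{lem:P|Vd} with $d=5$ to $P(q^5)$ and \eqref{E:P} to $P(q)$, with $\omega_{h,k}$ given by \eqref{E:MultP}. Writing $g:=\gcd(5,k)\in\{1,5\}$, the $\sqrt z$-powers combine (for $5\mid k$) to $z^{\frac12}$, consistent with weight $-\frac12$, while the exponential factors combine to $e^{C(\frac1z-z)}$ with $C=\frac{\pi(2g^2-5)}{60k}$. For $g=1$ (that is, $5\nmid k$) one gets $C=-\frac{\pi}{20k}<0$, so the expansion of $f_1$ at $\frac hk$ begins at a strictly positive $q$-power and contributes no principal part; for $g=5$ (that is, $5\mid k$) one gets $C=\frac{3\pi}{4k}>0$. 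This is exactly what restricts the outer sum to $5\mid k$.

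For $5\mid k$ the inner $P$-arguments collapse to $q_1$ and $q_1^5$, so $f_1$ transforms into itself:
\[
f_1\!\left(\tfrac1k(h+iz)\right)=\chi_1(h,k)\,z^{\frac12}e^{\frac{3\pi}{4k}\left(\frac1z-z\right)}f_1\!\left(\tfrac1k\!\left(h'+\tfrac iz\right)\right),\qquad \chi_1(h,k)=\frac{\omega_{h,\frac k5}^2}{\omega_{h,k}}.
\]
Since the principal part of $f_1$ at such a cusp consists of the single term arising from $e^{\frac{3\pi}{4kz}}$ times the constant term $c_1(0)=1$ of $f_1(q_1)$, it has order $\frac{|m+\alpha_{h,k}|}{c_k}=\frac38$ and coefficient $1$. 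Substituting $\kappa=-\frac12$, $n+\alpha=\frac{8n-3}8$, and this order into \Cref{Thm: Zuckerman} yields the Bessel index $-\kappa+1=\frac32$, the argument $\frac{4\pi}{k}\sqrt{\frac{8n-3}{8}\cdot\frac38}=\frac{\pi}{2k}\sqrt{3(8n-3)}$, and the prefactor $2\pi\bigl(\tfrac{8n-3}8\bigr)^{-\frac34}\bigl(\tfrac38\bigr)^{\frac34}=\frac{2\cdot3^{\frac34}\pi}{(8n-3)^{\frac34}}$, matching the claim.

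The main obstacle is the bookkeeping of automorphy and phase factors, rather than any analytic difficulty. The delicate point is to verify that the $\eta$-multipliers $\omega_{h,k}$ from \eqref{E:MultP}, the half-integral-weight branch of $(-iz)^{\frac12}$ versus $z^{\frac12}$, and the phases $e^{-2\pi i(n+\alpha)h/k}$ and $e^{\frac{2\pi i}{kc_k}(m+\alpha_{h,k})h'}$ appearing in \Cref{Thm: Zuckerman} all amalgamate cleanly into $\chi_1(h,k)e^{-2\pi inh/k}$ with a real overall constant and no stray root of unity; combining the $h$- and $h'$-phases here uses $hh'\equiv-1\pmod k$. One should also record that $\chi_1(h+k,k)=\chi_1(h,k)$, already noted before the statement, so that the inner sum over reduced residues $h$ modulo $k$ is well defined.
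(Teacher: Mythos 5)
Your proposal follows essentially the same route as the paper: apply \Cref{Thm: Zuckerman} to $F_1(\tau)=q^{-3/8}f_1(q)$ with $\kappa=-\frac12$, use \eqref{E:P} and \Cref{lem:P|Vd} to see that the cusps with $5\nmid k$ contribute no principal part while for $5\mid k$ the principal part is the single term coming from $\chi_1(h,k)\sqrt z\,e^{\frac{3\pi}{4k}(\frac1z-z)}$, and then read off the prefactor $\frac{2\cdot 3^{3/4}\pi}{(8n-3)^{3/4}}$, the index $\frac32$, and the argument $\frac{\pi}{2k}\sqrt{3(8n-3)}$ exactly as in the paper. The only slip is cosmetic: for $5\nmid k$ the combined exponential is $e^{-\frac{\pi}{20kz}-\frac{3\pi z}{4k}}$ rather than of the symmetric form $e^{C(\frac1z-z)}$, but since only the sign of the $\frac1z$-coefficient matters for excluding a principal part, this does not affect the argument.
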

\begin{proof}
In order to use \Cref{Thm: Zuckerman} for $F_1(\t):=q^{-\frac{3}{8}} f_1(q)$, we need to determine the growth of $F_{1}$ at the cusp $\frac{h}{k}$.
For $5\nmid k$, plugging \eqref{E:P} into \eqref{eqn:f1^15^{-2}} we see that, for $z\to0$,
\[
f_1(q)= 5\frac{\omega_{5h,k}^2}{\omega_{h,k}} \sqrt z e^{-\frac{\pi}{20kz}-\frac{3\pi z}{4k}}\frac{P\left(e^{\frac{2\pi i}{k}\left([5]_k h'+\frac{i}{5z}\right)}\right)^2}{P\left(q_1\right)}\ll \sqrt ze^{-\frac{\pi}{20k}\re\left(\frac{1}{z}\right)}\to 0.
\]
So $F_1$ has no principal part at $\frac{h}{k}$ if $5\nmid k$.

Similarly, for $5\mid k$ we obtain
	\begin{equation*}
		f_1(q) = \chi_1(h,k) \sqrt z e^{\frac{3\pi}{4k}\left(\frac1z-z\right)} (1+O(q_1)).
	\end{equation*}
The claim then follows by \Cref{Thm: Zuckerman}.
\end{proof}

We split the formula in \Cref{lem:1^15^{-2}exact} into a main term
\[
M_1(n):=\frac{4\cdot3^\frac34\pi }{5\left(8n-3\right)^\frac34} I_\frac32\left(\frac{\pi}{10}\sqrt{3(8n-3)}\right)\left( \cos\left(\frac{4\pi n}5\right)-\cos\left(\frac{2\pi}5(n-2)\right)\right)
\]
and an error term
\[
	E_1(n):=\frac{2\cdot3^\frac34\pi}{(8n-3)^{\frac{3}{4}}} \sum_{\substack{k>5 \\5\mid k}} \frac1k \sum_{h\Pmod k^*} \chi_1(h,k) e^{-\frac{2\pi i nh}k} I_\frac32\left(\frac\pi {2k}\sqrt{3(8n-3)}\right).
\]
\begin{lem}\label{lem:main+error:1^15^{-2}}
We have
\[
c_1(n) =M_1(n)+E_1(n).
\]
Moreover, if $|M_1(n)|>|E_1(n)|$, then $s_1(n)$ agrees with the value claimed in \Cref{T:Nw}.
\end{lem}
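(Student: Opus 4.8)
By construction, the identity $c_1(n)=M_1(n)+E_1(n)$ is nothing more than a regrouping of the exact formula in \Cref{lem:1^15^{-2}exact} according to the size of $k$: the terms with $k>5$ (and $5\mid k$) are exactly $E_1(n)$, so the only thing to check is that the single $k=5$ contribution equals the displayed $M_1(n)$. First I would evaluate the multiplier $\chi_1(h,5)=\omega_{h,1}^2/\omega_{h,5}$ for the four residues $h\in\{1,2,3,4\}$. Since $k/5=1$ forces the first index down to $0$ and the exponent in \eqref{E:MultP} vanishes when $k=1$, we have $\omega_{h,1}=1$, whence $\chi_1(h,5)=\omega_{h,5}^{-1}$. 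Plugging $k=5$ into the ``$k$ odd'' branch of \eqref{E:MultP}, computing $h'$ with $hh'\equiv-1\pmod 5$ and the Legendre symbols $\left(\frac{-h}{5}\right)$ for each $h$, produces an explicit root of unity for each $\chi_1(h,5)$.

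With those values in hand, I would evaluate the finite character sum $\sum_{h\Pmod 5^*}\chi_1(h,5)e^{-\frac{2\pi inh}{5}}$ directly. Reducing each exponential modulo $2\pi i$ and pairing the contributions of $h$ and $5-h$, the sum collapses to $2(\cos(\frac{4\pi n}{5})-\cos(\frac{2\pi}{5}(n-2)))$. Inserting the prefactor $\frac{2\cdot3^{\frac34}\pi}{5(8n-3)^{\frac34}}$ and the Bessel factor $I_{\frac32}(\frac{\pi}{10}\sqrt{3(8n-3)})$ reproduces $M_1(n)$ exactly, which gives $c_1(n)=M_1(n)+E_1(n)$.

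For the sign statement, assume $|M_1(n)|>|E_1(n)|$. By the reverse triangle inequality $|c_1(n)|\ge|M_1(n)|-|E_1(n)|>0$ and $\sgn(c_1(n))=\sgn(M_1(n))$. Since $8n-3>0$ for $n\in\N$ and $I_{\frac32}(x)>0$ for $x>0$, the sign of $M_1(n)$ equals the sign of $g(n):=\cos(\frac{4\pi n}{5})-\cos(\frac{2\pi}{5}(n-2))$, which depends only on $n\pmod 5$. A short evaluation gives $g(n)>0$ for $n\equiv0$, $g(n)<0$ for $n\equiv1,2$, and $g(n)=0$ for $n\equiv3,4\pmod 5$, matching \Cref{T:Nw} in the classes $n\equiv0,1,2$. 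For $n\equiv3,4\pmod 5$ one has $M_1(n)=0$, so the hypothesis $|M_1(n)|>|E_1(n)|$ can never hold; the implication is vacuous there, and the value $s_1(n)=0$ is already supplied by the pentagonal-number support argument.

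The main obstacle is the explicit determination of the four multipliers $\chi_1(h,5)$: one must track the Legendre-symbol prefactor and the phase in \eqref{E:MultP} carefully and verify that the outcome is independent of the chosen representative $h'$ of the inverse of $h$ modulo $5$ (for $k=5$ a shift $h'\mapsto h'+5$ alters the phase only by $e^{-2\pi i(h^2-1)}=1$). Once these values are correctly pinned down, the collapse of the character sum to the cosine expression and the three-case sign check are routine.
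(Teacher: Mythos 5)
Your proposal is correct and follows essentially the same route as the paper: the paper likewise identifies $M_1(n)$ as the $k=5$ term of \Cref{lem:1^15^{-2}exact} (leaving the evaluation of $\chi_1(h,5)$ and the cosine collapse as a ``straightforward calculation'' that you spell out explicitly) and then deduces $s_1(n)=\sgn(M_1(n))$ from $|M_1(n)|>|E_1(n)|$. Your added remark that the implication is vacuous for $n\equiv 3,4\pmod 5$ is a harmless clarification consistent with the paper's treatment of those residue classes via the pentagonal number theorem.
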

\begin{proof}
The first identity follows from a straightford calculation showing that $M_1(n)$ is the term with $k=5$ of \Cref{lem:1^15^{-2}exact}.

 Now suppose that $|M_1(n)|>|E_1(n)|$. Then the first identity implies that
\begin{equation*}%
s_1(n)=\sgn\left(M_1(n)\right) = \begin{cases}
			1&\text{if }n\equiv 0\pmod{5},\\
			-1&\text{if }n\equiv 1,2\pmod{5}.
		\end{cases}
\end{equation*}
	This matches what is claimed in \Cref{T:Nw}.
\end{proof}

	\subsection{Bounding the main and error terms}

We next bound the main term and error term from Lemma \ref{lem:main+error:1^15^{-2}}.
\begin{lem}\label{lem:1^15^{-2}ErrorBound}\hspace{0cm}
\begin{enumerate}[wide,labelindent=0pt,label=\rm(\arabic*)]
\item
For $n\equiv 0,1,2\pmod{5}$, we have
\[
		\left|M_1(n)\right|\geq \frac{4\cdot3^\frac34\pi}{5(8n-3)^\frac34} I_\frac32\left(\frac\pi{10}\sqrt{3(8n-3)}\right)\left(1-\cos\left(\frac{2\pi}{5}\right)\right).
\]
\item
We have
\[
\left|E_1(n)\right|\leq \frac{4\sqrt6\pi^{\frac32}}{5\left(8n-3\right)^\frac14}+ \frac{3^\frac54\pi^2}{5(8n-3)^\frac14} I_\frac32\left(\frac\pi{20}\sqrt{8n-3}\right).
\]\rm
\end{enumerate}
\end{lem}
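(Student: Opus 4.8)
The plan is to isolate the (manifestly positive) Bessel prefactor in each estimate and then reduce (1) to a finite trigonometric check and (2) to a standard circle-method tail bound built from Lemmas \ref{L:Klooster} and \ref{L:Bessel}.

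\emph{Part (1).} Since $8n-3>0$ for $n\in\N$ and $I_{\frac32}(x)>0$ for $x>0$, the factor $\frac{4\cdot3^{3/4}\pi}{5(8n-3)^{3/4}}I_{\frac32}(\frac{\pi}{10}\sqrt{3(8n-3)})$ is strictly positive. Writing $M_1(n)$ as this factor times $g(n):=\cos(\frac{4\pi n}{5})-\cos(\frac{2\pi}{5}(n-2))$, we have $|M_1(n)|=\frac{4\cdot3^{3/4}\pi}{5(8n-3)^{3/4}}I_{\frac32}(\frac{\pi}{10}\sqrt{3(8n-3)})\,|g(n)|$, so the claimed inequality is equivalent to $|g(n)|\geq 1-\cos(\frac{2\pi}{5})$ for $n\equiv0,1,2\pmod5$. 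As $g$ is $5$-periodic, I would simply evaluate it on these three classes: a direct computation gives $|g(n)|$ equal to $1-\cos(\frac{4\pi}{5})$, $\cos(\frac{\pi}{5})+\cos(\frac{2\pi}{5})$, and $1-\cos(\frac{2\pi}{5})$ for $n\equiv0,1,2$ respectively, the smallest of which is $1-\cos(\frac{2\pi}{5})$ (attained at $n\equiv2$). This is exactly the asserted bound, and the same computation records $\sgn(g(n))$, matching the signs used in Lemma \ref{lem:main+error:1^15^{-2}}.

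\emph{Part (2).} Here the strategy is to bound the multiplier, recognize a Kloosterman sum, and split the $k$-sum according to the size of the Bessel argument. First, $|\chi_1(h,k)|=1$, since $\chi_1=\omega_{h,k/5}^2/\omega_{h,k}$ is a ratio of the unit-modulus multipliers of \eqref{E:MultP}. Next, for each $k$ I would extract the $h$-dependence of $\chi_1(h,k)$ from \eqref{E:MultP}; after collecting the phases this identifies the inner sum $\sum_{h\Pmod k^*}\chi_1(h,k)e^{-2\pi inh/k}$ with a modulus-one constant times a Kloosterman sum $K_k(\cdot,\cdot)$, to which Weil's bound (Lemma \ref{L:Klooster}) applies, giving a contribution of size $\ll d(k)\sqrt{k}$; for the single class $k=10$ one instead uses the trivial bound $\varphi(10)=4$. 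I would then separate the smallest level $k=10$, which carries the largest Bessel argument and hence dominates, and estimate its $I_{\frac32}$ via Lemma \ref{L:Bessel} to produce the second stated term. For the tail $5\mid k$, $k\geq15$, I would split at the threshold $k\asymp\sqrt{3(8n-3)}$ where $\frac{\pi}{2k}\sqrt{3(8n-3)}$ crosses $1$: below it, Lemma \ref{L:Bessel}(2) converts each term into an exponential whose sum is controlled by its $k=10$-comparable leading piece, while above it, Lemma \ref{L:Bessel}(1) gives $I_{\frac32}\ll k^{-3/2}(8n-3)^{3/4}$, so that summing the convergent series $\sum_{5\mid k}k^{-3/2}$ yields the polynomially-decaying first term.

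The main obstacle is this tail analysis: one must verify the Kloosterman-sum identification (tracking the phases in \eqref{E:MultP} carefully enough to apply Weil's bound) and then carry out the two-regime summation over $k$ uniformly in $n$, with enough control on the constants that the result collapses to exactly the two clean terms stated; part (1), by contrast, is a short finite check. Finally, combined with Lemma \ref{lem:1^15^{-2}exact}, these bounds are what let one conclude in Lemma \ref{lem:main+error:1^15^{-2}} that for large $n$ the main term's exponential rate $\frac{\pi}{10}\sqrt{3(8n-3)}$ strictly exceeds the error's rate $\frac{\pi}{20}\sqrt{3(8n-3)}$, so that $|M_1(n)|>|E_1(n)|$ and only finitely many small $n$ remain to be checked directly.
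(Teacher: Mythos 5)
Your part (1) is correct and is essentially the paper's own argument: you factor out the positive prefactor $\frac{4\cdot3^{3/4}\pi}{5(8n-3)^{3/4}}I_{\frac32}\bigl(\frac{\pi}{10}\sqrt{3(8n-3)}\bigr)$ and minimize $\bigl|\cos\bigl(\frac{4\pi n}{5}\bigr)-\cos\bigl(\frac{2\pi}{5}(n-2)\bigr)\bigr|$ over the three residue classes, finding the minimum $1-\cos\bigl(\frac{2\pi}{5}\bigr)$ at $n\equiv2\pmod5$; your three values agree with the paper's.

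Part (2), however, has a genuine gap, and the route you sketch is both heavier than necessary and does not produce the stated bound. The paper makes no Kloosterman-sum identification here: it bounds $|\chi_1(h,k)e^{-2\pi i nh/k}|=1$ trivially, so the $h$-sum is at most $\varphi(k)\le k$ and cancels the $\frac1k$; after $k\mapsto 5k$ one is left with $\frac{2\cdot3^{3/4}\pi}{(8n-3)^{3/4}}\sum_{k\ge2}I_{\frac32}\bigl(\frac{\pi}{10k}\sqrt{3(8n-3)}\bigr)$, split at $k=\frac{\pi}{10}\sqrt{3(8n-3)}$: the tail is handled by \Cref{L:Bessel} (1) together with an integral comparison (this is what produces the decaying first term — note that summing the full convergent series $\sum_k k^{-3/2}$, as you phrase it, would only give a constant; the $(8n-3)^{-1/4}$ comes from the sum starting above the threshold $\asymp\sqrt{n}$), and the initial range is bounded by the \emph{number} of terms, about $\frac{\pi}{10}\sqrt{3(8n-3)}$, times the largest ($k=2$, i.e.\ original $k=10$) Bessel value, which is what produces the second term. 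Your proposal instead claims the isolated $k=10$ term ``produces the second stated term'': with the trivial bound $\varphi(10)=4$ that term is $\frac{4\cdot3^{3/4}\pi}{5(8n-3)^{3/4}}I_{\frac32}\bigl(\frac{\pi}{20}\sqrt{3(8n-3)}\bigr)$, which has the wrong power of $8n-3$ and of $\pi$; the stated second term only appears after the count of all $k$ in the initial range converts $(8n-3)^{-3/4}$ into $(8n-3)^{-1/4}$ and $\pi$ into $\pi^2$. Moreover, your central step — rewriting $\chi_1(h,k)$ as an explicit exponential in $h,h'$ so that Weil's bound (\Cref{L:Klooster}) applies, the analogue of \Cref{L:Chi} — is exactly the computation you leave unverified, and your $\ll$-estimates cannot yield the fully explicit constants that the lemma asserts and that the finite computer check in the proof of \Cref{T:Nw} requires. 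To prove the lemma as stated, drop the Kloosterman detour, use the trivial bound on the $h$-sum, and carry out the two-range estimate with explicit constants as above.
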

\begin{proof}
	\begin{enumerate}[wide,labelindent=0pt]
		\item Directly from the definition, we have
		\[
			\left|M_1(n)\right|\geq \frac{4\cdot3^\frac34\pi}{5(8n-3)^\frac34} I_\frac32\left(\frac\pi{10}\sqrt{3(8n-3)}\right)\min_{n\in\{0,1,2\}} \left|\cos\left(\frac{4\pi n}5\right)-\cos\left(\frac{2\pi}5(n-2)\right)\right|.
		\]
		The minimum occurs for $n=2$, giving the claim.
		\item Making the change of variables $k\mapsto 5k$ and taking the absolute value inside the sum, we conclude that
		\begin{equation}\label{eqn:Ebound1}
			\left|E_1(n)\right|\leq \frac{2\cdot3^\frac34\pi}{\left(8n-3\right)^\frac34} \sum_{k\ge2} I_\frac32\left(\frac\pi{10k}\sqrt{3(8n-3)}\right).
		\end{equation}
		We split the sum over $k$ in \eqref{eqn:Ebound1} into terms with $k$ small and $k$ large.
		Using \Cref{L:Bessel} (1) we conclude that the contribution to \eqref{eqn:Ebound1} from $k>\frac\pi{10}\sqrt{3(8n-3)}$ is bounded by
		\begin{equation}\label{eqn:error2}
			\frac{2\sqrt3\pi^2}{5\sqrt5}  \sum_{k>\frac\pi{10}\sqrt{3(8n-3)}} k^{-\frac32} \leq  \frac{2\sqrt{3} \pi^2}{5\sqrt{5}}\int_{\frac\pi{10}\sqrt{3(8n-3)}}^\infty x^{-\frac32} dx= \frac{4\sqrt6\pi^{\frac32}}{5\left(8n-3\right)^\frac14}.
		\end{equation}

		Estimating the Bessel function against the term with $k=2$, the contribution from $k\le\frac\pi{10}\sqrt{3(8n-3)}$ is bounded by
		\begin{equation}\label{eqn:error1}
			\frac{2\cdot3^\frac34\pi}{\left(8n-3\right)^\frac34} \sum_{2\le k \le\frac\pi{10}\sqrt{3(8n-3)}} I_\frac32\left(\frac\pi{10k}\sqrt{3(8n-3)}\right)\leq \frac{3^\frac54\pi^2}{5(8n-3)^\frac14} I_\frac32\left(\frac\pi{20}\sqrt{8n-3}\right).
		\end{equation}
		Adding \eqref{eqn:error1} to \eqref{eqn:error2} gives the claim.\qedhere
	\end{enumerate}
\end{proof}
	\subsection{Proof of \Cref{T:Nw}}
	We are now ready to prove \Cref{T:Nw}.
	\begin{proof}[Proof of \Cref{T:Nw}]
	Comparing \Cref{lem:1^15^{-2}ErrorBound} (1) with \Cref{lem:1^15^{-2}ErrorBound} (2), for $n\equiv 0,1,2\pmod{5}$ we conclude by Lemma \ref{lem:main+error:1^15^{-2}} that $s_1(n)$ matches the value claimed in \Cref{T:Nw} if
	\begin{multline*}
		\frac{4\cdot3^\frac34\pi}{5(8n-3)^\frac34} I_\frac32\left(\frac\pi{10}\sqrt{3(8n-3)}\right)\left(1-\cos\left(\frac{2\pi}{5}\right)\right)\\
		>\frac{2^{\frac{5}{2}}\sqrt{3}\pi^{\frac32}}{5\left(8n-3\right)^\frac14}+ \frac{3^\frac54\pi^2}{5(8n-3)^\frac14} I_\frac32\left(\frac\pi{20}\sqrt{8n-3}\right).
	\end{multline*}
	Rearranging, this is equivalent to
	\begin{equation*}\label{eqn:ratiotobound}
		\frac{\sqrt{8n-3}}{\left(1-\cos\left(\frac{2\pi}{5}\right)\right) I_\frac32\left(\frac\pi{10}\sqrt{3(8n-3)}\right)}  \left(\frac{\sqrt{2\pi}}{3^{\frac{1}{4}}}+\frac{\sqrt3\pi}{4} I_\frac32\left(\frac\pi{20}\sqrt{8n-3}\right)\right)<1.
	\end{equation*}
	We next use \Cref{L:Bessel} (2) for $n \geq 6$ to bound
		\begin{equation*}
			I_{\frac{3}{2}}\left(\frac{\pi}{20}\sqrt{8n-3}\right)\leq \frac{2\sqrt{10}}{\pi(8n-3)^\frac14} e^{\frac{\pi}{20}\sqrt{8n-3}}.
		\end{equation*}
	Moreover, by \Cref{L:Bessel} (3) for $n\geq 5$, we have
		\begin{equation*}
			I_{\frac{3}{2}}\left(\frac{\pi}{10}\sqrt{3(8n-3)}\right)\geq  \frac{e^{\frac{\pi}{10}\sqrt{3(8n-3)}}}{2\sqrt{2\pi}\, 3^{\frac{1}{4}}(8n-3)^{\frac{1}{4}}} .
		\end{equation*}
	Thus we want
	\begin{align*}
		\frac{(8n-3)^{\frac{3}{4}}}{1-\cos\left(\frac{2\pi}{5}\right)} 2\sqrt{2\pi} 3^{\frac{1}{4}}e^{-\frac{\pi}{10} \sqrt{3(8n-3)}} \left(\frac{\sqrt{2\pi}}{3^{\frac{1}{4}}}+\sqrt{\frac{15}2} (8n-3)^{-\frac{1}{4}}e^{\frac{\pi}{20}\sqrt{8n-3}}\right) < 1.
	\end{align*}
	It can be shown by computer that this holds for $n\ge33$. For $n<33$, we evaluate $s_1(n)$ with a computer to determine that it agrees with the value claimed in \Cref{T:Nw}.
\end{proof}

We next prove \Cref{cor:2-color}.
\begin{proof}[Proof of \Cref{cor:2-color}]
First recall that
\[
\frac{1}{(q^5;q^5)_{\infty}^2}=\sum_{j\geq0} p_2(j)q^{5j}.
\]
Hence \cite[Theorem 1.60]{O} implies that
\[
\frac{(q;q)_{\infty}}{\left(q^5;q^5\right)_{\infty}^2}=\sum_{n\geq0} \sum_{m\in\Z} (-1)^m p_2\left(\frac{n-\pent{m}}{5}\right) q^{n}.
\]
The result follows by \Cref{T:Nw}.\qedhere
\end{proof}
	\section{Proof of \Cref{T:0w} and \Cref{cor:SquaresTriangularPentagonal}}\label{sec:T:0w}
	We abbreviate $c_2(n):=C_{1^12^{2}4^{-3}}(n)$, $s_2(n):=\sgn(c_2(n))$, and let
	\begin{equation*}
		f_2(q) := \frac{(q;q)_\infty\left(q^2;q^2\right)_\infty^2}{\left(q^4;q^4\right)_\infty^3} = \frac{P\left(q^4\right)^3}{P(q)P\left(q^2\right)^2}.
	\end{equation*}
	\subsection{The exact formula}
	We next obtain an exact formula for $c_2(n)$. Let
	\begin{equation*}
		\chi_2(h,k) := \frac{\omega_{h,\frac k4}^3}{\omega_{h,k}\omega_{h,\frac k2}^2}.
	\end{equation*}
	Note that $\chi_2(h+k,k)=\chi_2(h,k)$, so $\chi_2(h,k)$ only depends on $h$ (mod $k$).
	\begin{lem}\label{lem:1^12^24^{-3}exact}
		We have
		\begin{equation*}%
			c_2(n) = \frac{2\sqrt{7}\pi}{\sqrt{24n-7}} \sum_{\substack{k\ge1\\4\mid k}} \frac1k \sum_{h\pmod{k}^*} \chi_2(h,k) e^{-\frac{2\pi i nh}{k}} I_1\left(\frac\pi{6 k}\sqrt{7\left(24n-7\right)}\right).
		\end{equation*}
	\end{lem}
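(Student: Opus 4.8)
The plan is to mirror the proof of \Cref{lem:1^15^{-2}exact}, applying \Cref{Thm: Zuckerman} (in the weight-zero form due to Ono and the first author) to the weight-$0$ weakly holomorphic modular form
\[
F_2(\tau) := q^{-\frac{7}{24}} f_2(q) = \frac{\eta(\tau)\eta(2\tau)^2}{\eta(4\tau)^3}.
\]
First I would record the relevant parameters: $\kappa = 0$ and $\alpha = -\frac{7}{24}$, so that $n+\alpha = \frac{24n-7}{24}$ and $I_{-\kappa+1} = I_1$, matching the shape of the claimed formula. Since $\kappa=0$, there will be no overall power of $z$ in the transformation, which is the structural feature distinguishing this from the negative-weight computation.

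The core of the argument is to determine the principal part of $F_2$ at each cusp $\frac hk$. As in \Cref{lem:1^15^{-2}exact}, I would substitute \eqref{E:P} and \Cref{lem:P|Vd} into $f_2 = \frac{P(q^4)^3}{P(q)P(q^2)^2}$, tracking for each factor $P(q^d)$ (with $d\in\{1,2,4\}$) the gcd $g=\gcd(d,k)$ and the resulting exponential $e^{\frac{\pi g}{12k}\left(\frac{g}{dz}-\frac{dz}{g}\right)}$. When $4\nmid k$ one splits into the cases $k$ odd (all $g=1$) and $k\equiv 2\pmod 4$ (where $\gcd(2,k)=\gcd(4,k)=2$); in each case the net coefficient of $\frac1z$ turns out to be strictly negative (I compute $-\frac{5\pi}{48k}$ and $-\frac{\pi}{6k}$ respectively), so $f_2\to 0$ as $z\to0$ and $F_2$ has no principal part there. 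When $4\mid k$ one has $\gcd(2,k)=2$ and $\gcd(4,k)=4$; the half-integral powers of $z$ cancel (consistent with weight $0$) and the exponentials combine to
\[
f_2(q) = \chi_2(h,k)\, e^{\frac{7\pi}{12k}\left(\frac1z-z\right)}\bigl(1+O(q_1)\bigr),
\]
with leading coefficient exactly $\chi_2(h,k) = \frac{\omega_{h,\frac k4}^3}{\omega_{h,k}\omega_{h,\frac k2}^2}$ and, via the relation (coefficient of $\frac1z$) $=\frac{2\pi}{k}\cdot\frac{|m+\alpha_{h,k}|}{c_k}$, principal-part index $\frac{|m+\alpha_{h,k}|}{c_k} = \frac{7}{24}$.

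Feeding this into \Cref{Thm: Zuckerman} then produces the claimed formula. The prefactor is $2\pi(n+\alpha)^{\frac{\kappa-1}2}\bigl(\frac{|m+\alpha_{h,k}|}{c_k}\bigr)^{\frac{1-\kappa}2} = 2\pi\bigl(\frac{24n-7}{24}\bigr)^{-\frac12}\bigl(\frac{7}{24}\bigr)^{\frac12} = \frac{2\sqrt7\pi}{\sqrt{24n-7}}$, the Bessel argument is $\frac{4\pi}{k}\sqrt{(n+\alpha)\cdot\frac{7}{24}} = \frac\pi{6k}\sqrt{7(24n-7)}$, and the only surviving cusps are those with $4\mid k$. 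The phases $e^{-2\pi i\alpha h/k}$ and $e^{\frac{2\pi i}{kc_k}(m+\alpha_{h,k})h'}$ are absorbed into the definition of $\chi_2(h,k)$, leaving $\chi_2(h,k)e^{-2\pi i nh/k}$ as in the statement.

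The step I expect to be most delicate is the cusp analysis for $4\nmid k$: one must carefully handle the two parity subcases and confirm that the combined coefficient of $\frac1z$ is strictly negative so that $F_2$ genuinely has no principal part, and---at the cusps with $4\mid k$---verify that the transformation contributes a single dominant exponential with no further negative-index Fourier terms that would add extra Bessel contributions. Because the weight is $0$ rather than negative, the cancellation of the $z^{3/2}$ powers in numerator and denominator is what guarantees a clean leading term, and checking this cancellation is the one place where the computation differs structurally from the negative-weight case of \Cref{lem:1^15^{-2}exact}.
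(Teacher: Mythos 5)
Your proposal is correct and follows essentially the same route as the paper: apply \Cref{Thm: Zuckerman} to $F_2(\tau)=q^{-\frac{7}{24}}f_2(q)$, use \Cref{lem:P|Vd} and \eqref{E:P} to show $f_2$ decays at cusps with $k$ odd (exponent $-\frac{5\pi}{48k}$) and $2\| k$ (exponent $-\frac{\pi}{6k}$), and extract the leading term $\chi_2(h,k)e^{\frac{7\pi}{12k}(\frac1z-z)}(1+O(q_1))$ when $4\mid k$, which yields the stated prefactor and Bessel argument. Your parameter bookkeeping ($\kappa=0$, $\alpha=-\frac{7}{24}$, principal-part index $\frac{7}{24}$) matches the paper's computation.
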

\begin{proof}
	\Cref{lem:P|Vd} and \eqref{E:P} imply that $f_2(q)=O(e^{-\frac{5\pi}{48kz}})$ as $q\to e^\frac{2\pi ih}k$ with $k$ odd. For $2\| k$ \Cref{lem:P|Vd} and \eqref{E:P} again imply that $f_2(q)=O(e^{-\frac\pi{6kz}})$ as $q\to e^\frac{2\pi ih}k$.
	Finally, for $4\mid k$, taking $g=d$ in \Cref{lem:P|Vd} and \eqref{E:P} imply that
	\begin{equation}\label{eqn:fpole}
		f_2(q) = \chi_2(h,k) e^{\frac{7\pi}{12k}\left(\frac1z-z\right)} (1+O(q_1)).
	\end{equation}
	Setting $F_2(\t):=q^{-\frac{7}{24}}f_2(q)$ and plugging \eqref{eqn:fpole} into \Cref{Thm: Zuckerman} gives the claim.\qedhere
\end{proof}

	\subsection{The multiplier system}\label{sec:multiplier}
	We next rewrite the multiplier system $\chi_2$.
	\begin{lem}\label{L:Chi}
		We have
		\begin{equation*}
			\chi_2(h,k) = e^{\frac{2\pi i}{24k}\left(\left(-5\frac {k^{2}}{2}+7\right)h-\left(5\frac {k^{2}}4+7\right)h'\right)},
		\end{equation*}
		where we choose $h'$ to satisfy $hh'\equiv-1\Pmod{8\gcd(k,3)k}$.
	\end{lem}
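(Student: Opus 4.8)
The plan is to expand each factor in $\chi_2(h,k)=\omega_{h,k/4}^3/(\omega_{h,k}\omega_{h,k/2}^2)$ using the explicit formula \eqref{E:MultP} and to collect the resulting Jacobi symbols and exponentials separately. Since $4\mid k$ forces $h$ to be odd (because $\gcd(h,k)=1$ and $k$ is even) and makes both $k$ and $k/2$ even, all three factors can be evaluated with the ``$h$ odd'' branch of \eqref{E:MultP}; note also $\gcd(h,k/2)=\gcd(h,k/4)=1$, so every symbol is defined. A preliminary observation I would record is that $\omega_{h,m}$ is independent of the choice of $h'$ with $hh'\equiv-1\Pmod{m}$: replacing $h'$ by $h'+m$ changes the exponent by $\frac{1}{12}(m^2-1)(h^2-1)$, which is an even integer since $8\mid h^2-1$ (as $h$ is odd) and $3\mid(m^2-1)(h^2-1)$ (as $\gcd(h,m)=1$). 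This lets me use a single common $h'$, which I fix to satisfy $hh'\equiv-1\Pmod{8\gcd(k,3)k}$; such an $h'$ exists because $\gcd(h,8\gcd(k,3)k)=1$, and it is a legitimate choice for each of the moduli $k$, $k/2$, $k/4$.

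For the symbol part, I would use $\left(\frac{-k/2}{h}\right)^2=1$ and $\left(\frac{-k/4}{h}\right)^3=\left(\frac{-k/4}{h}\right)$, together with $\left(\frac{-k}{h}\right)=\left(\frac{-k/4}{h}\right)\left(\frac{4}{h}\right)=\left(\frac{-k/4}{h}\right)$, the last equality because $\left(\frac{4}{h}\right)=\left(\frac{2}{h}\right)^2=1$; hence all Jacobi symbols cancel and only the exponential survives. Writing $\omega_{h,m}=(\text{symbol})\,e^{-\pi i(A(m)+B(m))}$ with $A(m)=\frac14(2-hm-h)$ and $B(m)=\frac{1}{12}\left(m-\frac1m\right)(2h-h'+h^2h')$, direct computation gives $3A(k/4)-A(k)-2A(k/2)=\frac{5hk}{16}$ and $3B(k/4)-B(k)-2B(k/2)=\frac{C}{12}\left(-\frac{5k}{4}-\frac{7}{k}\right)$ with $C:=2h+h'(h^2-1)$. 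Thus $\chi_2(h,k)=\exp\!\left(-\pi i\left[\frac{5hk}{16}+\frac{C}{12}\left(-\frac{5k}{4}-\frac{7}{k}\right)\right]\right)$, and it remains to match this against the claimed value $e^{\pi i\Psi}$ with $\Psi=\frac1{12}\left(-\frac{5kh}{2}+\frac{7h}{k}-\frac{5kh'}{4}-\frac{7h'}{k}\right)$; equivalently, I must show that the total exponent sum $\Phi+\Psi$ lies in $2\Z$, where $\Phi$ denotes the exponent above divided by $-\pi i$.

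Here I expect the main obstacle. After collecting terms, the two $h'(h^2-1)$ contributions recombine with the bare $h'$ terms of $\Psi$ into $h^2h'=h\cdot(hh')$ terms, and substituting $hh'=-1+8N\gcd(k,3)k$ (valid by the choice of $h'$) collapses everything into a remainder of the shape $\Phi+\Psi=-2Nh\gcd(k,3)\cdot\frac{20k_0^2+7}{3}$, where $k=4k_0$. Establishing that this is an even integer is precisely where the modulus $8\gcd(k,3)k$ is needed, and it reduces to proving $3\mid N h\gcd(k,3)(20k_0^2+7)$. I would finish with a case split: if $3\mid k$ then $3\mid\gcd(k,3)$, while if $3\nmid k$ then $3\nmid k_0$, whence $20k_0^2+7\equiv0\Pmod{3}$. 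In either case the remainder is even, so $e^{-\pi i\Phi}=e^{\pi i\Psi}$ and $\chi_2(h,k)$ equals the claimed expression.
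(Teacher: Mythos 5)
Your proposal is correct and takes essentially the same route as the paper: expand the three $\omega$-factors via \eqref{E:MultP} with a single common $h'$ satisfying $hh'\equiv-1\pmod{8\gcd(k,3)k}$, note that the Jacobi symbols cancel, and simplify the combined exponent using this congruence with a case split on $3\mid k$ versus $3\nmid k$; indeed your exponent $-\Phi/2$ coincides with the paper's intermediate quantity $A=\frac1{96k}\left(\left(\left(5k^2+28\right)h^2-5k^2-28\right)h'+\left(-5k^2+56\right)h\right)$. The only difference is that you write out explicitly the final ``direct computation'' (the substitution $hh'=-1+8N\gcd(k,3)k$ and the evenness check) that the paper leaves to the reader.
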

	\begin{proof}
		 Noting that we may choose $[-h]_{\frac{k}{2}}=[-h]_{\frac{k}{4}}=h'$, we may show that
		\begin{equation*}
			\chi_2(h,k) = e^{2\pi iA},
		\end{equation*}
		where
		\begin{equation*}
			A := \frac1{96k} \left(\left(\left(5k^2+28\right)h^2-5k^2-28\right)h'+\left(-5k^2+56\right)h\right).
		\end{equation*}
		The claim follows by a direct computation distinguishing whether $3\mid k$ or $3\nmid k$.
	\end{proof}
	\subsection{The main term}
Define
\begin{align*}
	M_2(n) &:=\frac{\sqrt7\pi\cos\left(\frac\pi2\left(n+\frac14\right)\right)}{\sqrt{24n-7}} I_1\left(\frac\pi{24}\sqrt{7(24n-7)}\right),\\
	E_2(n) &:=\frac{2\sqrt7\pi}{\sqrt{24n-7}} \sum_{\substack{k\geq 5\\4\mid k}} \frac1k \sum_{h\pmod{k}^*} \chi_2(h,k) e^{-\frac{2\pi i nh}{k}} I_1\left(\frac\pi {6k}\sqrt{7\left(24n-7\right)}\right).
\end{align*}
\begin{lem}\label{lem:main+error:1^12^24^{-3}}
We have
\[
c_2(n) =M_2(n)+E_2(n).
\]
Moreover, if $|M_2(n)|>|E_2(n)|$, then $s_2(n)$ agrees with the claimed value in \Cref{T:0w}.
\end{lem}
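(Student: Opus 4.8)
The plan is to mirror the proof of \Cref{lem:main+error:1^15^{-2}}, taking $M_2(n)$ to be the $k=4$ term of the exact formula in \Cref{lem:1^12^24^{-3}exact} and $E_2(n)$ to be the sum of all remaining terms. First I would isolate the $k=4$ contribution: since the outer sum runs over $k$ with $4\mid k$, the smallest index is $k=4$, and the reduced residues modulo $4$ are $h\in\{1,3\}$. Pulling out the factor $\frac14$ and the Bessel function $I_1\!\left(\frac{\pi}{24}\sqrt{7(24n-7)}\right)$, the only remaining piece to evaluate is the finite inner sum $\sum_{h\pmod{4}^*}\chi_2(h,4)e^{-\frac{\pi i nh}{2}}$.

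To evaluate this sum I would apply \Cref{L:Chi}. For $k=4$ one has $-5\frac{k^2}{2}+7=-33$ and $5\frac{k^2}{4}+7=27$, so taking $h=1$, $h'=-1$ gives $\chi_2(1,4)=e^{\frac{2\pi i}{96}(-33+27)}=e^{-\frac{\pi i}{8}}$. Since $h=3\equiv-1\pmod 4$ is the conjugate residue, a short check (using $hh'\equiv-1\pmod{32}$) shows the $h=3$ term is the complex conjugate of the $h=1$ term, so the inner sum equals $2\,\re\!\left(e^{-\frac{\pi i}{8}}e^{-\frac{\pi i n}{2}}\right)=2\cos\!\left(\frac{\pi}{2}\left(n+\frac14\right)\right)$. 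Substituting back reproduces $M_2(n)$ exactly, which establishes the identity $c_2(n)=M_2(n)+E_2(n)$.

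For the second assertion, I would assume $|M_2(n)|>|E_2(n)|$, so the identity forces $\sgn(c_2(n))=\sgn(M_2(n))$. Because $\frac{\sqrt7\pi}{\sqrt{24n-7}}>0$ and $I_1(x)>0$ for $x>0$, the sign of $M_2(n)$ equals the sign of $\cos\!\left(\frac{\pi}{2}\left(n+\frac14\right)\right)$. Evaluating this cosine on each residue class modulo $4$ (using that $\frac{\pi n}{2}$ has period $4$ in $n$) yields $\cos\!\left(\frac\pi8\right)>0$, $-\sin\!\left(\frac\pi8\right)<0$, $-\cos\!\left(\frac\pi8\right)<0$, and $\sin\!\left(\frac\pi8\right)>0$ for $n\equiv0,1,2,3\pmod 4$, matching the values claimed in \Cref{T:0w}.

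I do not expect a genuine obstacle here: the only nonmechanical step is the phase computation $\chi_2(1,4)=e^{-\pi i/8}$ together with the conjugacy of the $h=3$ term, and both reduce to direct substitution into \Cref{L:Chi}. The argument is entirely parallel to \Cref{lem:main+error:1^15^{-2}}, with all the real difficulty deferred to the subsequent estimates comparing $M_2(n)$ and $E_2(n)$.
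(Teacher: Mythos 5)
Your proposal is correct and follows essentially the same route as the paper: isolate the $k=4$ term of \Cref{lem:1^12^24^{-3}exact}, evaluate $\chi_2(h,4)$ via \Cref{L:Chi} to turn the $h$-sum into $2\cos\left(\frac{\pi}{2}\left(n+\frac14\right)\right)$, and then read off the sign of $M_2(n)$ on each residue class modulo $4$. Your phase computation $\chi_2(1,4)=e^{-\pi i/8}$ and the conjugacy of the $h=3$ term (with $h'\equiv-11\pmod{32}$) check out, so you have merely filled in details the paper leaves as "not hard to see."
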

\begin{proof}
For the first identity, we need to show that $M_2(n)$ equals the term with $k=4$ of \Cref{lem:1^12^24^{-3}exact}, which is
	\begin{equation*}
		\frac{\sqrt7\pi}{2\sqrt{24n-7}} \sum_{h\in\{1,3\}} \chi_2(h,4) i^{-nh} I_1\left(\frac\pi{24}\sqrt{7(24n-7)}\right).
	\end{equation*}
	Using \Cref{L:Chi}, the sum on $h$ becomes
	\begin{equation*}
		\sum_{h\in\{1,3\}} e^{\frac{2\pi i}{32}(-11h-9h')} e^{-\frac{\pi ihn}2}.
	\end{equation*}
 From this it is not hard to see that the main term is as claimed.

To see the second statement, note that if $|M_2(n)|>|E_2(n)|$, then
	\begin{equation*}
		s_2(n)=\sgn\left(M_2(n)\right)=\sgn\left(\cos\left(\frac\pi2 \left(n+\frac14\right)\right)\right) =
		\begin{cases}
			1 & \text{if $n\equiv0,3\Pmod4$},\\
			-1 & \text{if $n\equiv1,2\Pmod4$}.
		\end{cases}
	\end{equation*}
This matches the claim in \Cref{T:0w}. \qedhere
\end{proof}
	\subsection{Kloosterman sums}
	Let
	\begin{equation*}
		A_k(n) := \sum_{\substack{0\leq h<k\\\gcd(h,k)=1\\hh'\equiv-1\pmod{8\gcd(k,3)k}}} \chi_2(h,k) e^{-\frac{2\pi inh}k}.
	\end{equation*}
	\begin{lem}\label{L:KloostLemma}
		\hspace{0cm}
		\begin{enumerate}[leftmargin=*,label=\rm(\arabic*)]
			\item We have, for $3\mid k$,
			\begin{equation*}
				A_k(n) = \frac1{24} K_{24k} \left(-5\frac{k^2}2 + 7 - 24n, -5\frac{k^2}4 - 7\right).
			\end{equation*}
			\item We have, for $3\nmid k$,
			\begin{equation*}
				A_k(n) = \frac18 K_{8k} \left(\frac13\left(-5\frac{k^2}2 + 7\right) - 8n, \frac13\left(-5\frac{k^2}4 - 7\right)\right).
			\end{equation*}
		\end{enumerate}
	\end{lem}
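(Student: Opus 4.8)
The plan is to start from the explicit formula for the multiplier $\chi_2(h,k)$ in \Cref{L:Chi} and to recognize $A_k(n)$ as a complete Kloosterman sum after enlarging the range of summation. Write $N:=8\gcd(k,3)k$, so that $h'$ in the definition of $A_k(n)$ is taken modulo $N$. Combining \Cref{L:Chi} with the factor $e^{-2\pi i nh/k}=e^{-\frac{2\pi i\cdot 24nh}{24k}}$ gives
\[
\chi_2(h,k)\, e^{-\frac{2\pi i nh}{k}} = e^{\frac{2\pi i}{24k}\left(\left(-5\tfrac{k^2}2 + 7 - 24n\right)h - \left(5\tfrac{k^2}4 + 7\right)h'\right)}.
\]
First I would treat the case $3\mid k$, where $N=24k$ and the exponent already has denominator $N$; the arguments of the claimed $K_{24k}$ are read off immediately as $-5\frac{k^2}2+7-24n$ and $-(5\frac{k^2}4+7)=-5\frac{k^2}4-7$.

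For $3\nmid k$ (so $N=8k$) the denominator $24k$ is three times the modulus $8k$ of $h'$, so I would first check that both coefficients $-5\frac{k^2}2+7-24n$ and $-(5\frac{k^2}4+7)$ are divisible by $3$. Since $4\mid k$ and $3\nmid k$ one has $k^2\equiv1\pmod 3$, hence $k^2/4\equiv 1$ and $k^2/2\equiv 2\pmod 3$; a short congruence computation then yields $5\frac{k^2}4+7\equiv 0$ and $-5\frac{k^2}2+7-24n\equiv 0\pmod 3$. Dividing the exponent by $3$ rewrites the summand with denominator $8k$ and produces exactly the arguments $\frac13(-5\frac{k^2}2+7)-8n$ and $\frac13(-5\frac{k^2}4-7)$ claimed for $K_{8k}$.

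The core of the argument, common to both cases, is that the summand is constant along the fibers of the reduction map $(\IZ/N\IZ)^{\times}\to(\IZ/k\IZ)^{\times}$. Indeed, the Kloosterman summand $e^{\frac{2\pi i}{N}(aH+bH')}$ with $HH'\equiv-1\pmod N$ is a priori a function of $H\pmod N$, but by \Cref{L:Chi} it equals $\chi_2(H,k)e^{-2\pi i nH/k}$, which depends only on $H\pmod k$. Hence extending the sum from a set of representatives of $(\IZ/k\IZ)^{\times}$ to all of $(\IZ/N\IZ)^{\times}$ merely multiplies it by the common fiber size $\varphi(N)/\varphi(k)$. Because $4\mid k$ forces every $h$ coprime to $k$ to be odd (and coprime to $3$ when $3\mid k$), each such $h$ is automatically a unit mod $N$, so the reduction map is well-defined and surjective. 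Moreover $8k$ has the same prime divisors as $k$ (as $2\mid k$), and $24k$ has the same prime divisors as $k$ (as $2,3\mid k$), so the Euler factors cancel and $\varphi(24k)/\varphi(k)=24$ while $\varphi(8k)/\varphi(k)=8$. This gives $K_N(a,b)=\frac{\varphi(N)}{\varphi(k)}A_k(n)$, which is precisely the two asserted identities.

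The step I expect to require the most care is the bookkeeping of moduli and inverse conventions: one must confirm that the inverse $h'$ used in $A_k(n)$ (taken mod $N$ with $hh'\equiv-1$) is consistent with the inverse appearing in $K_N$, and — in the case $3\nmid k$ — that dividing the exponent by $3$ genuinely yields a summand well-defined modulo $8k$ rather than only modulo $24k$. The divisibility-by-$3$ verification and the totient-ratio computations are elementary but rely essentially on $4\mid k$; once the fiber structure is identified, the remainder is formal.
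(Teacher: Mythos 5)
Your argument is correct and is essentially the paper's own proof, just written out in more detail: the paper likewise uses \Cref{L:Chi} to rewrite the summand and expresses $A_k(n)$ as $\frac1{8\gcd(k,3)}\sum_{h\Pmod{8\gcd(k,3)k}^*}e^{\frac{2\pi i}{24k}\left(\left(-5\frac{k^2}2+7-24n\right)h-\left(5\frac{k^2}4+7\right)h'\right)}$, then distinguishes $3\mid k$ from $3\nmid k$. Your added details (constancy of the summand on the fibers of $(\Z/N\Z)^{\times}\to(\Z/k\Z)^{\times}$ with $N=8\gcd(k,3)k$, the totient ratios $24$ and $8$, and the divisibility by $3$ of both coefficients when $3\nmid k$) are exactly the bookkeeping the paper leaves implicit, and they check out.
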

	\begin{proof}Using \Cref{L:Chi}, we rewrite
		\begin{equation*}
			A_k(n) = \frac1{8\gcd(k,3)} \sum_{h\pmod{8\gcd(k,3)k}^*} e^{\frac{2\pi i}{24k}\left(\left(-5\frac{k^2}2+7-24n\right)h-\left(5\frac{k^2}4+7\right)h'\right)}.
		\end{equation*}
		Distinguishing whether $3\mid k$ or $3\nmid k$ gives the claim.
	\end{proof}

	We can now use \Cref{L:KloostLemma} to explicitly bound $A_k(n)$.
	\begin{lem}\label{lem:Aknbound}
		We have
		\begin{equation*}
			|A_k(n)| \le \frac12\sqrt{\frac{7k}{2}} d(24k).
		\end{equation*}
	\end{lem}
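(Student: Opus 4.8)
The plan is to feed the two closed forms for $A_k(n)$ from \Cref{L:KloostLemma} into Weil's bound (\Cref{L:Klooster}) and then control the gcd that appears there. When $3\mid k$, \Cref{L:KloostLemma}(1) together with \Cref{L:Klooster} gives
\[
|A_k(n)| = \frac{1}{24}\left|K_{24k}\left(-\tfrac{5k^2}{2}+7-24n,\ -\tfrac{5k^2}{4}-7\right)\right| \le \frac{\sqrt{g}}{24}\, d(24k)\sqrt{24k},
\]
where $g := \gcd\left(-\tfrac{5k^2}{2}+7-24n,\ -\tfrac{5k^2}{4}-7,\ 24k\right)$, and the case $3\nmid k$ is analogous with modulus $8k$ and a factor $\tfrac18$. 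Since $\frac{1}{24}\sqrt{24k}=\frac12\sqrt{\tfrac{k}{6}}$ and $\frac{1}{8}\sqrt{8k}=\frac12\sqrt{\tfrac{k}{2}}$, the target $\frac12\sqrt{\tfrac{7k}{2}}\,d(24k)$ will follow once I show that the relevant gcd divides $7$ in both cases, using also $d(8k)\le d(24k)$ (valid because $8k\mid 24k$, so every divisor of $8k$ divides $24k$). Note the second case is the binding one: there the estimate reproduces the stated coefficient exactly, while the first case only yields the smaller $\frac12\sqrt{\tfrac{7k}{6}}\,d(24k)$.

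The heart of the argument is the gcd bound, which I would prove by analyzing the second Kloosterman entry $b:=-\tfrac{5k^2}{4}-7$ (and its analogue $\tfrac13 b$ in the case $3\nmid k$). Since the sums run over $4\mid k$, I write $k=4m$, so $b=-20m^2-7$ is an \emph{odd} integer; hence $2\nmid g$. For an odd prime $p$ dividing both $g$ and the modulus, either $p\mid k$, whence $\tfrac{5k^2}{4}\equiv 0\pmod{p}$ forces $b\equiv -7\pmod{p}$ and therefore $p\mid 7$, or else $p=3$, which can only occur when $3\mid k$ and is thus already covered by the previous alternative. So the only odd prime that can divide $g$ is $7$. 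Finally, writing $m=7m_1$ when $7\mid m$ gives $b=-7(140m_1^2+1)$ with $140m_1^2+1\equiv 1\pmod 7$, so $7^2\nmid b$; hence $7$ occurs to at most the first power and $g\mid 7$. The same computation, after clearing the factor $3$, handles the case $3\nmid k$.

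The main obstacle is precisely this divisibility bookkeeping, since one must use $4\mid k$ twice over — once to guarantee that the entries of the Kloosterman sums are genuine integers (and that the division by $3$ in the case $3\nmid k$ is legitimate), and once to pin down the $2$-adic and $7$-adic valuations of $b$ — while simultaneously tracking how the case split $3\mid k$ versus $3\nmid k$ changes the modulus from $24k$ to $8k$ and removes $3$ from the list of candidate divisors. Everything else reduces to the routine simplification of constants recorded in the first paragraph, where the binding case $3\nmid k$ supplies the factor $\sqrt{7/2}$ and $d(8k)\le d(24k)$ absorbs the discrepancy in moduli, completing the proof.
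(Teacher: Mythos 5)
Your proposal is correct and follows exactly the argument the paper intends (the paper states the lemma without a written-out proof, deriving it from \Cref{L:KloostLemma} and Weil's bound in \Cref{L:Klooster}): plug the two Kloosterman-sum expressions into the Weil bound, note that the second entry $-\tfrac{5k^2}{4}-7=-20m^2-7$ (with $k=4m$) is odd and congruent to $-7$ modulo every odd prime dividing $k$, so the gcd is at most $7$, and then simplify the constants using $d(8k)\le d(24k)$. Your bookkeeping of the binding case $3\nmid k$ and of the $7$-adic valuation is accurate, so the proof is complete.
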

	\subsection{Bounding the error term and finishing the proof of \Cref{T:0w}}
We are now ready to prove \Cref{T:0w}.
\begin{proof}[Proof of \Cref{T:0w}]
	Dividing the error terms $E_2(n)$ by the main term $M_2(n)$, \Cref{lem:main+error:1^12^24^{-3}} implies that \Cref{T:0w} follows if
	\[
		\frac{\left|E_2(n)\right|}{\left|M_2(n)\right|}<1.
	\]
Applying the triangle inequality and then using \Cref{lem:Aknbound}, we need to bound
\begin{align}\nonumber
\frac{\left|E_2(n)\right|}{\left|M_2(n)\right|}&\leq \frac{2}{\left|\cos\left(\frac{\pi}{2}\left(n +\frac14\right)\right)\right| I_1\left(\frac\pi{24}\sqrt{7(24n-7)}\right)}\sum_{\substack{k\ge5\\4\mid k}} \frac{\left|A_k(n)\right|}k I_1\left(\frac\pi{6k}\sqrt{7(24n-7)}\right)\\
&\le \frac{\sqrt7}{2\sqrt{2}\cos\left(\frac{3\pi}{8}\right) I_1\left(\frac\pi{24}\sqrt{7(24n-7)}\right)}  \sum_{k\ge2} \frac{d(4\cdot24k)}{\sqrt k} I_1\left(\frac\pi{24k}\sqrt{7(24n-7)}\right).\label{eqn:E2/M2bound}
\end{align}
	Note that
	\begin{equation*}%
		d(96k) \le d(96) d(k) = 12d(k).
	\end{equation*}
Hence, using \Cref{L:Bessel} (1), the contribution to the right-hand side of \eqref{eqn:E2/M2bound} from $k>\frac\pi{24}\sqrt{7(24n-7)}$ can be bounded by
	\begin{equation*}
		\frac{7\pi\z\left(\frac32\right)^2\sqrt{24n-7}}{4\sqrt{2}\cos\left(\frac{3\pi}{8}\right)I_1\left(\frac\pi{24}\sqrt{7(24n-7)}\right)} .
	\end{equation*}

	We next consider the contribution to the right-hand side of \eqref{eqn:E2/M2bound} from  the terms with $2\leq k\le\frac\pi{24}\sqrt{7(24n-7)}$. Since $k\geq 2$, we may trivially bound
	\[
		I_1\left(\frac\pi{24k}\sqrt{7(24n-7)}\right)\leq I_1\left(\frac\pi{48}\sqrt{7(24n-7)}\right),
	\]
	which we may pull out of the sum on $k$. Since divisors of $k$ may be paired as $(d,\frac{k}{d})$ with $d\leq \frac{k}{d}$, we have at most $\sqrt{k}$ such pairs, and hence we may trivially bound $d(k)\leq 2\sqrt{k}$. Hence the overall contribution to the right-hand side of \eqref{eqn:E2/M2bound} from the terms with $2\leq k\le\frac\pi{24}\sqrt{7(24n-7)}$ may be bounded from above by
	\begin{equation*}
		\frac{7\pi \sqrt{24n-7}}{2\sqrt{2}\cos\left(\frac{3\pi}{8}\right)} \frac{I_1\left(\frac\pi{48}\sqrt{7(24n-7)}\right)}{I_1\left(\frac\pi{24}\sqrt{7(24n-7)}\right)}.
	\end{equation*}
	Combining the two errors, we need
	\begin{equation}\label{eqn:IratioToBound}
		\frac{7\pi\z\left(\frac32\right)^2\sqrt{24n-7}}{4\sqrt{2}\cos\left(\frac{3\pi}{8}\right)I_1\left(\frac\pi{24}\sqrt{7(24n-7)}\right)} + \frac{7\pi  \sqrt{24n-7}}{2\sqrt{2}\cos\left(\frac{3\pi}{8}\right)}  \frac{I_1\left(\frac\pi{48}\sqrt{7(24n-7)}\right)}{I_1\left(\frac\pi{24}\sqrt{7(24n-7)}\right)} < 1.
	\end{equation}

	We next bound \eqref{eqn:IratioToBound} against elementary functions.
	Using \Cref{L:Bessel} (2),(3) for\\ $\frac{\pi}{24}\sqrt{7(24n-7)}\geq 3$, the left-hand side of \eqref{eqn:IratioToBound} may for $n\geq 99$ be estimated against
	\begin{equation*}
		\frac{7^\frac54\pi^\frac32\z\left(\frac32\right)^2(24n-7)^\frac34}{4\sqrt3\cos\left(\frac{3\pi}{8}\right)} e^{-\frac\pi{24}\sqrt{7(24n-7)}} + \frac{14\sqrt{2\pi}\sqrt{24n-7}}{\cos\left(\frac{3\pi}{8}\right)} e^{-\frac\pi{48}\sqrt{7(24n-7)}}<1
	\end{equation*}
	\Cref{T:0w} now follows as it was checked with a computer for $n\leq 98$.
\end{proof}
We conclude the section with a proof of \Cref{cor:SquaresTriangularPentagonal}.
\begin{proof}[Proof of \Cref{cor:SquaresTriangularPentagonal}]
Using \cite[Theorem 1.60]{O}, we can write
\begin{align}
\nonumber \frac{(q;q)_{\infty}(q^2;q^2)_{\infty}^2}{(q^4;q^4)_{\infty}^3} &= \left(\frac{\left(q^2;q^2\right)_\infty^2}{\left(q^4;q^4\right)_\infty}\right)^3 \left(\frac{\left(q^2;q^2\right)_\infty^2}{(q;q)_\infty}\right)^{-2}\frac1{(q;q)_\infty}\nonumber\\
&=\left(\sum_{n\in\Z} (-1)^n q^{2n^2}\right)^3  \frac1{\left(\sum_{n\ge0}q^{\frac{(2n+1)^2-1}{8}}\right)^2} \frac{1}{\sum_{n\in\Z} (-1)^nq^{\frac{(6n+1)^2-1}{24}}}.\label{eqn:squarestrianglespentagonal}
\end{align}
Noting that $\frac{(2n+1)^2-1}{8}=T_n$ and $\frac{(6n+1)^2-1}{24}=\pent{-n}$, the result follows from \Cref{T:0w} after expanding the denominators appearing in \eqref{eqn:squarestrianglespentagonal} as geometric series.
\end{proof}
	\section{Proof of \Cref{T:Pw}}\label{sec:T:Pw}
	\subsection{General transformations}
We abbreviate $c_3(n):=C_{1^93^{-5}}(n)$ and $s_3(n):=\sgn(c_3(n))$. We determine the transformation law of
	\begin{equation*}
		f_3(q) := \frac{(q;q)_\infty^9}{\left(q^3;q^3\right)_\infty^5} = \frac{P\left(q^3\right)^5}{P(q)^9}=\sum_{n\geq 0} c_3(n)q^n.
	\end{equation*}
For $3\mid k$, we set
\begin{equation}\label{eqn:chi3}
\chi_3(h,k):=\frac{\omega_{h,\frac k3}^5}{\omega_{h,k}^9}.
\end{equation}
\Cref{lem:P|Vd}  with $d=g$ and \eqref{E:P} yield
	\begin{equation}\label{eqn:3midk}
		f_3(q) = \chi_3(h,k) z^{-2} e^{\frac\pi{2k}\left(\frac1z-z\right)} f_3(q_1).
	\end{equation}
For $3\nmid k$, we note that we may choose $h'$ with $3\mid h'$, so that
\[
[3]_k h'\equiv \frac{h'}{3}\pmod{k}.
\]
Hence for $3\nmid k$ and $3\mid h'$, \Cref{lem:P|Vd} and \eqref{E:P} yield
	\begin{equation}\label{eqn:3nmidk}
		f_3(q) = \frac{\omega_{3h,k}^5}{\omega_{h,k}^9} 3^\frac52 z^{-2} e^{-\frac{11\pi}{18kz}-\frac{\pi z}{2k}} \frac{P\left(q_1^\frac13\right)^5}{P(q_1)^9}.
	\end{equation}
In particular, this vanishes as  $z\to 0$.
	\subsection{The Circle Method}
	Using the residue theorem, we have
	\begin{equation*}
		c_3(n) = \frac1{2\pi i} \int_{\mathcal C} \frac{f_3(q)}{q^{n+1}} dq =  \sum_{\substack{0\le h<k<N\\\gcd(h,k)=1}} e^{-\frac{2\pi inh}k} \int_{-\vartheta_{h,k}'}^{\vartheta_{h,k}''} f_3\left(e^{\frac{2\pi i}k(h+iz)}\right) e^{\frac{2\pi nz}k} d\Phi = \sum{}_{\substack{\\[6pt]\hspace{-0.05cm}1}} + \sum{}_{\substack{\\[6pt]\hspace{-0.05cm}3}},
	\end{equation*}
	where $z=\frac kn+ik\Phi$, $\vartheta_{h,k}':=\frac1{k(k+k_1)}$, $\vartheta_{h,k}'':=\frac1{k(k+k_2)}$ with $\frac{h_1}{k_1}<\frac hk<\frac{h_2}{k_2}$ are consecutive fractions in the Farey sequence of order $N:=\lfloor\sqrt n\rfloor$ and where $\sum_d$ is the restriction to $k$ with $\gcd(k,3)=d$. Assuming that $d\mid h'$, we may use \eqref{eqn:3midk} and \eqref{eqn:3nmidk} to obtain
	\begin{align*}
		\sum{}_{\substack{\\[6pt]\hspace{-0.05cm}3}} %
		&=\sum_{\substack{0\le h<k<N\\\gcd(h,k)=1\\3\mid k}}
\chi_3(h,k) e^{-\frac{2\pi inh}k} \int_{-\vartheta_{h,k}'}^{\vartheta_{h,k}''} z^{-2} e^{\frac\pi{2k}\left(\frac1z-z\right)}  f_3(q_1)
		e^{\frac{2\pi nz}k} d\Phi,\\[-1.5em]
		\sum{}_{\substack{\\[6pt]\hspace{-0.05cm}1}} %
		&= 3^\frac52 \sum_{\substack{0\le h<k<N\\\gcd(h,k)=1\\3\nmid k}} \frac{\omega_{3h,k}^5}{\omega_{h,k}^9} e^{-\frac{2\pi inh}k} \int_{-\vartheta_{h,k}'}^{\vartheta_{h,k}''} z^{-2} e^{-\frac{11\pi}{18kz}-\frac{\pi z}{2k}}  \frac{P\left(q_1^\frac13\right)^5}{P(q_1)^9} e^{\frac{2\pi nz}k} d\Phi.
	\end{align*}
 We require the well-known bounds
	\begin{equation}\label{eqn:Farey}
	\vartheta_{h,k}',\vartheta_{h,k}''\leq \frac{1}{k(N+1)},\quad \re\left(\frac{1}{z}\right)\geq\frac{k}{2},\quad |z|\geq\frac{k}{n}.%
	\end{equation}

	Next we split-off the principal parts. Note that these only occur if $3\mid k$. We write
	\begin{equation*}
		f_3(q_1) = 1 +  \left(f_3(q_1)-1\right).
	\end{equation*}
	We call the contribution from the constant term $\sum_{3,1}$ and the remaining sum $\sum_{3,2}$. We have
\begin{equation}\label{eqn:CircleMain1}
\sum{}_{\substack{\\[6pt]\hspace{-0.05cm}3,1}} = -i\sum_{\substack{0\le h<k<N\\\gcd(h,k)=1\\3\mid k}} \frac{\chi_3(h,k)}{k} e^{-\frac{2\pi inh}k} \int_{\frac kn-ik\vartheta_{h,k}'}^{\frac kn+ik\vartheta_{h,k}''} z^{-2} e^{\frac\pi{2kz}+\frac{\pi z}{2k}(4n-1)} dz.
\end{equation}
	We now approximate the integral in the following lemma.
	\begin{lem}\label{lem:BesselBoundExplicit}
		We have
		\[
		\left|\int_{\frac kn-ik\vartheta_{h,k}'}^{\frac kn+ik\vartheta_{h,k}''} z^{-2} e^{\frac\pi{2kz}+\frac{\pi z}{2k}(4n-1)} dz - 2\pi i\sqrt{4n-1} I_1\left(\frac\pi k\sqrt{4n-1}\right)\right|\leq  4588024 \frac{n^{\frac{3}{2}}}{k^2}.
		\]
	\end{lem}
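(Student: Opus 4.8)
The plan is to identify the claimed main term as the value of the \emph{same} integrand integrated over the entire vertical line $\re(z)=\frac kn$, and then to control the difference as the contribution of the two tails of that line.

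Write $A:=\frac{\pi}{2k}$ and $B:=\frac{\pi}{2k}(4n-1)$, so the integrand is $z^{-2}e^{A/z+Bz}$ and $AB=\frac{\pi^2(4n-1)}{4k^2}=\frac{x^2}{4}$ with $x:=\frac\pi k\sqrt{4n-1}$. Starting from the Hankel-type contour representation of the $I$-Bessel function (see, e.g., \cite{NIST}), $I_1(x)=\frac{x/2}{2\pi i}\int_{\re(u)=c}u^{-2}e^{u+x^2/(4u)}\,du$ for any $c>0$, I would substitute $u=Bz$ (which sends $\re(u)=\frac{Bk}{n}$ to $\re(z)=\frac kn$) to obtain
\[
\int_{\re(z)=\frac kn} z^{-2}e^{\frac{\pi}{2kz}+\frac{\pi z}{2k}(4n-1)}\,dz = 2\pi i\sqrt{\tfrac BA}\,I_1\!\left(2\sqrt{AB}\right)=2\pi i\sqrt{4n-1}\,I_1\!\left(\tfrac\pi k\sqrt{4n-1}\right).
\]
The equality of the vertical-line integral with the Hankel contour is justified by closing to the left, where $e^u\to0$ as $\re(u)\to-\infty$ and the large left-hand arcs contribute $O(R^{-1})$; the integrand is holomorphic away from $z=0$ and decays like $|\im z|^{-2}$ on the line, so both integrals converge. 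Consequently, the quantity estimated in the lemma is precisely, up to sign, the integral over the two tails $\im(z)\ge k\vartheta''_{h,k}$ and $\im(z)\le-k\vartheta'_{h,k}$ of the line $\re(z)=\frac kn$, and its modulus is at most the sum of the moduli of these two tail integrals.

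On a tail I set $z=\frac kn+iy$ and bound the three factors of the integrand separately: $|z^{-2}|\le y^{-2}$; $|e^{\frac{\pi z}{2k}(4n-1)}|=e^{\frac{\pi(4n-1)}{2n}}\le e^{2\pi}$; and $\re\big(\frac{\pi}{2kz}\big)=\frac{\pi}{2k}\cdot\frac{k/n}{(k/n)^2+y^2}\le\frac{\pi}{2ny^2}$. The decisive point is that $y$ is bounded below on the tails: since $k\vartheta''_{h,k}=\frac1{k+k_2}$ with $k,k_2\le N=\lfloor\sqrt n\rfloor$ (and likewise $k\vartheta'_{h,k}=\frac1{k+k_1}$), one has $y^2\ge\frac1{4N^2}\ge\frac1{4n}$ on each tail, whence $\re\big(\frac{\pi}{2kz}\big)\le 2\pi$ and the whole integrand is bounded by $e^{4\pi}y^{-2}$ there. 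Integrating, $\int_{k\vartheta''_{h,k}}^{\infty}y^{-2}\,dy=k+k_2\le 2\sqrt n$, and similarly for the other tail, so the tails contribute at most $4e^{4\pi}\sqrt n$; using $k<N\le\sqrt n$, i.e.\ $\sqrt n\le n^{3/2}/k^2$, and tracking the constants through the estimate produces the stated bound $4588024\,n^{3/2}/k^2$.

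The genuinely delicate step is the control of $e^{\frac{\pi}{2kz}}$ on the tails. At the centre of the segment this factor has modulus $e^{\pi n/(2k^2)}$, which is astronomically larger than the main term $I_1$; the estimate survives only because the Farey dissection \eqref{eqn:Farey} forces each arc to have length $\gtrsim n^{-1/2}$, equivalently $|\im z|\gtrsim n^{-1/2}$ once one leaves the segment, and this is exactly what pins $\re\big(\frac{\pi}{2kz}\big)$ below $2\pi$. Everything else is the routine bookkeeping needed to pass from the clean bound $O(\sqrt n)$ to the explicit constant in the statement, uniformly in $h$ and $k$.
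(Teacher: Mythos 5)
Your proposal is correct, and it reaches the bound by a cleaner contour decomposition than the paper's. The paper follows Lehner: starting from Watson's loop representation of $I_1$, it deforms the Hankel contour into a seven-piece path whose fourth piece is the segment in question, and then bounds the two vertical pieces at $\re(z)=-\frac kn$ (each by $n^{3/2}/k^2$), the two horizontal pieces at heights $-k\vartheta_{h,k}'$ and $k\vartheta_{h,k}''$ (each by $8e^{4\pi}k$), and uses the cancellation of the two rays along the negative real axis; adding up gives the constant $2+16e^{4\pi}\le 4588024$. You instead take the full vertical line $\re(z)=\frac kn$ as the Bessel representation (your substitution $u=Bz$ in $I_1(x)=\frac{x/2}{2\pi i}\int_{\re(u)=c}u^{-2}e^{u+x^2/(4u)}\,du$ is correct, with $x=\frac\pi k\sqrt{4n-1}$ and $\sqrt{B/A}=\sqrt{4n-1}$), so the error is exactly the two infinite vertical tails. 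Your tail estimates are sound: $|e^{\frac{\pi z}{2k}(4n-1)}|\le e^{2\pi}$ since $\re(z)=\frac kn$, and the Farey bound $k\vartheta_{h,k}',k\vartheta_{h,k}''\ge\frac1{2N}\ge\frac1{2\sqrt n}$ pins $\re\left(\frac{\pi}{2kz}\right)\le 2\pi$ on the tails, exactly the same two inputs the paper uses on its extra pieces. Integrating $e^{4\pi}y^{-2}$ over each tail gives at most $2e^{4\pi}\sqrt n$ per tail, hence $4e^{4\pi}\sqrt n\le 4e^{4\pi}n^{3/2}/k^2\approx 1.15\cdot 10^6\,n^{3/2}/k^2$, which is comfortably below the stated $4588024\,n^{3/2}/k^2$; so even without further constant-tracking your argument proves the lemma (indeed with a better constant). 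What your route buys is fewer pieces to estimate and no need for the horizontal excursions or the cancellation $J_1+J_7=0$; what the paper's route buys is that it literally follows the classical Lehner template, so the contour-deformation justification can be cited rather than argued. Your deformation from the Hankel loop to the vertical line is standard and your sketch (decay of $e^u$ to the left, $u^{-2}$ decay on the connecting arcs) is an adequate justification.
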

	\begin{proof}
		We follow the argument given in \cite[pp. 404--405]{Lehner}. Let $A:=\frac\pi{2k}(4n-1)$ and $B:=\frac\pi{2k}$. From \S 6.22, equation (1) of \cite{W}, we obtain
		\begin{multline*}
			\hspace{-0.2cm}2\pi i\sqrt{4n-1} I_1\left(\frac\pi k\sqrt{4n-1}\right) = \Bigg(\int_{-\infty}^{-\frac kn}+\int_{-\frac kn}^{-\frac kn-ik\vth_{h,k}'}+\int_{-\frac kn-ik\vth_{h,k}'}^{\frac kn-ik\vth_{h,k}'}+ \int_{\frac kn-ik\vth_{h,k}'}^{\frac kn+ik\vth_{h,k}''}+\int_{\frac kn+ik\vth_{h,k}''}^{-\frac kn+ik\vth_{h,k}''}\\
			+ \int_{-\frac kn+ik\vth_{h,k}''}^{-\frac kn}+ \int_{-\frac{k}{n}}^{-\infty} \Bigg) z^{-2}\exp\left(Az+\frac{B}{z}\right) dz =: J_1 + J_2 + \ldots + J_7.
		\end{multline*}
		Note that $J_4$ is the integral appearing on the left-hand side of the lemma, so, subtracting that from the other side, it remains to bound the absolute values of the other integrals.

		For $J_2$, we have $\re(z)=-\frac kn<0$ and $\re(\frac1z)=\frac{\re(z)}{|z|^2}<0$. Thus, we have
		\[
			|J_2| \le \int_{-\frac kn}^{-\frac kn-ik\vth_{h,k}'} |z|^{-2} d|z| = \int_0^{k\vth_{h,k}'} \frac1{\frac{k^2}{n^2}+y^2} dy.
		\]
		We bound this by setting $y=0$ in the integrand and get
		\[
			|J_2|\le \frac{n^2}{k^2}k\vth_{h,k}' \le \frac{n^2}{k^2(N+1)} \leq \frac{n^\frac32}{k^2}.
		\]
		We have the same bound for $|J_6|$.

		For $J_3$, we make the change of variables $z=x-ik\vth_{h,k}'$.  Note that here $-\frac kn\le\mathrm{Re}(z)=x\leq \frac{k}{n}$ and $\re(\frac1z)\le 4k$. Thus, on $J_3$ we have
		\[
			\left|\exp\left(Az+\frac Bz\right)\right| \le \exp(4\pi),
		\]
		so
		\begin{equation*}
			|J_3| \le \exp(4\pi) \int_{-\frac kn}^\frac kn \frac{1}{|x-ik\vth_{h,k}'|^{2}} dx \le \exp(4\pi) \int_{-\frac kn}^\frac kn \frac{1}{x^2+(k\vth_{h,k}')^2} dx.
		\end{equation*}
		We bound the integral by setting $x=0$ in the integrand and get $|J_3|\le8\exp(4\pi) k$. We bound $J_5$ in the same way.

		Finally, we combine
\[
J_1+J_7=0.
\]
		Using that $1\le k\le\sqrt n$, we easily obtain the claim.
	\end{proof}
	Plugging \Cref{lem:BesselBoundExplicit} into \eqref{eqn:CircleMain1}, we obtain
	\begin{equation}\label{eqn:CircleMain}
		\sum{}_{\substack{\\[6pt]\hspace{-0.05cm}3,1}} = 2\pi\sqrt{4n-1}\sum_{\substack{0\le h<k<N\\\gcd(h,k)=1\\3\mid k}} \frac{\chi_3(h,k)}{k} e^{-\frac{2\pi inh}k}  I_{1}\left(\frac{\pi}{k}\sqrt{4n-1}\right) + E(n),
	\end{equation}
	where $E(n)$ may be bounded against
	\begin{equation}\label{eqn:Enbound}
		|E(n)| \le  4588024n^\frac32 \sum_{\substack{1\le k<N\\3\mid k}} \frac1{k^2}\le \frac{4588024}{9} \zeta(2) n^\frac32 = \frac{2294012\pi^2}{27} n^\frac32.
	\end{equation}

We define
\begin{align*}
	M_3(n):&=\frac{2\pi}{3\gcd(n,3)} \alpha_n \sqrt{4n-1} I_{1}\left(\frac{\pi\sqrt{4n-1}}{3\gcd(n,3)}\right),\\
	E_3(n):&= 2\pi\sqrt{4n-1}\sum_{\substack{0\le h<k<N\\\gcd(h,k)=1\\3\mid k\\ k\geq 3\ell_n}} \frac{\chi_3(h,k)}{k} e^{-\frac{2\pi inh}k}  I_{1}\left(\frac{\pi}{k}\sqrt{4n-1}\right),
\end{align*}
where
\begin{align*}
	\alpha_n:&=\begin{cases}
		-2\sin\left(\frac{2\pi n}{3}\right)&\text{if }3\nmid n,\\
		2\left(\sin\left(\frac{2\pi}9(4-2n)\right)-\sin\left(\frac{2\pi}9(5-n)\right)-\sin\left(\frac{2\pi}9(5-4n)\right)\right)&\text{if }3\mid n,
	\end{cases}\\
	\ell_n:&=\begin{cases}2&\text{if }3\nmid n,\\ 4&\text{if }3\mid n.\end{cases}
\end{align*}
We obtain a formula for $c_3(n)$.
\begin{lem}\label{lem:1^93^{-5}almostexact}
We have
\[
c_3(n)=M_3(n)+E_3(n) +E(n)+\sum{}_{\substack{\\[6pt]\hspace{-0.05cm}3,2}}+\sum{}_{\substack{\\[6pt]\hspace{-0.05cm}1}}.
\]
Moreover, if $|M_3(n)|>|E_3(n)| +|E(n)|+|\sum_{3,2}| + |\sum_1|$, then $s_3(n)=\sgn(\alpha_n)$, which agrees with the claimed value of $s_3(n)$ in \Cref{T:Pw}.
\end{lem}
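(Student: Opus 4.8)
The plan is to assemble the stated decomposition directly from the circle method expansion established just before the lemma, and then to reduce the sign of $c_3(n)$ to that of the leading Bessel term. Beginning from $c_3(n)=\sum_1+\sum_{3,1}+\sum_{3,2}$ and inserting \eqref{eqn:CircleMain} in place of $\sum_{3,1}$ gives
\[
c_3(n)=2\pi\sqrt{4n-1}\sum_{\substack{0\le h<k<N\\\gcd(h,k)=1\\3\mid k}}\frac{\chi_3(h,k)}{k}e^{-\frac{2\pi inh}{k}}I_1\!\left(\frac{\pi}{k}\sqrt{4n-1}\right)+E(n)+\sum_{3,2}+\sum_1 .
\]
The first assertion of the lemma is therefore equivalent to the statement that $M_3(n)$ is precisely the portion of this Bessel sum supported on multiples of $3$ below $3\ell_n$, while $E_3(n)$ is the complementary tail $k\ge 3\ell_n$. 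Since the tail is $E_3(n)$ by definition, everything reduces to identifying the small-$k$ block with $M_3(n)$, i.e. to evaluating the inner sums $\sum_h\chi_3(h,k)e^{-2\pi inh/k}$ for $k=3$ when $3\nmid n$ and for $k=3,6,9$ when $3\mid n$.

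To evaluate these inner sums I would first obtain an explicit closed form for the multiplier $\chi_3(h,k)=\omega_{h,k/3}^5\,\omega_{h,k}^{-9}$, following the template of \Cref{L:Chi}: substitute the two cases of \eqref{E:MultP} according to the parities of $h$ and $k$, simplify the Legendre symbols by reciprocity, and collapse the phase into a single exponential $e^{2\pi iA}$ with $A$ a rational quadratic form in $h$ and $h'$ reduced to an appropriate modulus. With $\chi_3$ in this form, each inner sum is a short exponential sum. In both cases the surviving modulus is exactly $k=3\gcd(n,3)$, at which $\frac1k=\frac1{3\gcd(n,3)}$ matches the prefactor of $M_3(n)$, so that $\alpha_n$ is literally the inner sum $\sum_h\chi_3\!\left(h,3\gcd(n,3)\right)e^{-2\pi inh/(3\gcd(n,3))}$. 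For $3\nmid n$ the only multiple of $3$ below $3\ell_n=6$ is $k=3$, and the two-term sum over $h\in\{1,2\}$ should collapse to $-2\sin(2\pi n/3)$. For $3\mid n$ one must instead show that the $k=3$ and $k=6$ inner sums vanish—this is exactly why $\ell_n$ is enlarged to $4$ and the main term is pushed to $k=9$—and that the six-term sum at $k=9$ produces the three-sine expression for $\alpha_n$.

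Granting this identity, the sign statement follows from the triangle inequality: under the hypothesis $|M_3(n)|>|E_3(n)|+|E(n)|+|\sum_{3,2}|+|\sum_1|$ the displayed decomposition forces $\sgn(c_3(n))=\sgn(M_3(n))$, and since $\sqrt{4n-1}$, the factor $\frac{2\pi}{3\gcd(n,3)}$, and $I_1$ of a positive argument are all positive, $\sgn(M_3(n))=\sgn(\alpha_n)$. It then remains to check that $\sgn(\alpha_n)$ reproduces the pattern of \Cref{T:Pw}. For $3\nmid n$ this is the sign of $-\sin(2\pi n/3)$, namely $-1$ for $n\equiv1\pmod{3}$ (that is $n\equiv1,4,7\pmod{9}$) and $+1$ for $n\equiv2\pmod{3}$ (that is $n\equiv2,5,8\pmod{9}$). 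For $3\mid n$ one evaluates the three-sine expression at each residue $n\equiv0,3,6\pmod{9}$; reducing the arguments modulo $9$ and using $\sin(\pi-x)=\sin x$ and $\sin(\pi+x)=-\sin x$, the three terms align and collapse to $6\sin(\pi/9)$, $-6\sin(4\pi/9)$, and $6\sin(2\pi/9)$, giving signs $+1,-1,+1$. In every residue class this agrees with \Cref{T:Pw}.

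The main obstacle is the multiplier computation. Propagating the $\eta$-multiplier $\omega_{h,k}$ through a ninth and a fifth power, and in particular verifying the exact cancellation that annihilates the $k=3$ and $k=6$ contributions when $3\mid n$, demands careful bookkeeping of the Gauss-sum phases and Legendre symbols in \eqref{E:MultP}; getting the reduced modulus and the quadratic form $A$ correct is the delicate point. Once $\chi_3(h,k)$ is written as a single clean exponential, the remaining exponential-sum evaluations and the nine sign checks are elementary.
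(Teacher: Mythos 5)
Your plan follows the paper's proof essentially verbatim: insert \eqref{eqn:CircleMain} into the circle-method decomposition, identify $M_3(n)$ with the $k=3$ term when $3\nmid n$ (respectively show the $k=3,6$ sums vanish and the $k=9$ sum gives $\alpha_n$ when $3\mid n$), and deduce the sign from the domination hypothesis together with positivity of the Bessel factor; your residue-class check of $\sgn(\alpha_n)$ is also correct. The only (immaterial) difference is that you would first recast $\chi_3$ as a single exponential in the style of \Cref{L:Chi}, whereas the paper evaluates the finite $h$-sums directly from \eqref{E:MultP} and \eqref{eqn:chi3}.
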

\begin{proof}
Recall \eqref{eqn:CircleMain}. For $3\nmid n$ the first identity is equivalent to showing that $M_3(n)$ is the term with $k=3$ of the sum in \eqref{eqn:CircleMain}, while for $3\mid n$ it is equivalent to showing that the terms with $k=3$ and $k=6$ vanish and $M_3(n)$ equals the term with $k=9$ in \eqref{eqn:CircleMain}.

We first evaluate the term coming from $k=3$. Using \eqref{E:MultP} and \eqref{eqn:chi3} of $\chi_3(h,k)$, the sum on $h$ for this term equals
	\begin{equation*}
		\sum_{h\in\{1,2\}} \chi_3(h,3) e^{-\frac{2\pi ihn}3}= -2\sin\left(\frac{2\pi n}3\right).
	\end{equation*}
This yields the first identity for $3\nmid n$, and for $n\equiv0\Pmod3$ this vanishes.

For $k=6$, the sum on $h$ becomes
	\[
	\sum_{h\in\{1,5\}} \chi_3(h,6) e^{-\frac{2\pi i hn}{6}}=- 2\sin\left(\frac{\pi n}{3}\right).
	\]
	This also vanishes if $3\mid n$.

Finally, plugging \eqref{E:MultP} into the sum on $h$ for $k=9$ and $3\mid n$ gives
	\[
	\sum_{h\in\{1,2,4,5,7,8\}} \chi_{3}(h,9) e^{-\frac{2\pi i hn}{9}}=\alpha_n.
	\]
This yields the first identity for $3\mid n$ as well.

Next suppose that $|M_3(n)|>|E_3(n)| +|E(n)|+|\sum_{3,2}| + |\sum_1|$. In this case, we have $s_3(n)=\sgn(\alpha_n)$, and one easily checks that
\[
\sgn\left(\alpha_n\right)=\begin{cases}
			1&\text{if }n\equiv 0,2,5,6,8\pmod{9},\\
			-1&\text{if }n\equiv 1,3,4,7\pmod{9},
		\end{cases}
\]
which matches the value claimed in \Cref{T:Pw}.
\end{proof}

	\subsection{Error bounds}
By \Cref{lem:1^93^{-5}almostexact}, we need to bound $|M_3(n)|$ from below and $|E_3(n)|$, $|E(n)|$, $|\sum_{3,2}|$, and $|\sum_1|$ from above. The estimate for $|E(n)|$ is given in \eqref{eqn:Enbound}.
	\subsubsection{Lower bounds for the main term}
Since the definition of $M_3(n)$ depends on $\gcd(n,3)$, we distinguish whether $3\nmid n$ or $3\mid n$.
		If $3\nmid n$, then
		\begin{equation}\label{eqn:main1^93^-5not3}
\left|M_3(n)\right|=\frac{4\pi}{3}\sin\left(\frac{2\pi}3\right) \sqrt{4n-1} I_1\left(\frac\pi{3}\sqrt{4n-1}\right).
		\end{equation}

		If $n\equiv0\Pmod3$, then
		\begin{equation}\label{eqn:main1^93^-53midn}
		\hspace{-.2cm}\left|M_3(n)\right|=\!\frac{2\pi}{9}|\alpha_n|  \sqrt{4n-1} I_1\!\left(\frac\pi{9}\sqrt{4n-1}\right)\!\geq \frac{4\pi}3\sin\left(\frac\pi9\right) \sqrt{4n-1} I_1\left(\frac\pi{9}\sqrt{4n-1}\right)\!.
		\end{equation}

	\subsubsection{Bound of tails from \eqref{eqn:CircleMain}}
Applying the bound $I_1(\frac{\pi}{k}\sqrt{4n-1})\leq I_1(\frac{\pi}{3\ell_n}\sqrt{n-1})$ uniformly for $k\geq 3\ell_n$ with $3\mid k$ and trivially bounding the sum over $h$ in $E_3(n)$ yields the following upper bound for $|E_3(n)|$.
	\begin{lem}\label{lem:E1^93^{-5}Bound}
		We have
		\[
			\left|E_3(n)\right|\leq  \frac{4\pi n}{3} I_1\left(\frac{\pi}{3\ell_n}\sqrt{4n-1}\right).
		\]
	\end{lem}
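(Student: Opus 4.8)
The plan is to estimate $|E_3(n)|$ directly by the triangle inequality, exploiting that every factor apart from the Bessel function is bounded by $1$. First I would record that the multiplier is unimodular: from \eqref{E:MultP} each $\omega_{h,k}$ is a value $\pm 1$ of the extended Legendre symbol times $e^{i\theta}$ with $\theta\in\R$, so $|\omega_{h,k}|=1$, and hence by the definition \eqref{eqn:chi3} we have $|\chi_3(h,k)|=|\omega_{h,\frac k3}|^5/|\omega_{h,k}|^9=1$. Since also $|e^{-2\pi i nh/k}|=1$, applying the triangle inequality to the definition of $E_3(n)$ yields
\[
|E_3(n)| \le 2\pi\sqrt{4n-1}\sum_{\substack{0\le h<k<N\\\gcd(h,k)=1\\3\mid k\\ k\ge 3\ell_n}} \frac1k\, I_1\!\left(\frac{\pi}{k}\sqrt{4n-1}\right).
\]

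Next I would use the monotonicity of the $I$-Bessel function. Because $I_1$ has a power series expansion with nonnegative coefficients, it is increasing on $[0,\infty)$; thus for every $k\ge 3\ell_n$ the argument satisfies $\frac{\pi}{k}\sqrt{4n-1}\le \frac{\pi}{3\ell_n}\sqrt{4n-1}$, whence
\[
I_1\!\left(\frac{\pi}{k}\sqrt{4n-1}\right)\le I_1\!\left(\frac{\pi}{3\ell_n}\sqrt{4n-1}\right).
\]
This is precisely the uniform bound quoted just before the lemma, and it lets me pull the Bessel factor out of the sum.

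Finally I would count the surviving terms and simplify. For each fixed $k$ the inner sum over $h$ has $\varphi(k)\le k$ terms, each contributing $\frac1k$, so $\sum_h\frac1k=\frac{\varphi(k)}{k}\le 1$; the outer sum runs over multiples of $3$ below $N=\lfloor\sqrt n\rfloor$, of which there are at most $\frac{N}{3}\le\frac{\sqrt n}{3}$. Hence the double sum is bounded by $\frac{\sqrt n}{3}$, giving
\[
|E_3(n)|\le 2\pi\sqrt{4n-1}\cdot\frac{\sqrt n}{3}\,I_1\!\left(\frac{\pi}{3\ell_n}\sqrt{4n-1}\right),
\]
and the elementary estimate $\sqrt{4n-1}\,\sqrt n=\sqrt{4n^2-n}\le 2n$ turns the prefactor into $\frac{4\pi n}{3}$, yielding the claim. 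I do not expect any genuine obstacle here: the argument is entirely routine, and the only points deserving explicit mention are the unimodularity of $\chi_3(h,k)$ and the monotonicity of $I_1$, both of which are immediate.
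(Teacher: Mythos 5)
Your proof is correct and follows exactly the paper's argument: the paper proves this lemma in the single sentence preceding it, by bounding $I_1(\frac{\pi}{k}\sqrt{4n-1})$ uniformly by $I_1(\frac{\pi}{3\ell_n}\sqrt{4n-1})$ for $k\geq 3\ell_n$ and trivially bounding the sum over $h$, which is precisely your estimate. You merely spell out the details the paper leaves implicit (unimodularity of $\chi_3$, monotonicity of $I_1$, the count of at most $\sqrt{n}/3$ values of $k$, and $\sqrt{4n-1}\sqrt{n}\leq 2n$), all of which are correct.
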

	\subsubsection{Bounding $\sum_1$}

We require the following bound.
\begin{lem}\label{lem:Pq5Pq39}
Assuming the notation above, we have
\[
\left|\frac{P\left(q_1^\frac{1}{3}\right)^5}{P\left(q_1\right)^9}\right|\leq 72.
\]
\end{lem}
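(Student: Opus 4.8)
The plan is to bound the numerator and denominator separately, exploiting that the partition generating function $P$ has nonnegative Taylor coefficients and a simple Euler product. First I would record the relevant size estimates on the two nomes. Writing $q_1=e^{\frac{2\pi i}{k}(h'+\frac iz)}$ and recalling that here $3\mid h'$, so that $q_1^{\frac13}=e^{\frac{2\pi i}{k}(\frac{h'}{3}+\frac{i}{3z})}$, the bound $\re(\frac1z)\ge\frac k2$ from \eqref{eqn:Farey} gives
\[
\left|q_1\right|=e^{-\frac{2\pi}{k}\re\left(\frac1z\right)}\le e^{-\pi},\qquad \left|q_1^{\frac13}\right|=e^{-\frac{2\pi}{3k}\re\left(\frac1z\right)}\le e^{-\frac\pi3}.
\]
In particular both nomes lie well inside the unit disk, which is what makes all the series and products below converge and allows me to compare them with their values at fixed real arguments.

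For the numerator I would use positivity of the coefficients: since $P(q)=\sum_{n\ge0}p(n)q^n$ with $p(n)\ge0$, for any $|w|<1$ one has $|P(w)|\le\sum_{n\ge0}p(n)|w|^n=P(|w|)$, and $P$ is increasing on $[0,1)$; hence $|P(q_1^{\frac13})|\le P(e^{-\frac\pi3})$. For the denominator I would instead use the product $P(w)^{-1}=\prod_{j\ge1}(1-w^j)$ together with $|1-w^j|\le1+|w|^j$ to obtain the lower bound $|P(w)|\ge\prod_{j\ge1}(1+|w|^j)^{-1}$; since $\prod_{j\ge1}(1+x^j)$ is increasing in $x\in[0,1)$ and $|q_1|\le e^{-\pi}$, this yields $|P(q_1)|\ge\prod_{j\ge1}(1+e^{-\pi j})^{-1}$. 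Combining the two estimates gives
\[
\left|\frac{P\left(q_1^{\frac13}\right)^5}{P\left(q_1\right)^9}\right|\le P\left(e^{-\frac\pi3}\right)^5\left(\prod_{j\ge1}\left(1+e^{-\pi j}\right)\right)^9.
\]

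It remains to bound the right-hand side by $72$, and this is the only genuine computation. Both infinite products converge rapidly, so explicit closed-form tail estimates suffice. Using $\log P(x)=\sum_{m\ge1}\frac1m\frac{x^m}{1-x^m}\le\frac{-\log(1-x)}{1-x}$ (which follows from $1-x^m\ge1-x$) gives $P(e^{-\frac\pi3})\le(1-e^{-\frac\pi3})^{-1/(1-e^{-\frac\pi3})}<1.95$, while $\log\prod_{j\ge1}(1+e^{-\pi j})\le\sum_{j\ge1}e^{-\pi j}=\frac{e^{-\pi}}{1-e^{-\pi}}$ gives $\prod_{j\ge1}(1+e^{-\pi j})<1.05$. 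Hence the right-hand side is at most $1.95^5\cdot1.05^9<42<72$. The main obstacle is thus only this final numerical step, and it is a very mild one: the true value of the product is near $35$, so there is ample slack and the crude closed-form bounds above already close the argument.
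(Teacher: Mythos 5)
Your proposal is correct, but it follows a genuinely different route from the paper. The paper treats the quotient as a single $q$-series: it writes $\frac{P(q)^5}{P(q^3)^9}=\sum_{n\ge0}\beta(n)q^n$, asserts $\beta(n)\ge0$, deduces $\bigl|\frac{P(q_1^{1/3})^5}{P(q_1)^9}\bigr|\le \frac{P(e^{-\pi/3})^5}{P(e^{-\pi})^9}\le P\left(e^{-\pi/3}\right)^5$ using $|q_1|\le e^{-\pi}$ from \eqref{eqn:Farey}, and then bounds $P\left(e^{-\pi/3}\right)^5\le 72$ by combining the explicit values $p(1),\dots,p(9)$ with the bound $p(n)<e^{\pi\sqrt{2n/3}}$ for the tail. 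You instead bound numerator and denominator separately: the numerator via coefficient positivity of $P$ itself, $|P(q_1^{1/3})|\le P\left(e^{-\pi/3}\right)$, and the denominator from below via $|1-q_1^j|\le 1+|q_1|^j$, giving $|P(q_1)|^{-9}\le\prod_{j\ge1}\left(1+e^{-\pi j}\right)^9$, with the closed-form estimates $\log P(x)\le\frac{-\log(1-x)}{1-x}$ and $\log\prod_{j\ge1}(1+x^j)\le\frac{x}{1-x}$ supplying the numerics. What each buys: your argument avoids the paper's nonnegativity claim for $\beta(n)$ (which is stated as ``not hard to see'' but does require an argument, e.g.\ via $\eta(3\tau)^3/\eta(\tau)$ having nonnegative coefficients) and avoids tabulating partition values, and it even yields a sharper constant; the paper's packaging lets the denominator be discarded for free and reduces everything to one evaluation of $P$. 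One small numerical slip: $1.95^5\cdot 1.05^9\approx 43.7$, so it is not $<42$ as written (the unrounded bounds $1.9462$ and $1.0462$ do give a product $<42$), but since either way the result is well below $72$, the conclusion stands.
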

\begin{proof}
 We define
		\begin{equation*}
			\frac{P(q)^5}{P\left(q^3\right)^9} =: \sum_{n\ge0} \beta(n) q^n.
		\end{equation*}
		It is not hard to see that $\beta(n)\ge0$. Thus

		\begin{equation*}
			\left|\frac{P\left(q_1^\frac13\right)^5}{P\left(q_1\right)^9}\right| \le \sum_{n\ge0} \beta(n) |q_1|^\frac n3.
		\end{equation*}
		By \eqref{eqn:Farey}, we have $|q_1|=e^{-\frac{2\pi}{k}\im(\frac{1}{z})}<e^{-\pi}$, and hence

		\begin{equation*}
			\left|\frac{P\left(q_1^\frac13\right)^5}{P\left(q_1\right)^9}\right| \le \sum_{n\ge0} \beta(n) e^{-\frac{\pi n}3} = \frac{P\left(e^{-\frac\pi3}\right)^5}{P\left(e^{-\pi}\right)^9} \le P\left(e^{-\frac\pi3}\right)^5.
		\end{equation*}
		We now use the well-known estimate (for example, see \cite[Theorem 14.5]{A2})
		\begin{equation}\label{E:partbound}
			p(n) < e^{\pi\sqrt{\frac{2n}3}}.
		\end{equation}
		Thus, for $n\ge10$
		\begin{equation*}
			p(n) e^{-\frac{\pi n}3} < e^{\pi\sqrt{\frac{2n}3}-\frac{\pi n}3} \le e^{-\frac{\pi n}{13.5}}.
		\end{equation*}
		Thus
		\begin{equation*}
			P\left(e^{-\frac\pi3}\right) \le 1 + \sum_{n=1}^9 p(n) e^{-\frac{\pi n}3} + \frac{e^{-\frac{10\pi}{13.5}}}{1-e^{-\frac\pi{13.5}}}.
		\end{equation*}
		Explicitly plugging in $p(n)$ for $1\leq n\leq 9$ and raising to the fifth power then yields
		\[
			P\left(e^{-\frac\pi3}\right)^5\leq 72.%
		\]
		This gives the claim.
\end{proof}

Using \eqref{eqn:Farey} and \Cref{lem:Pq5Pq39}, we overall bound (noting that $N+1> \sqrt{n}$)
		\begin{align}
\nonumber			\left|\sum{}_{\substack{\\[6pt]\hspace{-0.05cm}1}}\right| &\le 3^{\frac52}\cdot 72 \sum_{\substack{1\le k<N\\3\nmid k}} k \left(|\vartheta_{h,k}'|+|\vartheta_{h,k}''|\right) \max_{-\vartheta_{h,k}'\leq \Phi\leq \vartheta_{h,k}''} \frac1{|z|^2} e^{-\frac{11\pi}{18k}\re\left(\frac1z\right)-\frac\pi{2k}\re(z)+\frac{2\pi n\re(z)}k}\\
\nonumber			&\le 3^{\frac52}\cdot 72 \sum_{1\le k<N} \frac2{N+1} \left(\frac kn\right)^{-2} e^{-\frac{11\pi}{18\cdot2}+2\pi}
\le 3^\frac52\cdot 144 e^{-\frac{11\pi}{36}+2\pi} n^{\frac{3}{2}}\sum_{1\le k<N} \frac1{k^2}\\
\label{eqn:sum1bound}
&\le 3^\frac52\cdot 144 e^{-\frac{11\pi}{36}+2\pi} \zeta(2)n^{\frac{3}{2}}\leq 757137 n^{\frac{3}{2}}.
		\end{align}

	\subsubsection{Bounding $\sum_{3,2}$}
\begin{lem}\label{lem:fq1-1}
Assuming the notation above, we have
\[
	\left|f_3(q_1)-1\right|\left|q_1\right|^{-\frac{1}{4}} \leq \frac23.
\]
\end{lem}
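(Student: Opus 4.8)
The plan is to exploit that $f_3$ has constant term $1$ at $i\infty$ and that $q_1$ lies very deep inside the unit disc. Writing $f_3(q)=\sum_{n\ge0}c_3(n)q^n$ we have $c_3(0)=1$, so $f_3(q_1)-1=q_1\sum_{n\ge0}c_3(n+1)q_1^n$ and hence, by the triangle inequality,
\[
\left|f_3(q_1)-1\right||q_1|^{-\frac14}=|q_1|^{\frac34}\Big|\sum_{n\ge0}c_3(n+1)q_1^n\Big|\le\sum_{n\ge1}|c_3(n)|\,|q_1|^{n-\frac14}.
\]
By \eqref{eqn:Farey} we have $|q_1|=e^{-\frac{2\pi}{k}\re(\frac1z)}<e^{-\pi}$, and since $n-\tfrac14>0$ for $n\ge1$ this gives $|q_1|^{n-\frac14}<e^{-\pi(n-\frac14)}$. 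Thus it suffices to prove the single numerical inequality
\[
e^{\frac\pi4}\sum_{n\ge1}|c_3(n)|\,e^{-\pi n}\le\frac23,
\]
in which all dependence on $h,k,z$ has disappeared.

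To estimate the coefficients I would majorise them by a power series with nonnegative coefficients, exactly as in the proof of \Cref{lem:Pq5Pq39}. Since $f_3(q)=(q;q)_\infty^9P(q^3)^5$ and the pentagonal number theorem shows that each coefficient of $(q;q)_\infty$ is $0$ or $\pm1$, replacing every coefficient by its absolute value yields $|c_3(n)|\le[q^n]\big(P(q)^9P(q^3)^5\big)$; a tighter majorant, replacing $P(q)^9$ by $\prod_{j\ge1}(1+q^j)^9=P(q)^9/P(q^2)^9$, is available if needed. These majorant coefficients are nonnegative and the series has radius of convergence $1>e^{-\pi}$, so it may be evaluated termwise at $e^{-\pi}$, giving
\[
\sum_{n\ge1}|c_3(n)|\,e^{-\pi n}\le P\big(e^{-\pi}\big)^9P\big(e^{-3\pi}\big)^5-1.
\]
Each factor $P(e^{-c\pi})=\sum_{m\ge0}p(m)e^{-c\pi m}$ can be made fully explicit by summing finitely many terms and bounding the tail geometrically via $p(m)<e^{\pi\sqrt{2m/3}}$ from \eqref{E:partbound}; because $e^{-\pi}$ and $e^{-3\pi}$ are tiny this converges at once and reduces the lemma to a finite computation.

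The main obstacle is the sharpness of this last numerical step. The $n=1$ term alone contributes $|c_3(1)|e^{-\frac{3\pi}4}=9e^{-\frac{3\pi}4}$, so the crude coefficientwise-absolute majorant is too lossy to be used uniformly in $n$; instead I would keep the exact values $c_3(1)=-9$, $c_3(2)=27$, $c_3(3)=-7,\dots$ for $n$ below some cutoff $n_0$—these follow directly from the pentagonal expansion of $(q;q)_\infty^9$ together with $P(q^3)^5=1+5q^3+\cdots$—and only invoke the nonnegative majorant for the exponentially small tail $n\ge n_0$. With the first handful of coefficients treated exactly, the dominant contribution is pinned down precisely and the tail is negligible, which is what one expects to deliver the stated constant $\tfrac23$. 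Everything past the first display is then an explicit finite verification, and the only structural inputs are the deep-in-the-disc bound $|q_1|<e^{-\pi}$ from \eqref{eqn:Farey} and the nonnegativity of the majorant's coefficients.
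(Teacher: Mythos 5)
Your strategy is the same as the paper's: pull $|q_1|<e^{-\pi}$ out of \eqref{eqn:Farey}, majorize the coefficients of $f_3-1$ by the nonnegative coefficients of $P(q)^9P(q^3)^5-1$, and finish numerically with \eqref{E:partbound}, exactly as in \Cref{lem:Pq5Pq39}; the paper does precisely this and then quotes $P(e^{-\pi})^9\le 1.52$, $P(e^{-3\pi})^5\le 1.01$. Where you deviate is at the point you yourself flag: after the triangle inequality the task becomes $\sum_{n\ge1}|c_3(n)|\,|q_1|^{n-\frac14}\le\frac23$, you note that the $n=1$ term alone can reach $9e^{-\frac{3\pi}{4}}\approx 0.85>\frac23$, and you propose to rescue the constant by keeping the exact coefficients $c_3(1)=-9$, $c_3(2)=27,\dots$ below a cutoff and majorizing only the tail.

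That patch cannot close the argument, and this is the genuine gap. Keeping exact coefficients changes nothing at $n=1$: any estimate routed through $\sum_{n\ge1}|c_3(n)|\,|q_1|^{n-\frac14}$ is at least $9|q_1|^{\frac34}-O\bigl(|q_1|^{\frac74}\bigr)$, and $|q_1|$ is not bounded away from $e^{-\pi}$ here (for $k\asymp\sqrt n$ with $3\mid k$ and $\Phi$ at a Farey endpoint, $\re\bigl(\frac1z\bigr)$ comes arbitrarily close to $\frac k2$). So the ``explicit finite verification'' you defer to would have to confirm $e^{\frac{\pi}{4}}\sum_{n\ge1}|c_3(n)|e^{-\pi n}\le\frac23$, which is false: its left-hand side is about $0.97$ (already $0.85$ from $n=1$), and even the exact value at real $q_1$ with $|q_1|\to e^{-\pi}$ is $\bigl|-9e^{-\pi}+27e^{-2\pi}-\cdots\bigr|e^{\frac\pi4}\approx 0.74>\frac23$, so no amount of exact bookkeeping or phase cancellation rescues the constant $\frac23$ with the exponent $\frac14$. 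Your unease was detecting a real defect rather than a missing trick: the paper's own proof performs your crude step and then concludes from the display $e^{\frac{\pi}{24}}\bigl(P(e^{-\pi})^9P(e^{-3\pi})^5-1\bigr)<\frac23$, i.e.\ it verifies the bound with $|q_1|^{-\frac1{24}}$, whereas the statement---and its application in \Cref{lem:sum32}, where $e^{\frac{\pi}{2k}\re(1/z)}=|q_1|^{-\frac14}$ is what must be cancelled---requires $|q_1|^{-\frac14}$; with the correct factor $e^{\frac\pi4}$ the paper's numbers give about $1.17$, not $\frac23$. The harmless repair is to prove the lemma with a weaker constant, say $|f_3(q_1)-1|\,|q_1|^{-\frac14}\le e^{\frac\pi4}\bigl(P(e^{-\pi})^9P(e^{-3\pi})^5-1\bigr)<1.18$, by exactly your (and the paper's) argument, and to carry that constant through \Cref{lem:sum32} (the $131$ roughly doubles) and into \eqref{eqn:tofinishnot3} and \eqref{eqn:tofinish3midn}, where the change is absorbed without affecting the theorems.
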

\begin{proof}
	It is not hard to see that
	\begin{equation*}
		\left|\prod_{n\ge1}\frac{\left(1-q^n\right)^9}{\left(1-q^{3n}\right)^5}-1\right| \le \left|P(|q|)^9P\left(|q|^3\right)^5-1\right|.
	\end{equation*}

	Now we again use $|q_1|<e^{-\pi}$ by \eqref{eqn:Farey}. Using \eqref{E:partbound}, for $n\ge 6$ we have
	\begin{equation*}
		p(n) e^{-\pi n} < e^{\pi\sqrt{\frac{2n}3}-\pi n} \le e^{-\frac{2\pi n}{3}}.
	\end{equation*}
	Thus
	\begin{equation*}
		P\left(e^{-\pi}\right) \le 1 + \sum_{n=1}^5 p(n) e^{-\pi n} + \frac{e^{-4\pi}}{1-e^{-\frac{2\pi}{3}}}.
	\end{equation*}
	Explicitly plugging in $p(n)$ for $1\leq n\leq 5$ then yields
	\[
		1\leq P\left(e^{-\pi}\right)^9\leq 1.52.
	\]
	Similarly, \eqref{E:partbound} gives, for $n\geq 1$,
	\[
		p(n)e^{-3\pi n} < e^{\pi\sqrt{\frac{2n}3}-3\pi n}\le e^{-2\pi n}.
	\]
	Thus
	\[
		1\leq P\left(e^{-3\pi}\right)^5\leq \left(\sum_{n\geq0} e^{-2\pi n}\right)^5 = \frac{1}{\left(1-e^{-2\pi}\right)^5} \leq 1.01.
	\]
	Therefore
	\[
		e^{\frac\pi{24}}\left(P\left(e^{-\pi}\right)^9 P\left(e^{-3\pi}\right)^5-1\right) <\frac{2}{3}.
	\]
	The claim now follows.
\end{proof}
Using \Cref{lem:fq1-1}, we are now able to bound the contribution from $\sideset{}{_{3,2}}{\sum}$.

\begin{lem}\label{lem:sum32}
We have
\[
	\left| \sideset{}{_{3,2}}{\sum}\right|\leq 131n^{\frac{3}{2}}.
\]
\end{lem}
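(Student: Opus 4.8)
The plan is to bound $\left|\sum_{3,2}\right|$ by moving the absolute value inside the sum over $h,k$ and inside the integral, and then estimating each factor of the integrand separately. Since $\chi_3(h,k)$ is a root of unity and $\left|e^{-\frac{2\pi inh}{k}}\right|=1$, the only factors that contribute are $|z|^{-2}$, the exponential $\left|e^{\frac{\pi}{2k}(\frac1z-z)+\frac{2\pi nz}{k}}\right|$, and $|f_3(q_1)-1|$. Writing $z=\frac kn+ik\Phi$, so that $\re(z)=\frac kn$ is constant along the path of integration, the exponential factor equals $e^{\frac{\pi}{2k}(\re(1/z)-\re(z))+\frac{2\pi n}{k}\re(z)}$.

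The crucial step is to control $|f_3(q_1)-1|$ with \Cref{lem:fq1-1}, which gives $|f_3(q_1)-1|\le\frac23|q_1|^{\frac14}$. Since $q_1=e^{\frac{2\pi i}{k}(h'+\frac iz)}$, we have $|q_1|=e^{-\frac{2\pi}{k}\re(1/z)}$, whence $|q_1|^{\frac14}=e^{-\frac{\pi}{2k}\re(1/z)}$. This is exactly the factor needed to cancel the dangerous $e^{\frac{\pi}{2k}\re(1/z)}$ coming from the exponential; indeed the exponent $\frac14$ in \Cref{lem:fq1-1} is chosen precisely so that this cancellation is exact. After cancellation the integrand is bounded by $\frac23|z|^{-2}e^{(\frac{2\pi n}{k}-\frac{\pi}{2k})\re(z)}$, and substituting $\re(z)=\frac kn$ turns the exponent into $2\pi-\frac{\pi}{2n}<2\pi$, removing all dependence on $n$ there.

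It then remains to assemble elementary estimates. Using \eqref{eqn:Farey} I would bound $|z|^{-2}\le\frac{n^2}{k^2}$ and the length of the $\Phi$-interval by $\vartheta'_{h,k}+\vartheta''_{h,k}\le\frac{2}{k(N+1)}$, so that each integral is at most $\frac{4e^{2\pi}n^2}{3k^3(N+1)}$. Summing over the at most $\varphi(k)\le k$ admissible residues $h$, then over $k$ with $3\mid k$ and $1\le k<N$, and using $\sum_{3\mid k}k^{-2}=\frac{\zeta(2)}{9}=\frac{\pi^2}{54}$ together with $N+1>\sqrt n$, yields
\[
\left|\sum_{3,2}\right|\le\frac{4e^{2\pi}n^2}{3(N+1)}\cdot\frac{\pi^2}{54}<\frac{2\pi^2e^{2\pi}}{81}\,n^{\frac32}\le 131\,n^{\frac32},
\]
the final inequality following from the numerical value $\frac{2\pi^2e^{2\pi}}{81}\approx 130.5$.

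I expect the only genuinely delicate point to be the cancellation described in the second paragraph: on the main arc $\re(1/z)$ attains values as large as $\frac nk$ (at $\Phi=0$), so without the decay $|q_1|^{\frac14}=e^{-\frac{\pi}{2k}\re(1/z)}$ supplied by \Cref{lem:fq1-1} the factor $e^{\frac{\pi}{2k}\re(1/z)}$ would grow exponentially in $n$ and the estimate would fail. Everything else is routine bookkeeping with the Farey bounds and the convergent Euler sum recorded above.
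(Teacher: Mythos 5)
Your proposal is correct and follows essentially the same route as the paper's proof: the exact cancellation $e^{\frac{\pi}{2k}\re(1/z)}|q_1|^{\frac14}=1$ supplied by \Cref{lem:fq1-1}, the Farey bounds \eqref{eqn:Farey} for $|z|^{-2}$, $\re(z)$, and the arc length, the trivial bound on the sum over $h$, and the sum $\sum_{3\mid k}k^{-2}\le\frac{\zeta(2)}{9}$ with $N+1>\sqrt n$, arriving at the same constant $\frac{2\pi^2}{81}e^{2\pi}\le 131$. No gaps.
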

\begin{proof}
		Trivially bounding the sum on $h$ and using \Cref{lem:fq1-1}, we bound
		\begin{multline}\label{eqn:sum32bound1}
			\left| \sideset{}{_{3,2}}{\sum}\right|\leq \sum_{\substack{1\leq k<N\\ 3\mid k}} k \int_{-\vartheta_{h,k}'}^{\vartheta_{h,k}''} \frac{1}{|z|^{2}}  e^{\frac\pi{2k}\left(\re\left(\frac1z\right)-\re(z)\right)}  \left|f_3(q_1)-1\right| e^{\frac{2\pi n\re(z)}k} d\Phi\\
			\leq \frac{2}{3} \sum_{\substack{1\leq k<N\\ 3\mid k}} k\left( \left|\vartheta_{h,k}'\right|+\left|\vartheta_{h,k}''\right|\right)
			\max_{-\vartheta_{h,k}'\leq \Phi\leq \vartheta_{h,k}''}\frac{1}{|z|^{2}}  e^{\frac\pi{2k}\left(\re\left(\frac1z\right)-\re(z)\right)} \left|q_1\right|^{\frac{1}{4}}    e^{\frac{2\pi n\re(z)}k}.
\end{multline}
By \eqref{eqn:Farey}, we have $|z|^{-2}<(\frac{k}{n})^{-2}$ and $k|\vartheta_{h,k}'|\le\frac1{N+1}$.
Moreover, we have
\[
e^{\frac\pi{2k}\re\left(\frac1z\right)}|q_1|^{\frac{1}{4}}=1.
\]
Plugging back into \eqref{eqn:sum32bound1} and using $\re(z)=\frac{k}{n}$ (by \eqref{eqn:Farey}) then yields
\begin{equation*}
\left| \sideset{}{_{3,2}}{\sum}\right| \leq \frac{4n^2}{3(N+1)}e^{2\pi}\sum_{\substack{1\leq k<N\\ 3\mid k}} \frac{1}{k^2} \leq \frac{4}{27} e^{2\pi} \zeta(2) n^{\frac{3}{2}}= \frac{2\pi^2 }{81} e^{2\pi}n^{\frac{3}{2}} \leq 131 n^{\frac{3}{2}}.\qedhere
\end{equation*}
\end{proof}
	\subsection{Finishing the proof}
	\subsubsection{$3\nmid n$}
Assume that $3\nmid n$. Plugging \eqref{eqn:Enbound}, \eqref{eqn:main1^93^-5not3}, \Cref{lem:E1^93^{-5}Bound}, \eqref{eqn:sum1bound}, and \Cref{lem:sum32} into \Cref{lem:1^93^{-5}almostexact}, we conclude that $s_3(n)$ is as claimed in \Cref{T:Pw} if
	\begin{multline}\label{eqn:tofinishnot3}
		\left(\frac{4\pi}3\sin\left(\frac{2\pi}3\right)\sqrt{4n-1} I_1\left(\frac\pi3\sqrt{4n-1}\right)\right)^{-1}\\
		\times \left(\frac{4\pi n}{3} I_1\left(\frac\pi6\sqrt{4n-1}\right)+
757137 n^{\frac{3}{2}}+131 n^{\frac{3}{2}}+\frac{2294012\pi^2}{27}n^\frac32\right)<1.
\end{multline}
We first estimate the left-hand side of \eqref{eqn:tofinishnot3} against
\begin{equation}\label{eqn:tofinishnot3-2}
\hspace{-.1cm}\!\left(\frac{4\pi}3\sin\left(\frac{2\pi}3\right)\sqrt{4n-1} I_1\!\left(\frac\pi3\sqrt{4n-1}\right)\!\right)^{-1}\!\left(\frac{4\pi n}{3}I_1\left(\frac\pi6\sqrt{4n-1}\right)+1595824n^{\frac{3}{2}}\right)\!.
\end{equation}
For $n\geq 6$, we may use \Cref{L:Bessel} (2) and (3) to bound \eqref{eqn:tofinishnot3-2} from above by
\[
\frac{\sqrt3e^{-\frac\pi3\sqrt{4n-1}}}{\sqrt\pi\sin\left(\frac{2\pi}3\right)(4n-1)^\frac14}  \left(\frac{8 n  e^{\frac\pi6\sqrt{4n-1}}}{\sqrt{3}(4n-1)^\frac14}+1595824n^\frac32\right).
\]
One may check that this is less than $1$ for $n\geq 89$, and for $n<89$ we directly evaluate $s_3(n)$ with a computer, verifying \Cref{T:Pw} for $3\nmid n$.

	\subsubsection{$3\mid n$}
Next assume that $3\mid n$. Plugging \eqref{eqn:Enbound}, \eqref{eqn:main1^93^-53midn}, \Cref{lem:E1^93^{-5}Bound}, \eqref{eqn:sum1bound}, and \Cref{lem:sum32} into \Cref{lem:1^93^{-5}almostexact}, we conclude that $s_3(n)$ agrees with the value in the claim of \Cref{T:Pw} if
\begin{multline}\label{eqn:tofinish3midn}
		\left(\frac{4\pi}3\sin\left(\frac\pi9\right)\sqrt{4n-1} I_1\left(\frac\pi9\sqrt{4n-1}\right)\right)^{-1}\\
		\times \left(\frac{4\pi n}{3}I_1\left(\frac\pi{12}\sqrt{4n-1}\right)+757137n^{\frac{3}{2}}+131 n^{\frac{3}{2}}+\frac{2294012\pi^2}{27}n^\frac32\right)<1.
\end{multline}
We first bound the left-hand side of \eqref{eqn:tofinish3midn} from above by
\begin{equation}\label{eqn:tofinish3midn-2}
\left(\frac{4\pi}3\sin\left(\frac\pi9\right)\sqrt{4n-1} I_1\left(\frac\pi9\sqrt{4n-1}\right)\right)^{-1}\left(\frac{4\pi n}{3}I_1\left(\frac\pi{12}\sqrt{4n-1}\right)+1595824n^{\frac{3}{2}}\right).
\end{equation}
For $n\geq 19$, \eqref{eqn:tofinish3midn-2} may be bounded from above by
\[
\frac1{\sqrt\pi\sin\left(\frac\pi9\right)(4n-1)^\frac14} e^{-\frac\pi9\sqrt{4n-1}} \left(\frac{8\sqrt{2} n e^{\frac\pi{12}\sqrt{4n-1}}}{\sqrt{3}(4n-1)^\frac14} +1595824n^\frac32\right).
\]
 One can check that this is less than $1$ for $n\geq 1173$. Confirming \Cref{T:Pw} for $n<1173$ directly with a computer, we conclude \Cref{T:Pw}.

\appendix
\section{Further comptuational evidence for purely periodic sign changes}\label{sec:Conjectures}
For each sign pattern listed below, we list further choices of $1^{\delta_1}2^{\delta_2}\cdots m^{\delta_m}$ for which computational data indicates that $s_{1^{\delta_1}2^{\delta_2}\cdots m^{\delta_m}}(n)$ satisfies the given purely periodic sign pattern.
{\footnotesize\setlength{\tabcolsep}{2pt}
\begin{longtable}{|c|c|c|c|c|c|c|c|}
\caption{The list of conjectures.}
\\
\hline%
\tiny period/&\multicolumn{5}{|c|}{\multirow{2}{*}{case}}\\
\tiny sign pattern&\multicolumn{5}{|c|}{}\\
\hline
\multirow{2}{*}{$\begin{smallmatrix}2/\\ +-\end{smallmatrix}$}
 &$1^{2}2^{-3}3^{-3}$\vphantom{$x^{x^{x^x}}$}&
$1^{2}2^{-2}3^{4}$&
$1^{3}2^{-2}$&
$1^{3}2^{-2}3^{1}$&
$1^{3}2^{-2}3^{2}$\\
&$1^{3}2^{-2}3^{3}$
&$1^{4}2^{-4}3^{-4}$&
$1^{4}2^{-3}3^{4}$& &
\\[2pt]
\hline
\multirow{2}{*}{$\begin{smallmatrix}3/\\++-\end{smallmatrix}$}
&$1^{-1}2^{2}3^{-1}$\vphantom{$x^{x^{x^x}}$}&
$1^{-1}2^{3}3^{-3}4^{1}$&
$1^{-1}2^{3}3^{-2}$&
$1^{-1}2^{4}3^{-4}4^{2}$&
$1^{-1}2^{4}3^{-3}4^{1}$\\
&$1^{-1}2^{4}3^{-2}$&
$1^{-1}2^{5}3^{-4}4^{3}$&
$1^{-1}2^{5}3^{-3}$&
 $1^{-1}2^{5}3^{-3}4^{1}$&
$1^{-1}2^{5}3^{-3}4^{2}$
\\[2pt]
\hline
\multirow{2}{*}{$\begin{smallmatrix}3\text{\vphantom{$x^{x^x}$}}/\\+0+\end{smallmatrix}$}
 & $1^{3}2^{-2}3^{-3}$\vphantom{$x^{x^{x^x}}$}&
$1^{3}2^{-1}3^{-2}$&
$1^{4}2^{-2}3^{-3}$&
$1^{4}2^{-1}3^{-2}$&
$1^{4}3^{-3}$\\
&$2^{-1}3^{-1}4^{2}$&
$1^{4}3^{-3}4^{4}$ &
$1^{4}3^{-2}4^{1}$&
$1^{4}3^{-2}4^{2}$&
$1^{4}3^{-2}4^{3}$
\\[2pt]
\hline
$\begin{smallmatrix}3/\\+0-\end{smallmatrix}$& $2^{2}3^{-1}4^{-1}$\vphantom{$x^{x^{x^x}}$}&
$2^{3}3^{-1}$&
$1^{1}2^{-1}3^{-2}4^{1}$&
$1^{2}2^{-1}3^{-2}$&
$1^{3}3^{-2}$\\[2pt]
\hline
\multirow{3}{*}{$\begin{smallmatrix}3/ \\ +--\end{smallmatrix}$}
&\vphantom{$x^{x^{x^x}}$}$1^{1}2^{1}3^{-2}$&
$1^{1}2^{2}3^{-2}$&
$1^{1}2^{3}3^{-4}$&
$1^{2}2^{1}3^{-2}$&
$1^{2}2^{2}3^{-2}$\\
&$1^{2}2^{3}3^{-3}$&
$1^{2}2^{4}3^{-4}$&
$1^{3}2^{1}3^{-2}$&
$1^{3}2^{2}3^{-2}$&
$1^{3}2^{3}3^{-2}$\\
&$1^{3}2^{4}3^{-2}$&
$1^{4}2^{3}3^{-2}$&
$1^{4}2^{4}3^{-2}$&&
\\[2pt]
\hline
\multirow{2}{*}{$\begin{smallmatrix}4/\\  +++0\end{smallmatrix}$}&
$1^{-4}2^{10}4^{-5}$\vphantom{$x^{x^{x^x}}$}&
$1^{-3}2^{8}4^{-5}5^{-1}$&
$1^{-3}2^{8}3^{1}4^{-7}$&
$1^{-1}3^{3}4^{-1}$&
$1^{-1}3^{4}4^{-3}5^{1}$
\\
&$1^{-1}2^{1}3^{3}4^{-4}5^{-2}$&
$1^{-1}2^{1}3^{3}4^{-3}$&&&
\\[2pt]
\hline
$\begin{smallmatrix}4/\\++--\end{smallmatrix}$ &
$1^{-1}2^{3}3^{1}4^{-3}$\vphantom{$x^{x^{x^x}}$}&
$1^{-1}2^{3}3^{2}4^{-4}$&
$1^{-1}2^{4}4^{-4}$&
$1^{-1}2^{4}3^{1}4^{-3}$&
$1^{-1}2^{4}3^{2}4^{-4}$
\\[2pt]
\hline
$\begin{smallmatrix}4/\\ ++0+\end{smallmatrix}$ & $1^{-1}2^{2}4^{-2}5^{1}$\vphantom{$x^{x^{x^x}}$}&&&&
\\[2pt]
\hline
\multirow{5}{*}{$\begin{smallmatrix}4/ \\ +--+\end{smallmatrix}$} &
$1^{1}3^{-1}4^{-3}$\vphantom{$x^{x^{x^x}}$}&
$1^{1}2^{1}3^{-1}4^{-3}$&
$1^{1}2^{1}4^{-3}$&
$1^{1}2^{2}3^{-1}4^{-3}$&
$1^{1}2^{2}4^{-3}$\\
&$1^{1}2^{3}3^{-1}4^{-3}$&
$1^{1}2^{3}4^{-3}$&
$1^{1}2^{4}3^{-1}4^{-4}$&
$1^{1}2^{4}4^{-3}$&
$1^{2}3^{-1}4^{-3}$\\
&$1^{2}4^{-2}$&
$1^{2}3^{1}4^{-2}$&
$1^{2}2^{1}3^{-1}4^{-3}$&
$1^{2}2^{1}4^{-2}$&
$1^{2}2^{2}3^{-1}4^{-3}$\\
&$1^{2}2^{2}4^{-2}$&
$1^{2}2^{3}3^{-1}4^{-3}$&
$1^{2}2^{3}4^{-3}$&
$1^{2}2^{4}3^{-1}4^{-4}$&
$1^{2}2^{4}4^{-3}$\\
&$1^{3}2^{1}4^{-2}$&
$1^{3}2^{2}4^{-2}$&
$1^{3}2^{3}4^{-3}$&
$1^{3}2^{4}4^{-3}$&
\\[2pt]
\hline
$\begin{smallmatrix}4/\\+---\end{smallmatrix}$& $1^{1}3^{1}4^{-3}$\vphantom{$x^{x^{x^x}}$}&&&&
\\[2pt]
\hline
$\begin{smallmatrix}4/\\+-00\end{smallmatrix}$ &
$1^{2}2^{-1}4^{-1}$\vphantom{$x^{x^{x^x}}$}&&&&
\\[2pt]
\hline
$\begin{smallmatrix}4/\\+--0\end{smallmatrix}$ & $1^{2}3^{2}4^{-3}$\vphantom{$x^{x^{x^x}}$}&&&&
\\[2pt]
\hline
$\begin{smallmatrix}4/\\+-++\end{smallmatrix}$ &
$1^{3}2^{-1}3^{-1}4^{-2}$\vphantom{$x^{x^{x^x}}$}&
$1^{3}2^{-1}4^{-2}5^{1}$&
$1^{4}2^{-1}4^{-3}$&
$1^{4}4^{-6}$&
$1^{4}2^{1}4^{-9}$
\\[2pt]
\hline
$\begin{smallmatrix}4/\\+-0+\end{smallmatrix}$ & $1^{3}3^{-1}4^{-4}$\vphantom{$x^{x^{x^x}}$}&&&&
\\[2pt]
\hline
$\begin{smallmatrix}4/\\+-+0\end{smallmatrix}$ & $1^{4}2^{-2}4^{-1}$\vphantom{$x^{x^{x^x}}$}&&&&
\\[2pt]
\hline
\multirow{2}{*}{$\begin{smallmatrix}5/\\ +++--\end{smallmatrix}$} &
$1^{-2}2^{3}3^{5}5^{-5}$\vphantom{$x^{x^{x^x}}$}&
$1^{-2}2^{3}3^{6}5^{-7}$&
$1^{-2}2^{4}3^{3}5^{-5}$&
$1^{-2}2^{4}3^{4}5^{-5}$&
$1^{-2}2^{4}3^{5}5^{-6}$\\[2pt]
&$1^{-2}2^{4}3^{6}5^{-7}$&
$1^{-2}2^{4}3^{7}5^{-8}$&
$1^{-1}2^{1}3^{3}5^{-3}$&&
\\[2pt]
\hline
\multirow{1}{*}{$\begin{smallmatrix}5/\\++++-\end{smallmatrix}$} & $1^{-3}2^{5}3^{4}5^{-9}$\vphantom{$x^{x^{x^x}}$}&
$1^{-2}2^{3}3^{2}4^{2}5^{-4}$&
$1^{-2}2^{3}3^{3}4^{1}5^{-4}$&
$1^{-1}2^{1}4^{3}5^{-2}$&
$1^{-1}2^{1}3^{1}4^{2}5^{-2}$\\[2pt]
\hline
$\begin{smallmatrix}5/\\+++-+\end{smallmatrix}$ &
$1^{-3}2^{8}4^{-4}5^{-4}$\vphantom{$x^{x^{x^x}}$}&
$1^{-1}3^{4}5^{-2}$&
$1^{-1}2^{1}3^{3}4^{-2}5^{-5}$&&
\\[2pt]
\hline
$\begin{smallmatrix}5/\\++00+\end{smallmatrix}$ & $1^{-1}2^{4}4^{-2}5^{-3}$\vphantom{$x^{x^{x^x}}$}&&&&
\\[2pt]
\hline
\multirow{4}{*}{$\begin{smallmatrix}5/\\ ++---\end{smallmatrix}$} &
$1^{-1}2^{3}3^{1}5^{-3}$\vphantom{$x^{x^{x^x}}$}&
$1^{-1}2^{3}3^{2}5^{-3}$&
$1^{-1}2^{3}3^{3}5^{-3}$&
$1^{-1}2^{4}5^{-4}$&
$1^{-1}2^{4}3^{1}5^{-3}$\\
&$1^{-1}2^{4}3^{2}5^{-3}$&
$1^{-1}2^{4}3^{3}5^{-3}$&
$1^{-1}2^{4}3^{4}5^{-4}$&
$1^{-1}2^{5}3^{1}5^{-3}$&
$1^{-1}2^{5}3^{2}5^{-2}$\\
&$1^{-1}2^{5}3^{3}5^{-3}$&
$1^{-1}2^{5}3^{4}5^{-3}$&
$1^{-1}2^{5}3^{5}5^{-4}$&
$1^{-1}2^{6}3^{3}5^{-3}$&
$1^{-1}2^{6}3^{4}5^{-3}$\\
&$1^{-1}2^{6}3^{5}5^{-2}$&
$1^{-1}2^{6}3^{6}5^{-3}$&
$1^{-1}2^{7}3^{6}5^{-3}$&
$1^{-1}2^{7}3^{7}5^{-5}$&
\\[2pt]
\hline
$\begin{smallmatrix}5/\\++0+0\end{smallmatrix}$ &
$1^{-1}2^{2}5^{-1}$\vphantom{$x^{x^{x^x}}$}&&&&
\\[2pt]
\hline
$\begin{smallmatrix}5/\\++-00\end{smallmatrix}$ &  $1^{-1}2^{3}4^{-1}5^{-2}$\vphantom{$x^{x^{x^x}}$}&&&&
\\[2pt]
\hline
$\begin{smallmatrix}5/\\+0-0-\end{smallmatrix}$ &
$2^{1}5^{-1}$\vphantom{$x^{x^{x^x}}$}&
$1^{1}2^{-1}4^{1}5^{-2}$&&&
\\[2pt]
\hline
$\begin{smallmatrix}5/\\+-+--\end{smallmatrix}$ & $1^{1}2^{-2}4^{4}5^{-3}$\vphantom{$x^{x^{x^x}}$}&&&&
\\[2pt]
\hline
\multirow{4}{*}{$\begin{smallmatrix}5/\\ +-+-+\end{smallmatrix}$} &
$1^{1}2^{-2}5^{-8}$\vphantom{$x^{x^{x^x}}$}&
$1^{1}2^{-2}4^{1}5^{-6}$&
$1^{1}2^{-2}4^{2}5^{-5}$&
$1^{2}2^{-3}4^{2}5^{-7}$&
$1^{2}2^{-2}5^{-7}$\\
&$1^{2}2^{-2}4^{1}5^{-5}$&
$1^{2}2^{-2}4^{2}5^{-3}$&
$1^{2}2^{-2}4^{3}5^{-3}$&
$1^{3}2^{-3}4^{3}5^{-4}$&
$1^{3}2^{-2}5^{-7}$\\
&$1^{3}2^{-2}4^{1}5^{-3}$&
$1^{3}2^{-2}4^{2}5^{-3}$&
$1^{3}2^{-2}4^{3}5^{-2}$&
$1^{3}2^{-2}4^{4}5^{-2}$&
$1^{4}2^{-3}4^{4}5^{-3}$\\
&$1^{5}2^{-3}4^{5}5^{-3}$&
$1^{10}2^{-3}5^{-5}$&&&
\\[2pt]
\hline
\multirow{3}{*}{$\begin{smallmatrix}5/\\ +--+-\end{smallmatrix}$} &
$1^{1}3^{-2}5^{-6}$\vphantom{$x^{x^{x^x}}$}&
$1^{1}3^{-1}5^{-4}$&
$1^{1}3^{-1}4^{1}5^{-4}$&
$1^{1}2^{1}3^{-1}5^{-5}$&
$1^{2}3^{-2}4^{1}5^{-6}$\\
&$1^{2}3^{-1}5^{-4}$&
$1^{2}3^{-1}4^{1}5^{-3}$&
$1^{2}3^{-1}4^{2}5^{-3}$&
$1^{3}2^{1}3^{-1}5^{-4}$&
$1^{3}2^{1}5^{-3}$\\
&$1^{3}2^{2}5^{-8}$&&&&
\\[2pt]
\hline
\multirow{2}{*}{$\begin{smallmatrix}5/\\ +---+\end{smallmatrix}$} &
$1^{1}3^{1}4^{-1}5^{-4}$\vphantom{$x^{x^{x^x}}$}&
$1^{1}3^{1}5^{-3}$&
$1^{1}3^{2}5^{-3}$&
$1^{1}3^{3}4^{-1}5^{-3}$&
$1^{1}3^{3}5^{-2}$\\
&$1^{1}3^{4}5^{-3}$&
$1^{2}3^{3}5^{-3}$&&&
\\
\hline
\multirow{3}{*}{$\begin{smallmatrix}5/\\ +--++\end{smallmatrix}$} &
$1^{1}2^{1}4^{-1}5^{-3}$\vphantom{$x^{x^{x^x}}$}&
$1^{1}2^{2}4^{-2}5^{-3}$&
$1^{1}2^{2}4^{-1}5^{-3}$&
$1^{1}2^{2}5^{-5}$&
$1^{1}2^{3}4^{-1}5^{-3}$\\
&$1^{2}4^{-1}5^{-4}$&
$1^{2}5^{-3}$&
$1^{2}3^{1}5^{-4}$&
$1^{2}2^{1}5^{-5}$&
$1^{2}2^{2}4^{-1}5^{-3}$\\
&$1^{2}2^{2}5^{-2}$&
$1^{2}2^{3}5^{-5}$&&&
\\[2pt]
\hline
$\begin{smallmatrix}5/\\+-00+\end{smallmatrix}$ &
$1^{2}2^{-1}5^{-2}$\vphantom{$x^{x^{x^x}}$}&&&&
\\[2pt]
\hline
$\begin{smallmatrix}5/\\+-++-\end{smallmatrix}$ &
$1^{4}3^{-1}5^{-3}$\vphantom{$x^{x^{x^x}}$}&
$1^{4}4^{1}5^{-3}$&&&
\\[2pt]
\hline
$\begin{smallmatrix}5/\\+-+++\end{smallmatrix}$ &
$1^{3}2^{-1}5^{-2}$\vphantom{$x^{x^{x^x}}$}&
$1^{3}2^{-1}4^{1}5^{-3}$&
$1^{3}2^{-1}3^{1}5^{-3}$&
$1^{4}2^{-1}3^{1}4^{1}5^{-4}$&
$1^{4}3^{2}4^{1}5^{-5}$
\\[2pt]
\hline
$\begin{smallmatrix}5/\\+-0+0\end{smallmatrix}$ & $1^{3}5^{-1}$\vphantom{$x^{x^{x^x}}$}&&&&
\\[2pt]
\hline
\multirow{4}{*}{$\begin{smallmatrix}6/\\ +-+-++\end{smallmatrix}$} &
$1^{1}2^{-8}3^{-6}4^{7}5^{5}$\vphantom{$x^{x^{x^x}}$}&
$1^{1}2^{-8}3^{-6}4^{8}5^{5}$&
$1^{1}2^{-7}3^{-6}4^{6}5^{5}$&
$1^{2}2^{-6}3^{-8}4^{4}$&
$1^{2}2^{-6}3^{-7}4^{4}$\\
&$1^{3}2^{-7}3^{-10}4^{1}5^{1}$&
$1^{5}2^{-6}3^{-8}5^{-1}$&
$1^{5}2^{-6}3^{-7}4^{1}$&
$1^{6}2^{-7}3^{-10}4^{-1}$&
$1^{6}2^{-7}3^{-9}5^{1}$\\
&$1^{6}2^{-6}3^{-8}4^{-1}5^{2}$&
$1^{6}2^{-4}3^{-3}4^{1}5^{-1}$&
$1^{7}2^{-7}3^{-10}4^{-2}5^{3}$&
$1^{7}2^{-6}3^{-6}4^{1}5^{-1}$&
$1^{7}2^{-5}3^{-5}$\\
&$1^{8}2^{-6}3^{-7}4^{-1}5^{1}$&&&&
\\[2pt]
\hline
\multirow{2}{*}{$\begin{smallmatrix}6/\\ +-+++-\end{smallmatrix}$} &
$1^{1}2^{-5}3^{-6}4^{-3}5^{5}$\vphantom{$x^{x^{x^x}}$}&
$1^{1}2^{-5}3^{-6}4^{-2}5^{5}$&
$1^{1}2^{-5}3^{-6}4^{-1}5^{5}$&
$1^{1}2^{-4}3^{-6}4^{-3}5^{5}$&
$1^{1}2^{-4}3^{-6}4^{-2}5^{5}$\\ 
&$1^{1}2^{-4}3^{-6}4^{-1}5^{5}$&
$1^{1}2^{-3}3^{-6}4^{-2}5^{5}$&&&
\\[2pt]
\hline
$\begin{smallmatrix}6/\\ +-++++\end{smallmatrix}$ &
$1^{1}2^{-5}3^{-6}4^{2}5^{5}$\vphantom{$x^{x^{x^x}}$}&$1^{1}2^{-4}3^{-5}4^{4}$&&&
\\[2pt]
\hline
$\begin{smallmatrix}6/\\+-+-+0\end{smallmatrix}$ &
$1^{1}2^{-4}3^{-3}4^{4}$\vphantom{$x^{x^{x^x}}$}&
$1^{2}2^{-5}3^{-6}4^{2}$&
$1^{3}2^{-3}3^{-1}4^{3}$&
$1^{8}2^{-7}3^{-8}$&
$1^{9}2^{-5}3^{-3}4^{1}$
\\[2pt]
\hline
$\begin{smallmatrix}6/\\+-+0+0\end{smallmatrix}$ &
$1^{1}2^{-3}3^{-3}4^{2}$\vphantom{$x^{x^{x^x}}$}&&&&
\\[2pt]
\hline
$\begin{smallmatrix}6/\\ +-+0-+\end{smallmatrix}$ &
$1^{1}2^{-3}3^{-3}4^{5}$\vphantom{$x^{x^{x^x}}$}&
$1^{2}2^{-4}3^{-6}4^{3}$&
$1^{4}2^{-3}3^{-4}4^{2}$&
$1^{8}2^{-1}3^{-8}$&
\\[2pt]
\hline
$\begin{smallmatrix}6/\\ +-+--+\end{smallmatrix}$ &
$1^{2}2^{-3}3^{-3}4^{4}$\vphantom{$x^{x^{x^x}}$}&
$1^{2}2^{-3}3^{-2}4^{4}$&
$1^{3}2^{-3}3^{-3}4^{3}$&
$1^{4}2^{-3}3^{-3}4^{2}$&
\\[2pt]
\hline
$\begin{smallmatrix}6/\\ +-++0-\end{smallmatrix}$ & $1^{1}2^{-2}3^{-3}$\vphantom{$x^{x^{x^x}}$}&&&&
\\[2pt]
\hline
\multirow{3}{*}{$\begin{smallmatrix}6/\\ +-++--\end{smallmatrix}$} &
$1^{2}2^{-3}3^{-5}5^{2}$\vphantom{$x^{x^{x^x}}$}&
$1^{2}2^{-2}3^{-4}5^{1}$&
$1^{2}2^{-2}3^{-3}5^{1}$&
$1^{3}2^{-2}3^{-3}5^{2}$&
$1^{3}2^{-2}3^{-2}5^{2}$\\
&$1^{4}2^{1}3^{-6}$&
$1^{4}2^{1}3^{-5}$&
$1^{4}2^{1}3^{-4}$&
$1^{5}2^{1}3^{-3}4^{-1}$&
$1^{5}2^{2}3^{-6}$\\
&$1^{5}2^{2}3^{-5}$&
$1^{5}2^{2}3^{-4}$&
$1^{5}2^{2}3^{-3}$&&
\\[2pt]
\hline
$\begin{smallmatrix}6/\\+-+00-\end{smallmatrix}$ &
$1^{1}2^{-2}3^{-2}4^{1}5^{1}$\vphantom{$x^{x^{x^x}}$}&&&&
\\[2pt]
\hline
$\begin{smallmatrix}6/\\+-0+0-\end{smallmatrix}$ &
$1^{1}2^{-1}3^{-3}4^{-2}$\vphantom{$x^{x^{x^x}}$}&
$1^{1}2^{-1}3^{-2}4^{-1}5^{1}$&&&
\\[2pt]
\hline
$\begin{smallmatrix}6/\\ +--+-0\end{smallmatrix}$ &
$1^{1}3^{-3}4^{2}$\vphantom{$x^{x^{x^x}}$}&&&&
\\[2pt]
\hline
$\begin{smallmatrix}6/\\+--++-\end{smallmatrix}$ &
$1^{1}2^{4}3^{-5}$\vphantom{$x^{x^{x^x}}$}&
$1^{1}2^{4}3^{-4}$&&&
\\[2pt]
\hline
$\begin{smallmatrix}6/\\+-+0-0\end{smallmatrix}$ &
$1^{2}2^{-4}3^{-6}$\vphantom{$x^{x^{x^x}}$}&
$1^{4}2^{-3}3^{-4}4^{-1}$&&&
\\[2pt]
\hline
$\begin{smallmatrix}6/\\+-+0--\end{smallmatrix}$ &
$1^{2}2^{-3}3^{-4}5^{2}$\vphantom{$x^{x^{x^x}}$}&&&&
\\[2pt]
\hline
$\begin{smallmatrix}6/\\+-++-0\end{smallmatrix}$ &
$1^{2}2^{-2}3^{-6}4^{-1}$\vphantom{$x^{x^{x^x}}$}&
$1^{4}3^{-4}4^{-1}$&
$1^{5}2^{2}3^{-7}$&&
\\[2pt]
\hline
$\begin{smallmatrix}6/\\+--+-+\end{smallmatrix}$ &
$1^{2}2^{1}3^{-2}4^{2}$\vphantom{$x^{x^{x^x}}$}&&&&
\\[2pt]
\hline
$\begin{smallmatrix}6/\\+-+-0+\end{smallmatrix}$&
$1^{3}2^{-6}3^{-9}$&
$1^{5}2^{-5}3^{-6}5^{1}$\vphantom{$x^{x^{x^x}}$}&&&
\\[2pt]
\hline
$\begin{smallmatrix}6/\\+-+---\end{smallmatrix}$ &
$1^{3}2^{-5}3^{-7}5^{2}$&
$1^{3}2^{-4}3^{-6}4^{-1}5^{3}$&
$1^{4}2^{-3}3^{-3}5^{1}$\vphantom{$x^{x^{x^x}}$}&&
\\[2pt]
\hline
$\begin{smallmatrix}6/\\+-+-00\end{smallmatrix}$ &
$1^{4}2^{-4}3^{-4}4^{1}$\vphantom{$x^{x^{x^x}}$}&&&&
\\[2pt]
\hline
$\begin{smallmatrix}6/\\+-0+--\end{smallmatrix}$ &
$1^{4}2^{2}3^{-4}4^{1}$\vphantom{$x^{x^{x^x}}$}&&&&
\\[2pt]
\hline
$\begin{smallmatrix}6/\\+-+--0\end{smallmatrix}$ &
$1^{6}2^{-3}3^{-2}$\vphantom{$x^{x^{x^x}}$}&&&&
\\[2pt]
\hline
$\begin{smallmatrix}8/\\ +--+++--\end{smallmatrix}$ &
$1^{1}2^{4}3^{1}4^{-3}$\vphantom{$x^{x^{x^x}}$}&&&&
\\[2pt]
\hline
$\begin{smallmatrix}8/\\ +-++0-0+\end{smallmatrix}$ &
$1^{4}4^{-5}$\vphantom{$x^{x^{x^x}}$}&&&&
\\[2pt]
\hline
\multirow{2}{*}{$\begin{smallmatrix}8/\\ +-++---+\end{smallmatrix}$} &
$1^{4}4^{-4}$\vphantom{$x^{x^{x^x}}$}&
$1^{4}4^{-3}$&
$1^{4}4^{-2}$&
$1^{4}2^{1}4^{-6}$&
$1^{4}2^{1}4^{-5}$\\
&$1^{4}2^{1}4^{-4}$&
$1^{4}2^{1}4^{-3}$&&&
\\[2pt]
\hline
$\begin{smallmatrix}8/\\+-0+0-0+\end{smallmatrix}$ &
$1^{4}2^{2}4^{-10}$\vphantom{$x^{x^{x^x}}$}&&&&
\\[2pt]
\hline
\multirow{2}{*}{$\begin{smallmatrix}8/\\ +-0+--0+\end{smallmatrix}$} &
$1^{4}2^{2}4^{-9}$\vphantom{$x^{x^{x^x}}$}&
$1^{4}2^{2}4^{-8}$&
$1^{4}2^{2}4^{-7}$&
$1^{4}2^{2}4^{-6}$&
$1^{4}2^{2}4^{-5}$\\
&$1^{4}2^{2}4^{-4}$&
$1^{4}2^{2}4^{-3}$&
$1^{4}2^{2}4^{-2}$&&
\\[2pt]
\hline
$\begin{smallmatrix}8/\\+--+--++\end{smallmatrix}$ &
$1^{4}2^{3}4^{-4}$\vphantom{$x^{x^{x^x}}$}&
$1^{4}2^{3}4^{-3}$&
$1^{4}2^{4}4^{-4}$&
$1^{4}2^{4}4^{-3}$&
\\[2pt]
\hline
$\begin{smallmatrix}9/\\+-+--+--+\end{smallmatrix}$ &
$1^{9}3^{-10}$\vphantom{$x^{x^{x^x}}$}&&&&
\\[2pt]
\hline
$\begin{smallmatrix}9/\\+-+--+0-+\end{smallmatrix}$ &
$1^{9}3^{-9}$\vphantom{$x^{x^{x^x}}$}&&&&
\\[2pt]
\hline
$\begin{smallmatrix}9/\\+-+--++-+\end{smallmatrix}$ &
$1^{9}3^{-8}$\vphantom{$x^{x^{x^x}}$}&
$1^{9}3^{-7}$&
$1^{9}3^{-6}$&
$1^{9}3^{-4}$&
\\[2pt]
\hline
$\begin{smallmatrix}10/\\++-0-0+---\end{smallmatrix}$ &
$1^{-1}2^{3}4^{1}5^{-3}$\vphantom{$x^{x^{x^x}}$}&&&&
\\[2pt]
\hline
$\begin{smallmatrix}10/\\+0+-+-+-+0\end{smallmatrix}$ &
$2^{-2}3^{3}5^{-5}$\vphantom{$x^{x^{x^x}}$}&&&&
\\[2pt]
\hline
$\begin{smallmatrix}10/\\+-+-++0-0+\end{smallmatrix}$ &
$1^{1}2^{-2}5^{-5}$\vphantom{$x^{x^{x^x}}$}&&&&
\\[2pt]
\hline
$\begin{smallmatrix}10/\\+-0-++--0+\end{smallmatrix}$ &
$1^{1}2^{-1}4^{-1}5^{-5}$\vphantom{$x^{x^{x^x}}$}&&&&
\\[2pt]
\hline
$\begin{smallmatrix}10/\\+-+-+-+-++\end{smallmatrix}$ &
$1^{2}2^{-4}5^{-10}$\vphantom{$x^{x^{x^x}}$}&
$1^{3}2^{-4}4^{1}5^{-7}$&&&
\\[2pt]
\hline
$\begin{smallmatrix}10/\\+-+-+0-0-+\end{smallmatrix}$ &
$1^{2}2^{-3}5^{-10}$\vphantom{$x^{x^{x^x}}$}&&&&
\\[2pt]
\hline
$\begin{smallmatrix}10/\\+-+-+----+\end{smallmatrix}$ &
$1^{2}2^{-3}5^{-9}$\vphantom{$x^{x^{x^x}}$}&
$1^{2}2^{-3}5^{-8}$&&&
\\[2pt]
\hline
$\begin{smallmatrix}10/\\+-+-++--++\end{smallmatrix}$ &
$1^{2}2^{-2}3^{-1}5^{-5}$\vphantom{$x^{x^{x^x}}$}&&&&
\\[2pt]
\hline
$\begin{smallmatrix}10/\\+-+-++---+\end{smallmatrix}$ &
$1^{2}2^{-2}4^{1}5^{-3}$\vphantom{$x^{x^{x^x}}$}&&&&
\\[2pt]
\hline
$\begin{smallmatrix}10/\\+-+-+00-00\end{smallmatrix}$ &
$1^{2}2^{-2}4^{1}5^{-2}$\vphantom{$x^{x^{x^x}}$}&&&&
\\[2pt]
\hline
$\begin{smallmatrix}10/\\+--+0+--0-\end{smallmatrix}$ &
$1^{2}4^{1}5^{-2}$\vphantom{$x^{x^{x^x}}$}&&&&
\\[2pt]
\hline
$\begin{smallmatrix}10/\\+--+++--+-\end{smallmatrix}$ &
$1^{2}2^{1}5^{-3}$&
$1^{2}2^{1}5^{-2}$\vphantom{$x^{x^{x^x}}$}&&&
\\[2pt]
\hline
$\begin{smallmatrix}10/\\+--++0--+0\end{smallmatrix}$ &
$1^{2}2^{2}4^{-1}5^{-2}$\vphantom{$x^{x^{x^x}}$}&&&&
\\[2pt]
\hline
\multirow{2}{*}{$\begin{smallmatrix}10/\\+-+-+--+++\end{smallmatrix}$} &
$1^{3}2^{-4}3^{-2}5^{-10}$\vphantom{$x^{x^{x^x}}$}&
$1^{3}2^{-3}3^{-1}5^{-7}$&
$1^{3}2^{-2}5^{-4}$&
$1^{3}2^{-2}5^{-3}$&
\\
&$1^{4}2^{-3}3^{-1}4^{1}5^{-5}$&$1^{4}2^{-3}3^{-1}4^{1}5^{-4}$&&&
\\[2pt]
\hline
$\begin{smallmatrix}10/\\+--+-0--+0\end{smallmatrix}$ &
$1^{3}2^{2}5^{-7}$\vphantom{$x^{x^{x^x}}$}&&&&
\\[2pt]
\hline
$\begin{smallmatrix}10/\\+--+----++\end{smallmatrix}$ &
$1^{3}2^{2}5^{-6}$&
$1^{3}2^{2}5^{-5}$&
$1^{3}2^{2}5^{-4}$\vphantom{$x^{x^{x^x}}$}&&
\\[2pt]
\hline
$\begin{smallmatrix}10/\\+-+-+--++-\end{smallmatrix}$ &
$1^{4}2^{-4}3^{-3}4^{-1}5^{-10}$\vphantom{$x^{x^{x^x}}$}&&&&
\\[2pt]
\hline
$\begin{smallmatrix}10/\\+-+0+0-+-+ \end{smallmatrix}$ &
$1^{4}2^{-2}4^{1}5^{-4}$\vphantom{$x^{x^{x^x}}$}&&&&
\\[2pt]
\hline
$\begin{smallmatrix}10/\\+-++-0-++0\end{smallmatrix}$ &
$1^{4}2^{-1}5^{-4}$\vphantom{$x^{x^{x^x}}$}&&&&
\\[2pt]
\hline
$\begin{smallmatrix}10/\\+-+-+-+0+0\end{smallmatrix}$ &
$1^{5}2^{-3}4^{1}5^{-1}$\vphantom{$x^{x^{x^x}}$}&&&&
\\[2pt]
\hline
$\begin{smallmatrix}10/\\+-+--+++--\end{smallmatrix}$ &
$1^{8}2^{-1}5^{-3}$&
$1^{8}2^{-1}5^{-2}$\vphantom{$x^{x^{x^x}}$}&&&
\\[2pt]
\hline
$\begin{smallmatrix}10/\\+-+-+-++--\end{smallmatrix}$ &
$1^{9}2^{-7}3^{-5}4^{2}5^{-9}$\vphantom{$x^{x^{x^x}}$}&&&&
\\[2pt]
\hline
$\begin{smallmatrix}10/\\+-+-+-+---\end{smallmatrix}$ &
$1^{9}2^{-7}3^{-4}4^{1}5^{-10}$&
$1^{9}2^{-6}3^{-3}5^{-8}$&
$1^{10}2^{-6}3^{-3}4^{1}5^{-5}$\vphantom{$x^{x^{x^x}}$}&&
\\[2pt]
\hline
$\begin{smallmatrix}12/\\+++-++-+++++\end{smallmatrix}$ &
$1^{-3}2^{8}3^{-1}4^{-4}$\vphantom{$x^{x^{x^x}}$}&&&&
\\[2pt]
\hline
$\begin{smallmatrix}12/\\++0-0+--0+0-\end{smallmatrix}$ &
$1^{-3}2^{9}3^{1}4^{-6}$\vphantom{$x^{x^{x^x}}$}&&&&
\\[2pt]
\hline
\multirow{2}{*}{$\begin{smallmatrix} 12/\\++--++-++--+\end{smallmatrix}$} &
$1^{-3}2^{10}3^{-3}4^{-8}$\vphantom{$x^{x^{x^x}}$}&
$1^{-2}2^{7}3^{-2}4^{-6}$&
$1^{-2}2^{8}3^{-2}4^{-7}$&
$1^{-2}2^{8}3^{-2}4^{-6}$&
$1^{-2}2^{9}3^{-2}4^{-7}$\\
&$1^{-2}2^{9}3^{-1}4^{-5}5^{1}$&
$1^{-2}2^{10}3^{-2}4^{-7}$&
$1^{-2}2^{10}3^{-1}4^{-5}5^{1}$&&
\\[2pt]
\hline
$\begin{smallmatrix}12/\\++--++--+--+\end{smallmatrix}$ &
$1^{-3}2^{10}3^{-2}4^{-7}5^{1}$\vphantom{$x^{x^{x^x}}$}&&&&
\\[2pt]
\hline
$\begin{smallmatrix}12/\\++-0+0-++0+0\end{smallmatrix}$ &
$1^{-2}2^{6}3^{-2}4^{-4}$\vphantom{$x^{x^{x^x}}$}&&&&
\\[2pt]
\hline
$\begin{smallmatrix}12/\\++--++-++---\end{smallmatrix}$ &
$1^{-2}2^{7}3^{-1}4^{-5}5^{1}$\vphantom{$x^{x^{x^x}}$}&&&&
\\[2pt]
\hline
$\begin{smallmatrix}12/\\++--+--++---\end{smallmatrix}$ &
$1^{-2}2^{7}3^{-1}4^{-4}5^{1}$\vphantom{$x^{x^{x^x}}$}&
$1^{-1}2^{6}3^{-1}4^{-5}$&
$1^{-1}2^{6}3^{-1}4^{-4}$&
$1^{-1}2^{7}3^{-1}4^{-4}$&
\\[2pt]
\hline
$\begin{smallmatrix}12/\\++--++-0+--0\end{smallmatrix}$ &
$1^{-2}2^{8}3^{-2}4^{-8}$\vphantom{$x^{x^{x^x}}$}&&&&
\\[2pt]
\hline
$\begin{smallmatrix}12/\\++-++--+--+-\end{smallmatrix}$ &
$1^{-1}2^{4}3^{-2}4^{-3}5^{1}$\vphantom{$x^{x^{x^x}}$}&&&&
\\[2pt]
\hline
$\begin{smallmatrix}12/\\++-++--++-++\end{smallmatrix}$ &
$1^{-1}2^{5}3^{-3}4^{-6}$\vphantom{$x^{x^{x^x}}$}&&&&
\\[2pt]
\hline
$\begin{smallmatrix}12/\\++-++-0+-0+-\end{smallmatrix}$ &
$1^{-1}2^{5}3^{-3}4^{-3}$\vphantom{$x^{x^{x^x}}$}&&&&
\\[2pt]
\hline
$\begin{smallmatrix}12/\\++--+--+++--\end{smallmatrix}$ &
$1^{-1}2^{6}3^{-2}4^{-5}5^{-2}$\vphantom{$x^{x^{x^x}}$}&&&&
\\[2pt]
\hline
$\begin{smallmatrix}12/\\++--+--++--+\end{smallmatrix}$ &
$1^{-1}2^{8}3^{-4}4^{-9}5^{-3}$\vphantom{$x^{x^{x^x}}$}&
$1^{-1}2^{8}3^{-3}4^{-8}5^{-2}$&
$1^{-1}2^{8}3^{-3}4^{-7}5^{-1}$&&
\\[2pt]
\hline
$\begin{smallmatrix}12/\\+--++-++-0+-\end{smallmatrix}$ &
$1^{1}3^{-3}4^{-4}$\vphantom{$x^{x^{x^x}}$}&&&&
\\[2pt]
\hline
\multirow{4}{*}{$\begin{smallmatrix}12/\\+--+--++-++-\end{smallmatrix}$} &
$1^{1}3^{-2}4^{-1}5^{3}$\vphantom{$x^{x^{x^x}}$}&
$1^{2}3^{-4}4^{-5}$&
$1^{2}3^{-4}4^{-4}5^{1}$&
$1^{2}3^{-3}4^{-3}5^{2}$&
$1^{2}3^{-2}4^{-2}5^{3}$\\
&$1^{2}2^{1}3^{-4}4^{-4}$&
$1^{2}2^{1}3^{-3}4^{-4}$&
$1^{2}2^{2}3^{-3}4^{-3}$&
$1^{3}2^{1}3^{-4}4^{-5}$&
$1^{3}2^{2}3^{-4}4^{-4}$\\
&$1^{3}2^{2}3^{-3}4^{-4}$&
$1^{3}2^{3}3^{-3}4^{-3}$&
$1^{3}2^{3}3^{-2}4^{-3}$&
$1^{4}2^{3}3^{-3}4^{-4}$&
$1^{4}2^{4}3^{-3}4^{-3}$\\
&$1^{4}2^{4}3^{-2}4^{-3}$&&&&
\\[2pt]
\hline
\multirow{2}{*}{$\begin{smallmatrix}12/\\+--++-++--++\end{smallmatrix}$} &
$1^{1}2^{1}3^{-3}4^{-6}$\vphantom{$x^{x^{x^x}}$}&
$1^{1}2^{1}3^{-3}4^{-5}$&
$1^{1}2^{2}3^{-3}4^{-5}$&
$1^{2}2^{2}3^{-2}4^{-4}$&
$1^{2}2^{3}3^{-2}4^{-5}$\\
&$1^{2}2^{3}3^{-2}4^{-4}$&
$1^{2}2^{4}3^{-2}4^{-4}$&
$1^{2}2^{5}3^{-2}4^{-4}$&&
\\[2pt]
\hline
\multirow{3}{*}{$\begin{smallmatrix} 12/\\+--++-++-+--\end{smallmatrix}$} &
$1^{1}2^{1}3^{-3}4^{-4}5^{-1}$\vphantom{$x^{x^{x^x}}$}&
$1^{1}2^{2}3^{-4}4^{-4}5^{-2}$&
$1^{1}2^{2}3^{-3}4^{-4}5^{-2}$&
$1^{1}2^{2}3^{-3}4^{-3}5^{-1}$&
\\
&$1^{1}2^{2}3^{-2}4^{-3}5^{-1}$&$1^{1}2^{2}3^{-2}4^{-2}$&
$1^{2}2^{3}3^{-4}4^{-5}5^{-3}$&
$1^{2}2^{3}3^{-3}4^{-4}5^{-2}$&
\\
&$1^{2}2^{3}3^{-2}4^{-3}5^{-1}$&$1^{2}2^{4}3^{-5}4^{-7}5^{-6}$&&&
\\[2pt]
\hline
$\begin{smallmatrix}12/\\+--+0-++-+0-\end{smallmatrix}$ &
$1^{1}2^{1}3^{-3}4^{-3}$&
$1^{1}2^{1}3^{-2}4^{-2}5^{1}$&
$1^{1}2^{2}3^{-3}4^{-2}$\vphantom{$x^{x^{x^x}}$}&&
\\[2pt]
\hline
$\begin{smallmatrix}12/\\+--++--+--++\end{smallmatrix}$ &
$1^{1}2^{3}3^{-3}4^{-7}$&
$1^{1}2^{4}3^{-3}4^{-6}$&
$1^{1}2^{4}3^{-2}4^{-5}5^{1}$&
$1^{2}2^{5}3^{-2}4^{-6}$\vphantom{$x^{x^{x^x}}$}&
\\[2pt]
\hline
\multirow{2}{*}{$\begin{smallmatrix}12/\\+--+--+--++-\end{smallmatrix}$} &
$1^{2}3^{-4}4^{-3}5^{2}$\vphantom{$x^{x^{x^x}}$}&
$1^{2}3^{-3}4^{-2}5^{3}$&
$1^{2}2^{1}3^{-4}4^{-3}5^{1}$&
$1^{3}2^{1}3^{-5}4^{-5}$&
\\
&$1^{3}2^{1}3^{-4}4^{-4}5^{1}$&$1^{3}2^{2}3^{-3}4^{-2}5^{2}$&$1^{4}2^{3}3^{-4}4^{-4}$&&
\\[2pt]
\hline
$\begin{smallmatrix}12/\\+--+--++---+\end{smallmatrix}$ &
$1^{2}3^{-3}4^{-4}5^{-1}$&
$1^{3}2^{1}3^{-2}4^{-3}5^{-1}$&
$1^{3}2^{2}3^{-2}4^{-3}5^{-1}$\vphantom{$x^{x^{x^x}}$}&&
\\[2pt]
\hline
$\begin{smallmatrix}12/\\+--+--++--++\end{smallmatrix}$ &
$1^{2}2^{1}3^{-4}4^{-6}5^{-2}$&
$1^{3}2^{2}3^{-6}4^{-9}5^{-5}$&
$1^{3}2^{3}3^{-3}4^{-5}5^{-2}$\vphantom{$x^{x^{x^x}}$}&&
\\[2pt]
\hline
$\begin{smallmatrix}12/\\+--++-++---+\end{smallmatrix}$ &
$1^{2}2^{1}3^{-2}4^{-4}$\vphantom{$x^{x^{x^x}}$}&&&&
\\[2pt]
\hline
$\begin{smallmatrix}12/\\+--++-++--+-\end{smallmatrix}$ &
$1^{2}2^{3}3^{-6}4^{-9}5^{-4}$\vphantom{$x^{x^{x^x}}$}&&&&
\\[2pt]
\hline
\multirow{2}{*}{$\begin{smallmatrix}12/\\+-++--+--+-+\end{smallmatrix}$} &
$1^{3}2^{-1}3^{-3}4^{-1}$\vphantom{$x^{x^{x^x}}$}&
$1^{3}2^{-1}3^{-2}4^{-1}$&
$1^{4}2^{-1}3^{-3}4^{-2}$&
$1^{4}2^{-1}3^{-2}4^{-1}5^{1}$&\\
&$1^{4}3^{-4}4^{-4}5^{-3}$
&$1^{4}3^{-4}4^{-3}5^{-2}$&
$1^{4}3^{-3}4^{-3}5^{-2}$&
$1^{4}3^{-3}4^{-2}5^{-1}$&
\\[2pt]
\hline
$\begin{smallmatrix}12/\\+-0+0-0+0-0+\end{smallmatrix}$ &
$1^{3}3^{-1}4^{-3}$\vphantom{$x^{x^{x^x}}$}&&&&
\\[2pt]
\hline
$\begin{smallmatrix}12/\\+--+--+--+0-\end{smallmatrix}$ &
$1^{3}2^{2}3^{-5}4^{-4}$&
$1^{4}2^{4}3^{-4}4^{-3}$\vphantom{$x^{x^{x^x}}$}&&&
\\[2pt]
\hline
$\begin{smallmatrix}12/\\+--+--++--+-\end{smallmatrix}$ &
$1^{3}2^{3}3^{-4}4^{-6}5^{-2}$\vphantom{$x^{x^{x^x}}$}&&&&
\\[2pt]
\hline
\multirow{2}{*}{$\begin{smallmatrix}12/\\+-++--+--++-\end{smallmatrix}$} &
$1^{4}2^{-1}3^{-3}4^{-3}5^{1}$\vphantom{$x^{x^{x^x}}$}&
$1^{4}2^{-1}3^{-3}4^{-3}5^{2}$&
$1^{5}3^{-5}4^{-6}5^{-1}$&
$1^{6}2^{2}3^{-4}4^{-5}5^{-2}$&\\
&$1^{6}2^{2}3^{-4}4^{-5}5^{-1}$&&&&
\\[2pt]
\hline
$\begin{smallmatrix}12/\\+-++-0+--+-0\end{smallmatrix}$ &
$1^{4}2^{-1}3^{-4}4^{-2}$\vphantom{$x^{x^{x^x}}$}&&&&
\\[2pt]
\hline
$\begin{smallmatrix}12/\\+-++--+--+--\end{smallmatrix}$ &
$1^{5}3^{-3}4^{-2}$\vphantom{$x^{x^{x^x}}$}&&&&
\\[2pt]
\hline
\multirow{2}{*}{$\begin{smallmatrix}12/\\+-++-++--+--\end{smallmatrix}$} &
$1^{5}3^{-4}4^{-2}$\vphantom{$x^{x^{x^x}}$}&
$1^{6}3^{-4}4^{-3}$&
$1^{6}3^{-3}4^{-2}5^{1}$&
$1^{6}2^{1}3^{-4}4^{-2}$&
$1^{6}2^{1}3^{-3}4^{-2}$\\
&$1^{7}3^{-4}4^{-4}$&
$1^{7}2^{1}3^{-4}4^{-3}$&
$1^{7}2^{1}3^{-3}4^{-3}$&
$1^{8}2^{2}3^{-3}4^{-3}$&
\\[2pt]
\hline
\multirow{2}{*}{$\begin{smallmatrix}12/\\+-++-++-++--\end{smallmatrix}$} &
$1^{6}2^{-2}3^{-6}4^{-5}$\vphantom{$x^{x^{x^x}}$}&
$1^{6}2^{-2}3^{-5}4^{-4}5^{1}$&
$1^{6}2^{-2}3^{-4}4^{-3}5^{2}$&
$1^{6}2^{-1}3^{-5}4^{-3}5^{1}$&
\\
&$1^{6}2^{-1}3^{-4}4^{-2}5^{2}$
&$1^{7}3^{-5}4^{-4}$&
$1^{7}2^{1}3^{-5}4^{-3}$&&
\\[2pt]
\hline
$\begin{smallmatrix}12/\\+-++-++--+-+\end{smallmatrix}$ &
$1^{6}2^{1}3^{-3}4^{-3}5^{-1}$\vphantom{$x^{x^{x^x}}$}&&&&
\\[2pt]
\hline
$\begin{smallmatrix}12/\\+-+--++-++--\end{smallmatrix}$ &
$1^{7}2^{-3}3^{-4}4^{-3}5^{1}$\vphantom{$x^{x^{x^x}}$}&&&&
\\[2pt]
\hline
$\begin{smallmatrix}12/\\+-+--++-+++-\end{smallmatrix}$ &
$1^{7}2^{-3}3^{-3}4^{-4}$&
$1^{7}2^{-3}3^{-3}4^{-3}$&
$1^{7}2^{-3}3^{-3}4^{-2}$\vphantom{$x^{x^{x^x}}$}&&
\\[2pt]
\hline
$\begin{smallmatrix}12/\\+-+--++-++-+\end{smallmatrix}$ &
$1^{8}2^{-3}3^{-4}4^{-2}$\vphantom{$x^{x^{x^x}}$}&&&&
\\[2pt]
\hline
\multirow{2}{*}{$\begin{smallmatrix}12/\\+-+--+--++-+\end{smallmatrix}$} &
$1^{8}2^{-3}3^{-5}4^{-2}$\vphantom{$x^{x^{x^x}}$}&
$1^{9}2^{-3}3^{-5}4^{-3}$&
$1^{9}2^{-3}3^{-4}4^{-2}5^{1}$&
$1^{9}2^{-2}3^{-5}4^{-2}$&
$1^{9}2^{-2}3^{-4}4^{-2}$\\
&$1^{10}2^{-2}3^{-5}4^{-3}$&
$1^{10}2^{-2}3^{-4}4^{-2}5^{1}$&
$1^{10}3^{-5}4^{-3}5^{-2}$&&
\\[2pt]
\hline
$\begin{smallmatrix}12/\\+-+-+++-+0+-\end{smallmatrix}$ &
$1^{9}2^{-4}3^{-3}4^{-4}$\vphantom{$x^{x^{x^x}}$}&&&&
\\[2pt]
\hline
$\begin{smallmatrix}12/\\+-+-+++-+-+-\end{smallmatrix}$ &
$1^{9}2^{-4}3^{-3}4^{-3}$&
$1^{9}2^{-4}3^{-3}4^{-2}$\vphantom{$x^{x^{x^x}}$}&&&
\\[2pt]
\hline
$\begin{smallmatrix}12/\\+-+--+--+--+\end{smallmatrix}$ &
$1^{10}2^{-4}3^{-6}4^{-5}$&
$1^{10}2^{-3}3^{-6}4^{-4}$&
$1^{10}2^{-2}3^{-6}4^{-3}$&
$1^{10}2^{-2}3^{-5}4^{-2}5^{1}$\vphantom{$x^{x^{x^x}}$}&
\\[2pt]
\hline
$\begin{smallmatrix}15/\\+0-+0-+--+-0+-0\end{smallmatrix}$ &
$2^{3}3^{-2}5^{-3}$\vphantom{$x^{x^{x^x}}$}&&&&
\\[2pt]
\hline
$\begin{smallmatrix}15/\\+--+-0+-0+-0+-0\end{smallmatrix}$ &
$1^{1}3^{-3}5^{-2}$\vphantom{$x^{x^{x^x}}$}&&&&
\\[2pt]
\hline
\end{longtable}
	
\end{document}